\begin{document}
\newcommand\Mand{\ \text{and}\ }
\newcommand\Mfor{\ \text{for}\ }
\newcommand\Real{\mathbb{R}}
\newcommand\RR{\mathbb{R}}
\newcommand\sphere{\mathbb{S}}
\newcommand\codim{\operatorname{codim}}
\newcommand\Sym{\operatorname{Sym}}
\newcommand\End{\operatorname{End}}
\newcommand\ep{\epsilon}
\newcommand\Cinf{C^\infty}
\newcommand\dCinf{\dot C^\infty}
\newcommand\CI{C^\infty}
\newcommand\dCI{\dot C^\infty}
\newcommand\Cx{\mathbb{C}}
\newcommand\Nat{\mathbb{N}}
\newcommand\dist{C^{-\infty}}
\newcommand\ddist{\dot C^{-\infty}}
\newcommand\pa{\partial}
\newcommand\Card{\mathrm{Card}}
\renewcommand\Box{{\square}}
\newcommand\WF{\mathrm{WF}}
\newcommand\WFb{\mathrm{WF}_\bl}
\newcommand\Vf{\mathcal{V}}
\newcommand\Vb{\mathcal{V}_\bl}
\newcommand\Hom{\mathrm{Hom}}
\newcommand\Id{\mathrm{Id}}
\newcommand\sgn{\operatorname{sgn}}
\newcommand\ff{\mathrm{ff}}
\newcommand\supp{\operatorname{supp}}
\newcommand\vol{\mathrm{vol}}
\newcommand\Diff{\mathrm{Diff}}
\newcommand\Diffd{\mathrm{Diff}_{\dagger}}
\newcommand\Diffs{\mathrm{Diff}_{\sharp}}
\newcommand\Diffb{\mathrm{Diff}_\bl}
\newcommand\Psib{\Psi_\bl}
\newcommand\Psibc{\Psi_{\mathrm{bc}}}
\newcommand\Tb{{}^{\bl} T}
\newcommand\Sb{{}^{\bl} S}
\newcommand\dom{\mathcal{D}}
\newcommand\cA{\mathcal{A}}
\newcommand\cE{\mathcal{E}}
\newcommand\cG{\mathcal{G}}
\newcommand\cH{\mathcal{H}}
\newcommand\cU{\mathcal{U}}
\newcommand\cO{\mathcal{O}}
\newcommand\cF{\mathcal{F}}
\newcommand\cM{\mathcal{M}}
\newcommand\cQ{\mathcal{Q}}
\newcommand\cR{\mathcal{R}}
\newcommand\cI{\mathcal{I}}
\newcommand\cL{\mathcal{L}}
\newcommand\Ptil{\tilde P}
\newcommand\ptil{\tilde p}
\newcommand\yt{\tilde y}
\newcommand\zetat{\tilde \zeta}
\newcommand\xit{\tilde \xi}
\newcommand\sigmah{\hat\sigma}
\newcommand\zetah{\hat\zeta}
\newcommand\loc{\mathrm{loc}}
\newcommand\compl{\mathrm{comp}}
\newcommand\reg{\mathrm{reg}}
\newcommand\GBB{\textsf{GBB}}
\newcommand\GBBsp{\textsf{GBB}\ }
\newcommand\bl{{\mathrm b}}
\newcommand{\sH}{\mathsf{H}}
\newcommand{\cte}{\digamma}
\newcommand\cl{\operatorname{cl}}
\newcommand\hsf{\mathcal{S}}

\setcounter{secnumdepth}{3}
\newtheorem{lemma}{Lemma}[section]
\newtheorem{prop}[lemma]{Proposition}
\newtheorem{thm}[lemma]{Theorem}
\newtheorem{cor}[lemma]{Corollary}
\newtheorem{result}[lemma]{Result}
\newtheorem*{thm*}{Theorem}
\newtheorem*{prop*}{Proposition}
\newtheorem*{cor*}{Corollary}
\newtheorem*{conj*}{Conjecture}
\numberwithin{equation}{section}
\theoremstyle{remark}
\newtheorem{rem}[lemma]{Remark}
\newtheorem*{rem*}{Remark}
\theoremstyle{definition}
\newtheorem{Def}[lemma]{Definition}
\newtheorem*{Def*}{Definition}

\title{Diffraction at corners for the wave equation on differential forms}
\author{Andr\'as Vasy}
\date{June 15, 2009}
\address{Department of Mathematics, Stanford University, CA 94305, USA}
\email{andras@math.stanford.edu}
\subjclass{58J47, 35L20}
\thanks{This work is partially supported by NSF grant DMS-0733485 and
DMS-0801226, and
a Chambers Fellowship from Stanford University.}

\begin{abstract}
In this paper we prove the propagation of singularities for the
wave equation on differential forms with natural (i.e.\ relative
or absolute) boundary conditions
on Lorentzian manifolds with corners,
which in particular includes a formulation of
Maxwell's equations.
These results are analogous to those obtained
by the author for the scalar wave equation \cite{Vasy:Propagation-Wave}
and for the wave equation on systems with Dirichlet or Neumann
boundary conditions in \cite{Vasy:Diffraction-edges}.
The main novelty is thus the presence of natural
boundary conditions, which effectively make the problem non-scalar,
even `to leading order',
at corners of codimension $\geq 2$.
\end{abstract}

\maketitle

\section{Introduction}
Let $X$ be a $\CI$ manifold with corners of dimension $n$ (a notion we
recall below),
and suppose that $h$ is a
Lorentz metric on $X$ of signature $(1,n-1)$ with dual metric $H$.
Thus, for $p\in X$, $H$ gives a non-degenerate symmetric bilinear form on
$T_p^*X$, which however is not positive definite. Then $H$ induces
a symmetric bilinear form on the real form bundle as usual (again, not
positive definite), and thus a Hermitian symmetric bilinear
form on the complex form bundle
which we write as $(.,.)_H$, or simply $(.,.)$.
Moreover, $h$ gives rise to a non-vanishing
density $|dh|$ on $X$; non-vanishing is the consequence of nondegeneracy.
In particular, for smooth forms, one of which has compact support,
one has a pairing
$$
\langle \alpha,\beta\rangle_H=\int_X (\alpha,\beta)_H \,|dh|.
$$

We recall here that a {\em tied (or t-)
manifold with corners} $X$ of dimension $n$ is a paracompact Hausdorff
topological space with a $\Cinf$ structure with corners, i.e.\ such
that the local coordinate charts map into
$[0,\infty)^k\times\Real^{n-k}$ rather than into $\Real^n$. Here $k$ varies
with the coordinate chart. We write $\pa_\ell X$ for the set of
points $p\in X$ such that in any local coordinates $\phi
=(\phi_1,\ldots,\phi_k,\phi_{k+1},\ldots,\phi_n)$ near $p$, with $k$ as above,
precisely
$\ell$ of the first $k$ coordinate functions vanish at $\phi(p)$.
We usually write such local coordinates as
$(x_1,\ldots,x_k,y_1,\ldots,y_{n-k})$.
A {\em boundary face} $F$
of codimension $\ell$ is the closure of a connected component
of $\pa_\ell X$. We write $F_{\reg}$ for the interior of $F$; note that
$F$ is a manifold with corners, while $F_{\reg}$ is a manifold
without boundary.
A boundary face of codimension $1$ is called a {\em boundary
hypersurface}. 
A {\em manifold with corners} is a tied manifold
with corners such that all boundary hypersurfaces are embedded submanifolds
-- as our results are local, this distinction is inessential.

{\em Throughout this paper we assume that every proper boundary
face $F$ of $X$ (i.e.\ all boundary faces but $X$ itself)
is time-like, in the sense that $H$ restricted to the
conormal bundle $N^*F$ of $F$ is negative definite.}

For $\CI$ differential forms on $X$, i.e.\ elements of $\CI(X;\Lambda X)$,
natural boundary conditions at the
boundary hypersurfaces $\hsf\in\pa_1(X)$ with non-vanishing
conormal $\nu_\hsf$ are $\nu_\hsf\wedge u=0$
(relative boundary condition) and $\iota_{\nu_\hsf} u=0$
(absolute boundary condition), and we write
\begin{equation*}\begin{split}
\CI_R(X;\Lambda X)=\{u\in\CI(X;\Lambda X):\ \forall \hsf\in\pa_1(X),
\ \nu_\hsf\wedge u|_\hsf=0\},\\
\CI_A(X;\Lambda X)=\{u\in\CI(X;\Lambda X):\ \forall \hsf\in\pa_1(X),
\ \iota_{\nu_\hsf} u|_\hsf=0\}.
\end{split}\end{equation*}
For $s\geq 0$ integer we let $H^s(X;\Lambda X)$ be the completion of
$\CI(X;\Lambda X)$ in the $H^s(X;\Lambda X)$ norm (defined up to
equivalence of norms on compact sets). The restriction map
$\CI(X;\Lambda X)\to\CI(\hsf;\Lambda_{\hsf} X)$ extends by continuity
to $H^1(X;\Lambda X)\to H^{1/2}(\hsf;\Lambda_\hsf X)$, and we let
\begin{equation*}\begin{split}
H^1_R(X;\Lambda X)=\{u\in H^1(X;\Lambda X):\ \forall \hsf\in\pa_1(X),
\ \nu_\hsf\wedge u=0\},\\
H^1_A(X;\Lambda X)=\{u\in H^1(X;\Lambda X):\ \forall \hsf\in\pa_1(X),
\ \iota_{\nu_\hsf} u=0\};
\end{split}\end{equation*}
these Sobolev spaces are also the closure of $\CI_R(X;\Lambda X)$,
resp.\ $\CI_A(X;\Lambda X)$, in $H^1(X;\Lambda X)$.

We consider the
wave equation $\Box u=f$ on $X$, where $\Box=\Box_h$ is the
d'Alembertian of $h$, with natural boundary conditions. That is,
for relative boundary conditions,
\begin{equation}\label{eq:Box-as-op}
\Box:H^1_{R,\loc}(X;\Lambda X)\to (H^1_{R,\compl}(X;\Lambda X))^*,
\end{equation}
and
for $u\in H^1_{R,\loc}(X;\Lambda X)$, $\Box u$ is given by
\begin{equation}\label{eq:Box-def}
\langle \Box u,v\rangle=\langle du,dv\rangle+\langle \delta u, \delta v\rangle,
\ v\in H^1_{R,\compl}(X;\Lambda X).
\end{equation}
That this is indeed the `right' boundary condition, e.g.\ in the
sense that if $X=M\times\RR_t$, $M$ Riemannian with metric $g$, $h=dt^2-g$,
and one considers the Laplacian on $M$ with relative boundary conditions,
and then the functional analytic solutions of $\Box u=f$ with this
boundary condition, follows from \cite{Mitrea-Taylor-Vasy:Lipschitz}, namely
that the quadratic form domain of $\Delta$ is $H^1_R(M;\Lambda M)$ in this
case -- see \cite[Section~2]{Vasy:Diffraction-edges} for more details.

The Hodge star $*$ maps $H^1_R(X;\Lambda X)$ to $H^1_A(X;\Lambda X)$
and conversely, and it intertwines solutions of $\Box u=f$ with relative
and absolute boundary conditions (of course, one has to take $*f$ as
well). Thus, it suffices to study relative boundary conditions, which
is how we proceed throughout this paper.

We recall that the analysis of singularities of solutions of the wave
equation takes place on the b-cotangent
bundle $\Tb^*X$ or on the b-cosphere bundle $\Sb^*X$.
Smooth sections of $\Tb^*X$ have the form
\begin{equation}\label{eq:b-sections}
\sum_{j=1}^k \sigma_j(x,y)\,\frac{dx_j}{x_j}+\sum_{j=1}^{n-k}
\zeta_j(x,y)\,dy_j,
\end{equation}
with $\sigma_j$ and $\zeta_j$ being $\CI$,
and correspondingly $(x,y,\sigma,\zeta)$ are local coordinates on $\Tb^*X$.
Let $o$ denote the zero section of $\Tb^*X$ (as well as other related
vector bundles below). Then $\Tb^*X\setminus o$ is equipped with
an $\Real^+$-action (fiberwise multiplication) which has no fixed
points. It is often natural to take the
quotient with the $\Real^+$-action, and work on the b-cosphere bundle,
$\Sb^*X$. In a region where, say,
\begin{equation}\label{eq:zeta-large}
|\sigma_j|<C|\zeta_{n-k}|,\ j=1,\ldots,k,\ |\zeta_j|<C|\zeta_{n-k}|,
j=1,\ldots,n-k-1,
\end{equation}
$C>0$ fixed, we can take
\begin{equation*}\begin{split}
&x_1,\ldots,x_k,y_1,\ldots,y_{n-k},\sigmah_1,\ldots,\sigmah_k,\zetah_1,
\ldots,\zetah_{n-k-1},|\zeta_{n-k}|,\\
&\sigmah_j=\frac{\sigma_j}{|\zeta_{n-k}|},
\ \zetah_j=\frac{\zeta_j}{|\zeta_{n-k}|},
\end{split}\end{equation*}
as (projective) local coordinates on $\Tb^*X\setminus o$, hence
$$
x_1,\ldots,x_k,y_1,\ldots,y_{n-k},\sigmah_1,\ldots,\sigmah_k,\zetah_1,
\ldots,\zetah_{n-k-1}
$$
as local coordinates on the image of this region under the quotient
map in $\Sb^*X$.

A somewhat different perspective is gained by considering the
dual bundle, $\Tb X$, of $\Tb^* X$. Locally its smooth sections have the
form
\begin{equation}\label{eq:b-vf}
\sum_{j=1}^k a_j (x_j\pa_{x_j})+\sum_{j=1}^{n-k} b_j \pa_{y_j},
\end{equation}
with $a_j,b_j\in\CI(X)$, corresponding to \eqref{eq:b-sections}.
Thus, these are exactly the $\CI$ vector fields on $X$ which are tangent
to every boundary face: they annihilate $x_j$ at $x_j=0$. The space
of these vector fields is denoted $\Vb(X)$, and the corresponding
differential operator algebra (locally finite sum of finite products
of elements of $\Vb(X)$) is $\Diffb(X)$.

The principal symbol of $\Box\in\Diff^2(X;\Lambda X)$ is $p\,\Id$, where
$p$ is the dual
metric function of $H$ on $T^*X$ (so $p(\alpha)=H(\alpha,\alpha)$),
and we denote the characteristic set of
$\Box$ by
$$
\Sigma=p^{-1}(\{0\})=\{q\in T^*X\setminus o:\ p(q)=0\}.
$$
We denote the Hamilton vector field of $p$ (on $T^*X$) by $\sH_p$.
There is a natural map $\pi:T^*X\to\Tb^*X$ induced by the corresponding
map between sections
$$
\sum_{j=1}^k \xi_j\, dx_j+\sum_{j=1}^{n-k} \zeta_j\, dy_j=\sum_{j=1}^k (x_j\xi_j)\,\frac{dx_j}{x_j}
+\sum_{j=1}^{n-k} \zeta_j\,dy_j,
$$
thus
\begin{equation}\label{eq:pi-in-coords}
\pi(x,y,\xi,\zeta)=(x,y,x\xi,\zeta),\ x\xi=(x_1\xi_1,\ldots,x_k\xi_k).
\end{equation}
We denote the image of
$\Sigma$ under $\pi$ by
$$
\dot\Sigma=\pi(\Sigma),
$$
called the compressed characteristic set.
As we show below in Section~\ref{sec:idea},
our assumptions on the time-like nature of every boundary
face $F$ imply that a neighborhood of $\dot\Sigma$ is covered by coordinate
charts as in \eqref{eq:zeta-large}, if the $\zeta_j$ are appropriately
numbered.
We next define generalized
broken bicharacteristics.

\begin{Def}\label{def:gen-br-bich}
{\em Generalized broken bicharacteristics,} or \GBB, are
continuous maps $\gamma:I\to\dot\Sigma$, where $I$ is an interval, satisfying
\begin{enumerate}
\item
for all $f\in\Cinf(\Tb^*X)$ real valued,
\begin{equation*}\begin{split}
&\liminf_{s\to s_0}\frac{(f\circ\gamma)(s)-(f\circ\gamma)(s_0)}{s-s_0}\\
&\ \geq \inf\{\sH_p(\pi^* f)(q):
\ q\in\pi^{-1}(\gamma(s_0))\cap\Sigma(P)\},
\end{split}\end{equation*}
\item
and if $q_0=\gamma(s_0)\in\Tb^*_{p_0}X$, and $p_0$ lies in the interior
of a boundary
hypersurface (i.e.\ a boundary face which has codimension $1$, so near $p_0$,
$\pa X$ is smooth),
then in a neighborhood of $s_0$,
$\gamma$ is a generalized broken bicharacteristic in the sense
of Melrose-Sj\"ostrand \cite{Melrose-Sjostrand:I}, see also
\cite[Definition~24.3.7]{Hor}.
\end{enumerate}
\end{Def}

We mention that there is a different (more concrete) but equivalent
version of this definition, due to
Lebeau \cite{Lebeau:Propagation}, which was used in
\cite{Vasy:Propagation-Wave}; we describe this in the next section and
refer to \cite{Vasy:Propagation-EDP} for a discussion of the relationship.

We next need to recall the definition of the b-wave front set, $\WFb(u)$,
which was introduced by Melrose originally in order to study
propagation of singularities on manifolds with smooth boundaries
\cite{Melrose:Transformation}.
More precisely, what we need
is the wave front set relative to $H^1(X;\Lambda X)$
rather than the more usual $L^2(X;\Lambda X)$, although they are equivalent
(with an appropriate shift of orders) for solutions of the wave
equation by Lemma~\ref{lemma:Dirichlet-form}, Proposition~\ref{prop:elliptic}
and the argument of
\cite[Lemma~6.1]{Vasy:Propagation-Wave} -- see
\cite{Vasy:Propagation-Wave} and \cite{Vasy:Diffraction-edges}
for a discussion.

As a first step we recall the space of the b-pseudodifferential operators
(b-ps.d.o's) which perform the required microlocalization.
There are two closely related
pseudodifferential algebras,
corresponding to the classical and symbolic algebras, $\Psi_{\cl}(X)$ and
$\Psi(X)$, in the boundaryless case.
These are denoted
by $\Psib(X)$ and $\Psibc(X)$, respectively.
There is also a principal symbol on $\Psib^m(X)$;
this is now a homogeneous degree $m$ function on $\Tb^*X\setminus o$.
$\Psib(X)$ has the algebraic properties analogous to $\Psi(X)$ on
manifolds without boundary. $\Psib(X)$ can be described quite explicitly;
this was done for instance
in \cite{Melrose-Piazza:Analytic, Vasy:Propagation-Wave, Vasy:Diffraction-edges}
in the corners setting,
and in \cite[Section~18.3]{Hor} for smooth boundaries. In particular,
a {\em subset} of $\Psibc(X)$ (which would morally suffice for our purposes
here) consists of operators with Schwartz kernels
supported in $U\times U$, $U\subset X$ a coordinate chart with coordinates
$x,y$ as above, with Schwartz kernels of the form
\begin{equation}\begin{split}\label{eq:b-quantize}
&q(a)u(x,y)\\
&\qquad=(2\pi)^{-n}\int e^{i((x-x')\cdot \xi+(y-y')\cdot\zeta)}
\phi(\frac{x-x'}{x})
a(x,y,x\xi,\zeta) u(x',y')\,dx'\,dy'\,d\xi\,d\zeta,
\end{split}\end{equation}
understood as an oscillatory integral,
where $a\in S^m(\RR^n_{x,y};\RR^n_{\sigma,\zeta})$ (with $\sigma=x\xi$,
cf.\ \eqref{eq:pi-in-coords}),
$\phi\in\Cinf_{\compl}((-1/2,1/2)^k)$ is identically $1$ near $0$,
$\frac{x-x'}{x}=(\frac{x_1-x_1'}{x_1},\ldots,\frac{x_k-x_k'}{x_k})$,
and the integral in $x'$ is over $[0,\infty)^k$. This formula is similar
to the standard quantization formula, but $\xi$ is replaced by $x\xi$ here
in the argument of $a$, and there is a localizating factor $\phi$
which being identically $1$ near the diagonal, does not play an important
role. A subset of $\Psib(X)$ is similarly obtained if we require
that $a$ is a classical (i.e.\ one-step polyhomogeneous) symbol.
Thus, if $a$ is a polynomial in its third and fourth slots,
i.e.\ in $x\xi$ and $\zeta$, depending smoothly
on $x,y$, i.e.
$$
a(x,y,\xi,\zeta)=\sum_{|\alpha|+|\beta|\leq m}
a_{\alpha\beta}(x,y) (x\xi)^\alpha\zeta^\beta,
$$
then
$$
q(a)=\sum_{|\alpha|+|\beta|\leq m}
a_{\alpha\beta}(x,y) (xD_x)^\alpha D_y^\beta,
$$
thus connecting $\Vb(X)$ and $\Diffb(X)$ to $\Psib(X)$ in view of
\eqref{eq:b-vf}.
For vector bundles $E,F$ over $X$, one can also construct
$\Psib(X;E,F)$ via trivializations -- acting between distributional sections
of vector bundles $E$ and $F$ over $X$. Elements of $\Psibc^m(X)$ have
the important property that they map $\CI(X)\to\CI(X)$, and more
generally they map $x_j\CI(X)\to x_j\CI(X)$, so if $A\in\Psibc^m(X)$, then
$(Au)|_{\hsf_j}$ depends only on $u|_{\hsf_j}$ for $u\in\CI(X)$. In particular,
Dirichlet boundary conditions are automatically preserved by such $A$,
which makes $\Psib(X)$ easy to use in the analysis of the Dirichlet
problem in \cite{Vasy:Propagation-Wave}. We will need more care
for natural boundary conditions,
which is a point we address in the next section, see also
\cite{Vasy:Diffraction-edges}.

The space of `very nice' functions corresponding
to $\Vb(X)$ and $\Diffb(X)$, replacing $\Cinf(X)$, is the space of
conormal functions to the boundary relative to a fixed space of functions,
in this case $H^1(X;\Lambda X)$, i.e.\ functions
$v\in H^1_{\loc}(X;\Lambda X)$
such that $Qv\in H^1_{\loc}(X;\Lambda X)$ for every
$Q\in\Diffb(X;\Lambda X)$ (of any order).
Then $q\in\Tb^*X\setminus o$
is {\em not} in $\WFb(u)$ if there is an $A\in\Psib^0(X;\Lambda X)$ such that
$\sigma_{\bl,0}(A)(q)$ is invertible and $Au$ is $H^1$-conormal to the boundary.
Spelling out the latter explicitly, and also defining the wave front
set with finite regularity:

\begin{Def}
Suppose $u\in H^1_{\loc}(X;\Lambda X)$. Then $q\in\Tb^*X\setminus o$
is {\em not} in $\WFb^{1,\infty}(u)$ if there is an
$A\in\Psib^0(X;\Lambda X)$ such that
$\sigma_{\bl,0}(A)(q)$ is invertible and
$QAu\in H^1_{\loc}(X;\Lambda X)$ for all
$Q\in\Diffb(X;\Lambda X)$.

Moreover, $q\in\Tb^*X\setminus o$
is {\em not} in $\WFb^{1,m}(u)$ if there is an $A\in\Psib^m(X;\Lambda X)$
such that
$\sigma_{\bl,0}(A)(q)$ is invertible and $Au\in H^1_{\loc}(X;\Lambda X)$.
\end{Def}

The wave front set relative to the dual space, $\dot H^{-1}_R(X;\Lambda X)$,
is defined similarly.

We recall that the definition of $\WF$ could be stated in a completely
parallel manner: we would require (for $X$ without boundary)
$QAu\in L^2(X)$ for all $Q\in\Diff(X)$ -- this is equivalent to
$Au\in\Cinf(X)$ by the Sobolev embedding theorem. Here $L^2(X)$ can be replaced
by $H^m(X)$ instead, with $m$ arbitrary. Similarly, $\WF^m$ can also
be defined analogously: we require $Au\in L^2(X)$ for $A\in\Psi^m(X)$
elliptic at $q$.

The usefulness of the definition relies on the fact that
any $A\in\Psibc^0(X;\Lambda X)$ with compact support defines a continuous linear
maps $A:H^1(X;\Lambda X)\to H^1(X;\Lambda X)$
with norms bounded by a seminorm of $A$
in $\Psibc^0(X;\Lambda X)$, see \cite[Lemma~3.2]{Vasy:Propagation-Wave}
in the scalar case and the discussion after
\cite[Definition~6]{Vasy:Diffraction-edges} for the vector-valued case.

Our main result is the following:

\begin{thm}[See \cite{Vasy:Propagation-Wave} for the scalar equation
if $X=M\times\RR$ with a product metric, and \cite{Vasy:Diffraction-edges}
for the vector-valued equation with Dirichlet or Neumann boundary conditions,
and see Theorem~\ref{thm:prop-sing-restate} for a strengthened
restatement.]
\label{thm:prop-sing}
Suppose that $X$ is a manifold with corners with a $\CI$
Lorentz metric $h$ with respect to which every boundary face is
timelike.
Suppose $u\in H^1_{R,\loc}(X;\Lambda X)$ and
$\Box u=f$ in the sense of \eqref{eq:Box-as-op}-\eqref{eq:Box-def}
holding for all $v\in H^1_{R,\compl}(X;\Lambda X)$.
Then
$$
(\WFb^{1,m}(u)\cap\dot\Sigma)\setminus\WFb^{-1,m+1}(f)
$$
is a union of maximally extended generalized broken bicharacteristics
of $\Box$ in
$$
\dot\Sigma\setminus\WFb^{-1,m+1}(f).
$$

In particular, if $\Box u=0$ then $\WFb^{1,\infty}(u)\subset\dot\Sigma$
is a union of maximally extended generalized broken bicharacteristics
of $\Box$.

The same results hold with the relative boundary condition ($R$) replaced
by the absolute boundary condition ($A$) throughout.
\end{thm}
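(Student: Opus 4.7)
The plan is to adapt the positive-commutator framework of \cite{Vasy:Propagation-Wave, Vasy:Diffraction-edges} to the present setting; the genuinely new feature is that at a corner of codimension $\ell\geq 2$ the natural boundary conditions at the $\ell$ meeting hypersurfaces couple different components of $\Lambda X$ non-trivially. By the Hodge-star duality noted in the excerpt I reduce to relative boundary conditions at the outset. Elliptic regularity (the Proposition announced in the excerpt) gives $\WFb^{1,m}(u)\setminus\WFb^{-1,m+1}(f)\subset\dot\Sigma$; standard H\"ormander propagation handles the interior of $X$, and the argument of \cite{Vasy:Diffraction-edges} -- which is already compatible with natural boundary conditions on a single hypersurface, since there the conditions decompose into two scalar ones on tangential and normal components -- handles points lying over the interior of a boundary hypersurface, giving condition~(2) in Definition~\ref{def:gen-br-bich}.

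The substance of the proof is therefore the propagation estimate at points $q_0\in\dot\Sigma$ lying over a corner $F$ of codimension $\ell\geq 2$. In the usual dichotomy (hyperbolic versus glancing/diffractive points) the scheme is to construct a commutant $A=B^*B$ with $B\in\Psib^{m-1/2}(X;\Lambda X)$ whose scalar principal symbol $b$ is chosen as in \cite{Vasy:Propagation-Wave} so that $\sH_p(\pi^*b^2)$ has a definite sign on the region into which one wishes to propagate, and to derive a microlocal energy estimate on $Au$ from a positive lower bound on $\langle i[\Box, A^*A]u,u\rangle$. For this pairing to be defined via \eqref{eq:Box-def}, the commutant must preserve $H^1_R(X;\Lambda X)$: concretely, $Au|_\hsf$ must satisfy $\nu_\hsf\wedge Au|_\hsf=0$ for every $\hsf\in\pa_1(X)$ meeting the microlocal support of $B$.

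This compatibility requirement is the main obstacle, and it is what renders the problem non-scalar to leading order at corners. I would take $B$ of the form $B=Q\,\Pi_R$, where $Q\in\Psibc^{m-1/2}(X)$ is a scalar b-quantization of the desired commutant symbol (acting diagonally on $\Lambda X$), and $\Pi_R$ is a smooth bundle endomorphism of $\Lambda X$ (localized near $F$) that projects onto the subbundle $\{u:\nu_{\hsf_j}\wedge u=0,\ j=1,\ldots,\ell\}$ cut out by the $\ell$ natural boundary conditions at $F$. Since the conormals $\nu_{\hsf_1},\ldots,\nu_{\hsf_\ell}$ are linearly independent (as $\codim F=\ell$) but not orthogonal with respect to the Lorentz inner product, $\Pi_R$ is not orthogonal projection and depends non-trivially on $H$; this is where the non-scalar character enters. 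The commutator $i[\Box,A^*A]$ then splits into a leading-order diagonal piece, essentially scalar on the range of $\Pi_R$ and amenable to the Melrose--Sj\"ostrand argument of \cite{Vasy:Propagation-Wave}, plus off-diagonal remainders arising from $[\Box,\Pi_R]$ and from bundle-dependent lower-order terms of $\Box$. The key estimate to prove is that these remainders are one order lower on the microlocal support of $B$, so that they can be absorbed into error terms controlled by an inductive regularity hypothesis together with the Dirichlet-form estimates of Lemma~\ref{lemma:Dirichlet-form}.

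Once the estimate at $q_0$ is established, the usual continuation argument -- using that the set of GBB through a compact set is itself compact, cf.\ \cite{Vasy:Propagation-Wave} -- propagates the regularity along maximally extended GBB, yielding the first assertion. The statement for $\Box u=0$ follows by letting $m\to\infty$, and the absolute-boundary version by applying the Hodge star, which intertwines the relative and absolute problems and maps $H^1_R$ to $H^1_A$.
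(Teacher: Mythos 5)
Your overall scaffolding (Hodge-star reduction to the relative case, elliptic regularity off $\dot\Sigma$, hyperbolic/glancing dichotomy, and the final Melrose--Sj\"ostrand/Lebeau continuation argument) matches the paper, but the device you propose for the corner estimate --- the commutant $B=Q\,\Pi_R$ with $\Pi_R$ the fiberwise projection onto $\{\alpha:\ \nu_{\hsf_j}\wedge\alpha=0,\ j=1,\ldots,\ell\}$ --- does not work, and it misplaces where the genuine difficulty lies. That subbundle consists of forms divisible by $\nu_{\hsf_1}\wedge\cdots\wedge\nu_{\hsf_\ell}$ (it is the zero bundle on $\Lambda^p$ for $p<\ell$), so the principal symbol of $A=B^*B$ at $q_0$ is a non-invertible endomorphism of $\Lambda X$; the resulting positive commutator estimate can only control $\Pi_R u$ microlocally, and by the very definition of $\WFb^{1,m}$ (which requires a microlocalizer with invertible symbol at $q_0$) it cannot remove $q_0$ from $\WFb^{1,m}(u)$. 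Moreover $H^1_R$ near a corner is not the space of $H^1$ sections of any fixed subbundle: the relative condition at $\hsf_j$ constrains only the restriction to $\hsf_j$ of those components lacking a $dx_j$ factor, hypersurface by hypersurface, so no single projection encodes the boundary condition --- and none is needed. In the coordinate trivialization \eqref{eq:form-triv-corner} the relative conditions are componentwise, so any commutant that is scalar (diagonal) in that trivialization preserves $\CI_R$ and $H^1_R$ exactly, not merely symbolically; this is how the paper microlocalizes, and it is why preserving the boundary condition is in fact the easy part of the problem.

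The hard part, which your absorption claim glosses over, is the adjoint. The error terms you propose to discard (those from $[\Box,\Pi_R]$, and, in the paper's scalar-commutant setting, from $A^*-A$ where the adjoint is taken with respect to the positive-definite inner product) are \emph{not} lower order relative to the main term: with a commutant of order $2m-1$ they enter at order $2m$, the same order as $[\Box,A^*A]$ --- exactly the phenomenon the paper isolates when it notes that $(A^*-A)\Box$ has the same order as $[A,\Box]$ and that $A^*-A$ fails to preserve the boundary conditions, so the corresponding pairing can be controlled neither by the PDE nor by symbolic calculus. The paper's proof hinges on the precise structure of these terms established in Proposition~\ref{prop:gend-comm}: they are given by vertical vector fields annihilating $\sigma_i$ at $x_i=0$, and they occur either accompanied by factors $Q_i=D_{x_i}\otimes\Id$, which are small on the relevant microlocal support by Lemma~\ref{lemma:Dt-Dx} (smallness of normal derivatives near glancing points), or multiplied by the tangential symbol $\tilde p$, whose contribution is dominated by the positivity of the tangential part ($C^*C$ in \eqref{eq:P-comm}) rather than by the PDE; at hyperbolic points the positive-definite block $-A_{ij}$ absorbs them. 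Without an analogue of this structural analysis, your ``absorb into error terms controlled by an inductive regularity hypothesis and the Dirichlet-form estimates'' step has no justification, so the codimension $\geq 2$ corner estimate --- the heart of the theorem --- is not established.
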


On manifolds with $\CI$ boundaries (and no corners),
in the scalar setting this result is due to
Melrose, Sj\"ostrand and
Taylor \cite{Melrose-Sjostrand:I, Melrose-Sjostrand:II,
Taylor:Grazing, Melrose-Taylor:Kirchhoff}.
Still for $\CI$ boundaries and singularities, but
for systems,
including differential forms with natural boundary conditions,
Taylor, and Melrose and Taylor have shown the theorem
at diffractive
points via a parametrix construction \cite{Taylor:Grazing-II,
Melrose-Taylor:Boundary}, see also Yingst's work \cite{Yingst:Kirchhoff}.
In addition, Ivrii \cite{Ivrii:Wave}
has obtained propagation results for systems. Thus, the theorem
is known for $\CI$ boundaries and $\CI$ singularities.

The analogue of this theorem for the scalar equation but for analytic
singularities and spaces with an analytically stratified boundary
was proved by Lebeau \cite{Lebeau:Propagation},
following the work of Sj\"ostrand \cite{Sjostrand:Propagation-I} when
the boundary is analytic. As far as the author is aware, there is
no known analogue of this result in the analytic setting for systems
with natural boundary conditions, including gliding rays.

A special case of the scalar equation with codimension 2 corners
in $\Real^2$ had been considered by P.~G\'erard and Lebeau
\cite{Gerard-Lebeau:Diffusion} in the real analytic setting, and by
Ivrii \cite{Ivrii:Propagation-Corners} in the smooth setting.
It should also be mentioned that
due to its relevance, this problem has a long history, and has been studied
extensively by Keller in the 1940s and 1950s in various special settings,
see e.g.\ \cite{Blank-Keller:Diffraction, Keller:Diffraction}.

It is an interesting question to what extent the particular non-scalar
boundary conditions we have chosen (namely, relative or absolute) matter,
i.e.\ whether the results would hold for $\Box$, or similar operators
$P$ (as in \eqref{eq:P-form}, but acting on different spaces),
with other non-scalar boundary conditions, and also whether the form
bundle can be replaced by other bundles. We address this issue
in the last section of the paper.

The structure of this paper is the following. In Section~\ref{sec:idea}
we describe the geometric background in more detail, and use this to
explain the idea of the proof. In Section~\ref{sec:commutator}
we recall some commutator calculation preliminaries and prove the
main `commutator' lemma that we use later in the paper. In
Section~\ref{sec:elliptic} we recall the elliptic results
from \cite{Vasy:Propagation-Wave} and \cite{Vasy:Diffraction-edges}. Then in
Section~\ref{sec:normal} we prove the normal propagation estimate, and in
Section~\ref{sec:tangential} we prove glancing
propagation. In Section~\ref{sec:prop-sing}
we put together these results, and also extend the results to a larger class
of solutions, possessing a negative order of $\bl$-regularity relative to
$H^1_{R,\loc}(X;\Lambda X)$. Finally, in Section~\ref{sec:other}
we discuss other vector bundles and boundary conditions for which the
result holds.

I am very grateful to Rafe Mazzeo, Richard Melrose, Michael Taylor,
Gunther Uhlmann
and Jared Wunsch for
their interest in this project, for comments on the manuscript
and for fruitful discussions.

\section{Setup and idea of the proof}\label{sec:idea}

In order to explain how the theorem is proved,
we describe the geometry more precisely.

First, we choose local coordinates more carefully. In arbitrary local
coordinates
$$
(x_1,\ldots,x_k,y_1,\ldots,y_{n-k})
$$
on a neighborhood $\cU$
of a point in the interior
of a codimension $k$ corner $F$ given by $x_1=\ldots=x_k=0$ inside
$x_1\geq 0,\ldots, x_k\geq 0$, any symmetric bilinear
form on $T^*X$ can be written
as
\begin{equation}\label{eq:metric-form2}
H(x,y)=\sum_{i,j}A_{ij}(x,y)\,\pa_{x_i}\,\pa_{x_j}
+\sum_{i,j}2 C_{ij}(x,y)\,\pa_{x_i}\,\pa_{y_j}+\sum_{i,j}
B_{ij}(x,y)\,\pa_{y_i}\,\pa_{y_j}
\end{equation}
with $A,B,C$ smooth. Below we write covectors as
\begin{equation}\label{eq:T-star-X-coords}
\alpha=\sum_{i=1}^k\xi_i\,dx_i+\sum_{i=1}^{n-k}\zeta_i\,dy_i.
\end{equation}
Since we assume that
every boundary face, in particular $F$, is time-like in the sense
that the restriction of $H$ to $N^*F$ is negative definite, we deduce that
$A$ is negative definite, for locally the conormal bundle $N^*F$ is given by
$$
\{(x,y,\xi,\zeta):\ x=0,\ \zeta=0\}.
$$
Then $H$ is Lorentzian on the $H$-orthocomplement
$(N^*F)^\perp$ of $N^*F$. In fact, note that for $p_0\in F$,
\begin{equation}\label{eq:orth-decomp}
T^*_{p_0}X=N^*_{p_0}X\oplus (N^*_{p_0}X)^\perp,
\end{equation}
for if $V$ is
in the intersection of the two summands, then $H(V,V)=0$ and $V\in N^*_{p_0}F$,
so the definiteness of the inner product on $N^*F$ shows that $V=0$, hence
\eqref{eq:orth-decomp} follows as the dimension of the summands sums up to
the dimension of $T^*_{p_0}X$.
Choosing an orthogonal basis of $(N^*F)^\perp$ consisting of vectors
of length $\pm 1$ at a given point
$p_0\in F^\circ$, and then coordinates $y_j$ with differentials equal
to these basis vectors, we have in the new basis that $C_{ij}(0,0)=0$ and
\begin{equation}\label{eq:B-orth-0}
\sum B_{ij}(0,0)\pa_{y_i}\pa_{y_j}=\pa_{y_{n-k}}^2-\sum_{i<n-k}\pa_{y_i}^2,
\end{equation}
and we write coordinates on $T^*X$ as 
$$
x,\ t=y_{n-k},\ \yt=(y_1,\ldots,y_{n-k-1}),\ \xi,\ \tau=\zeta_{n-k},\ \zetat
=(\zeta_1,\ldots,\zeta_{n-k-1}),
$$
cf.\ \eqref{eq:T-star-X-coords}.
Thus $B$ is non-degenerate, Lorentzian,
near $p_0$, and a simple calculation shows that
the coordinates on $X$ can be chosen
(i.e.\ the $y_j$ can be adjusted) so
that $C(0,y)=0$.
Then
\begin{equation}\label{eq:H-at-corner2}
H|_{x=0}=\sum_{i,j}A_{ij}(0,y)\,\pa_{x_i}\,\pa_{x_j}
+\sum_{i,j}B_{ij}(0,y)\,\pa_{y_i}\,\pa_{y_j},
\end{equation}
and hence the metric function is
\begin{equation}\label{eq:p-at-corner2}
p|_{x=0}=\xi\cdot A(y)\xi+\zeta \cdot B(y)\zeta.
\end{equation}
This gives that
\begin{equation}\begin{split}\label{eq:dot-Sigma-coords2}
\dot\Sigma\cap\cU\cap\Tb^*_F X=\{(0,y,0,\zeta):\ 0\leq
\zeta \cdot B(y)\zeta,\ \zeta\neq 0\}.
\end{split}\end{equation}
In particular, in view of \eqref{eq:B-orth-0},
$\dot\Sigma\cap\cU$ lies in the region \eqref{eq:zeta-large}, at least
after we possibly shrink $\cU$.

In order to better understand the generalized broken bicharacteristics
for $\Box$,
we divide $\dot\Sigma$ into two subsets.
We thus define the {\em glancing set} $\cG$
as the set of points in $\dot\Sigma$ whose preimage under
$\hat\pi=\pi|_{\Sigma}$
consists of a single point, and define the {\em hyperbolic set} $\cH$
as its complement
in $\dot\Sigma$. Thus, $q\in\dot\Sigma$ lies in $\cG$ if and only
if on $\hat\pi^{-1}(\{q\})$, $\xi_j=0$ for all $j$.
More explicitly, with the notation of
\eqref{eq:dot-Sigma-coords2},
\begin{equation}\begin{split}\label{eq:H-G-exp2}
&\cG\cap\cU\cap\Tb^*_F X=\{(0,y,0,\zeta):\ \zeta \cdot B(y)\zeta=0,\ \zeta\neq 0\},\\
&\cH\cap\cU\cap\Tb^*_F X=\{(0,y,0,\zeta):\ \zeta \cdot B(y)\zeta>0,\ \zeta\neq 0\}.
\end{split}\end{equation}
Thus, $\cG$ corresponds to generalized broken bicharacteristics which
are tangent to $F$ in view of the vanishing of $\xi_j$, while
$\cH$ corresponds to generalized broken bicharacteristics which
are normal to $F$. Note that if $F$ is one-dimensional, which is the lowest
dimension it can be in view of the time-like restriction, then
$\zeta\cdot B(y)\zeta$ necessarily implies $\zeta=0$, so in fact
$\cG\cap\Tb^*_FX=\emptyset$.

We next make the role of $\cG$ and $\cH$ more explicit, which
explains the relevant phenomena better. An equivalent
characterization of \GBBsp is

\begin{lemma}(See the discussion in \cite[Section~1]{Vasy:Propagation-EDP}
after the statement of Definition~1.1.)\label{lemma:gen-br-bichar}
A continuous map
$\gamma:I\to\dot\Sigma$,
where $I\subset\Real$ is an interval, is a \GBBsp if and only if it satisfies
the following requirements:

\begin{enumerate}
\item
If $q_0=\gamma(s_0)\in\cG$
then for all
$f\in\Cinf(\Tb^*X)$,
\begin{equation}\label{eq:cG-bich}
\frac{d}{ds}(f\circ \gamma)(s_0)=\sH_p (\pi^*f)(\tilde q_0),\ \tilde q_0=\hat\pi^{-1}(q_0).
\end{equation}

\item
If $q_0=\gamma(s_0)\in\cH\cap \Tb^*_{F_\reg} X$ then
there exists $\ep>0$ such that
\begin{equation}
s\in I,\ 0<|s-s_0|<\ep\Rightarrow\gamma(t)\notin \Tb^*_{F_\reg} X.
\end{equation}

\item
If $q_0=\gamma(s_0)\in\cG\cap \Tb^*_{F_{\reg}}X$, and $F$ is a boundary
hypersurface (i.e.\ has codimension $1$), then in a neighborhood of $s_0$,
$\gamma$ is a generalized broken bicharacteristic in the sense
of Melrose-Sj\"ostrand \cite{Melrose-Sjostrand:I}, see also
\cite[Definition~24.3.7]{Hor}.
\end{enumerate}
\end{lemma}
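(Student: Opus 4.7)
My plan is to establish the equivalence by proving both directions.

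For the implication Definition $\Rightarrow$ Lemma: condition (3) is literally the restriction of part (ii) of the definition to glancing points. For condition (1), note that at $q_0 = \gamma(s_0)\in\cG$ the preimage $\hat\pi^{-1}(q_0)=\pi^{-1}(q_0)\cap\Sigma$ is a singleton $\{\tilde q_0\}$ by the very definition of the glancing set, so the infimum in part (i) of Definition~\ref{def:gen-br-bich} reduces to $\sH_p(\pi^*f)(\tilde q_0)$. Applying the liminf inequality to both $f$ and $-f$ sandwiches the difference quotient:
\begin{equation*}
\begin{split}
\sH_p(\pi^*f)(\tilde q_0) &\leq \liminf_{s\to s_0}\frac{f\circ\gamma(s)-f\circ\gamma(s_0)}{s-s_0}\\
&\leq \limsup_{s\to s_0}\frac{f\circ\gamma(s)-f\circ\gamma(s_0)}{s-s_0} \leq \sH_p(\pi^*f)(\tilde q_0),
\end{split}
\end{equation*}
yielding \eqref{eq:cG-bich}. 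For condition (2) at a hyperbolic point over a boundary hypersurface, the Melrose--Sj\"ostrand theory supplied by part (ii) of the definition gives the reflection structure forcing $\gamma$ off $\Tb^*_{F_\reg}X$. For a hyperbolic point over a corner of codimension $k\geq 2$, one uses that by \eqref{eq:p-at-corner2}--\eqref{eq:dot-Sigma-coords2} the preimage is a nontrivial ellipsoid in the $\xi$ variables, and argues via test functions on $\Tb^*X$ to force $\gamma$ off $\Tb^*_{F_\reg}X$ on both sides of $s_0$.

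For the reverse implication Lemma $\Rightarrow$ Definition: condition (ii) of the definition at glancing hypersurface points is exactly condition (3); at hyperbolic hypersurface points, (2) places $\gamma$ off the boundary immediately, and on each side $\gamma$ coincides with a bicharacteristic of $p$ by (1) applied at nearby interior (glancing) points, recovering the Melrose--Sj\"ostrand reflection picture. For part (i) of the definition, one verifies the inf-inequality case by case: at glancing points, the singleton preimage together with (1) gives equality; at hyperbolic points (2) reduces the verification to two punctured one-sided neighborhoods in which $\gamma$ evolves along ordinary bicharacteristics in adjacent strata, and compactness of the preimage together with continuity of $\gamma$ delivers the infimum bound in the limit $s\to s_0$.

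The main obstacle is the test-function construction in the forward direction for condition (2) at hyperbolic points over corners of codimension $k\geq 2$. The naive attempt, taking $f\in\Cinf(\Tb^*X)$ non-negative near $q_0$ and vanishing on $\Tb^*_F X$, fails: such $f$ necessarily have $\sH_p(\pi^*f)|_{x=0,\,\sigma=0}=2(A(y_0)\xi)\cdot(\pa_x f)(0,y_0,0,\zeta_0)$, which is linear in $\xi$ and so has vanishing infimum over the centrally symmetric preimage ellipsoid $\xi\cdot A(y_0)\xi = -\zeta_0\cdot B(y_0)\zeta_0$. The correct strategy either invokes Lebeau's alternative concrete characterization \cite{Lebeau:Propagation} (already noted in the statement) or uses several test functions simultaneously in a convex-analysis argument exploiting the negative-definiteness of $A$ and the time-like nature of $F$; the detailed verification is carried out in the discussion following Definition~1.1 of \cite{Vasy:Propagation-EDP}.
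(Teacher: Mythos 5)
The only substantive content of this lemma is the equivalence at points over corners of codimension $k\geq 2$ (everything over a hypersurface is handled by the Melrose--Sj\"ostrand clause), and the key forward step there --- your condition (2) at a hyperbolic point $q_0$ over such a corner --- is exactly what your proposal does not prove: you declare the test-function approach problematic and defer the ``detailed verification'' to Lebeau or to the discussion after Definition~1.1 of \cite{Vasy:Propagation-EDP}, which is the very citation the lemma carries. That is a genuine gap, and moreover the obstacle you identify is a red herring. You only considered $f$ vanishing identically on $\Tb^*_FX$, for which $\sH_p(\pi^*f)$ is indeed linear in $\xi$ and has nonpositive infimum over the symmetric preimage ellipsoid. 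But since $\gamma$ maps into $\dot\Sigma$ and $\dot\Sigma\cap\Tb^*_FX\subset\{\sigma=0\}$ by \eqref{eq:dot-Sigma-coords2}, it suffices to use a function vanishing on that set only, and the fiber-linear choice $f=-\sum_j\sigma_j$ (essentially the $\eta$ of Section~\ref{sec:normal}) does the job in one stroke: $\pi^*f=-\sum_j x_j\xi_j$, so at $x=0$, in the normalized coordinates with $C(0,y)=0$,
\begin{equation*}
\sH_p(\pi^*f)=-\sum_j\xi_j\,\pa_{\xi_j}p=-2\,\xi\cdot A(y_0)\xi
=2\,\zeta_0\cdot B(y_0)\zeta_0>0
\end{equation*}
on all of $\hat\pi^{-1}(q_0)$, a strictly positive \emph{constant} on the ellipsoid (the Hamilton derivative is quadratic, not linear, in $\xi$). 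Applying part (1) of Definition~\ref{def:gen-br-bich} to this single $f$, and using $f(\gamma(s_0))=0$, gives a positive two-sided lower Dini derivative of $f\circ\gamma$ at $s_0$, hence $\sum_j\sigma_j(\gamma(s))\neq0$ and therefore $\gamma(s)\notin\Tb^*_{F_\reg}X$ for $0<|s-s_0|<\ep$; no convex-analysis argument with several test functions is needed, and the same computation covers the codimension-one case uniformly.

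The parts you do carry out are fine in spirit: the sandwich argument with $\pm f$ at glancing points is correct (the singleton preimage collapses the infimum), and (3) is literally clause (ii) of the definition. But the converse direction is also only sketched at its one nontrivial point: to recover the liminf inequality of Definition~\ref{def:gen-br-bich} at a hyperbolic $q_0$ from (1)--(3) you need the lower semicontinuity of $q\mapsto\inf_{\hat\pi^{-1}(q)}\sH_p(\pi^*f)$ on $\dot\Sigma$ (via compactness and closedness of $\Sigma$, using $|\xi|\lesssim|\zeta|$ from negative definiteness of $A$) together with an integration of the estimate over the punctured interval where $\gamma$ lies over lower-codimension strata; ``compactness of the preimage together with continuity of $\gamma$ delivers the infimum bound'' asserts this rather than proves it. As written, both directions lean on the cited reference precisely where the content of the lemma lies.
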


The general strategy of the proof of the main theorem is to prove propagation
estimates at $\cG$ and $\cH$ separately. The estimates at $\cH$ can be weaker:
as the \GBBsp through these points are normal, one only needs to prove
that singularities leave $\Tb^*_{F_{\reg}} X$ immediately,
for then an inductive
argument, using that locally $F$ is the most singular stratum, allows one
to deduce the desired propagation.

However, at $\cG$ one has to prove a more precise result. Namely, if
$q_0\in\cG$, there is a unique point $\alpha_0\in\hat\pi^{-1}(\{q_0\})$,
and we need to prove that roughly speaking
singularities propagate in the direction of
$(\pi_*)_{\alpha_0}\sH_p$. More precisely (although we actually use a vector
field on $T^*F$ in Section~\ref{sec:tangential}, and a product decomposition
of $X$ near a point in $F_{\reg}$), let $W$ be a vector field on
$\Tb^*X$ with $W(q_0)=(\pi_*)_{\alpha_0}\sH_p$. Then we need to prove that
for small $\delta>0$, there
is an $o(\delta)$-sized ball around $\exp(\delta W)\alpha_0$ such that
if this ball contains no singularities of $u$, then $\alpha_0\notin
\WFb^{1,m}(u)$ either. We show that indeed
there is an $O(\delta^2)$-sized ball with this property,
just like for the scalar or the vector-valued
equation on manifolds with corners with Dirichlet or Neumann boundary
condition.

One basic difficulty is that
we need to
use operators which preserve the boundary conditions in order to microlocalize.
As we want to use principally scalar operators, at the principal symbol
level this is automatic, but we need operators fully (not merely
symbolically) preserving the boundary conditions. To achieve this,
we locally trivialize the form bundle (and use microlocalizers supported
in such a coordinate chart) in such a way that the boundary
conditions state the vanishing of various components of the trivialization.
More concretely,
a local trivialization over an open set $\cU$ of $\Lambda^p X$ is a map
$\Lambda^p_{\cU}X\to \cU\times \RR^N$, $N=\dim\Lambda^p_q X$, $q\in X$,
being given
by the binomial coefficient; we want this such that
there is an index set $J_j\subset\{1,\ldots,N\}$
for $j=1,\ldots,k$,
such that for each $j$ and
at each $q\in \cU\cap \hsf_j=\{x_j=0\}$, for a form
$\alpha$ to satisfy $dx_j\wedge u=0$ requires precisely that $\alpha_m=0$ for
$m\in J_j$, where $\alpha=(\alpha_1,\ldots,\alpha_N)$ with repect
to the trivialization. The construction of such a trivialization
is straightforward, however, using
\begin{equation}\label{eq:form-triv-corner}
dx_{i_1}\wedge\ldots\wedge
dx_{i_s}\wedge dy_{\ell_1}\wedge\ldots\wedge dy_{\ell_{p-s}},\ i_1<\ldots<i_s,
\ \ell_1<\ldots<\ell_{p-s},
\end{equation}
as the basis of $\Lambda^p_q X$, $dx_j\wedge u=0$ amounts to saying
that all components of $\alpha$ in which $j$ is not one the $i_r$'s
vanish. Similarly, using the Hodge star operator, there is
such a good trivialization for the absolute boundary condition as well,
namely $*$ applied to the basis of \eqref{eq:form-triv-corner}.

Now the mere existence of such a trivialization, and hence of `scalar'
(namely, diagonal with respect to the trivialization) b-pseudodifferential
operators guarantees that the elliptic regularity arguments go through
since in these arguments commutators are lower order hence negligible.
Thus, microlocal elliptic regularity was proved by the author
in \cite{Vasy:Diffraction-edges}. However, the matters are much more
complicated for hyperbolic and glancing points, as at these points
the propagation estimates are positive commutator estimates. In positive
commutator estimates it is convenient to have formally self-adjoint
commutants; however, if one has a scalar operator, its adjoint is
usually not scalar -- and indeed, usually does not preserve even the relevant
subbundles, hence the boundary
conditions\footnote{This is the difference with codimension one boundaries,
where
one can simply take an {\em orthogonal} decomposition of the cotangent
bundle $dx_1$ being orthogonal to the span of $dy_1,\ldots,dy_{n-1}$, which
implies that adjoints of block-diagonal operators,
i.e.\ operators respecting
this decomposition, are also block-diagonal at $\hsf=\hsf_1$, which is
what matters.}.
One can also work with non-self-adjoint commutants, in which
case one has in the boundaryless setting
$$
\langle \Box u,Au\rangle-\langle Au,\Box u\rangle
=\langle (A^*-A)\Box u,u\rangle+\langle[A,\Box]u,u\rangle;
$$
we refer to Proposition~\ref{prop:gend-comm} for the correct statement in the
presence of boundaries.
Roughly speaking,
the difficulty here is that $A^*-A$  does not preserve the natural
boundary conditions, and $(A^*-A)\Box$ is the same order as $[A,\Box]$
(if $A$ has real scalar principal symbol) -- this is a problem
since if $A^*-A$ preserved boundary conditions,
$\langle \Box u,(A^*-A)u\rangle$ would be controlled by the PDE, but this
is not so otherwise\footnote{Indeed, even the principal symbol of $A^*-A$
does not preserve boundary conditions typically.}.
However, it turns out that modulo terms one can easily
control (because normal derivatives of $u$ are small at glancing points),
one can replace $\Box$ by its tangential part, and instead of using
the PDE, use the positivity of the commutator to control this term.

Our results then combine to prove the main theorem, using the argument
of Melrose and Sj\"ostrand \cite{Melrose-Sjostrand:I,
Melrose-Sjostrand:II}, as modified by Lebeau
\cite[Proposition~VII.1]{Lebeau:Propagation}.

\section{Commutator constructions}\label{sec:commutator}
We start by recalling\footnote{See \cite{Melrose:Atiyah},
which mostly deals with the $\CI$ boundary case, and especially
\cite{Melrose-Piazza:Analytic} as a background reference;
\cite{Vasy:Propagation-Wave} has a brief discussion as well on
$\Psibc(X)$.}
that for any vector bundle $E$ over $X$,
$$
\Psibc(X;E)=\cup_s\Psibc^s(X;E),
\ \Psib(X;E)=\cup_s\Psib^s(X;E)\subset\Psibc(X;E),
$$
are sets of operators
\begin{equation}\begin{split}\label{eq:Psibc-CI}
&\Psibc(X;E)\ni A:\dCI(X;E)\to\dCI(X;E),\\
&\Psibc(X;E)\ni A:\CI(X;E)\to\CI(X;E),
\end{split}\end{equation}
where $\dCI(X;E)$ denotes the subspace of $\CI(X;E)$ ($\CI$ sections of
$E$) which vanish with all derivatives at $\pa X$. Then
$\Psibc(X;E)$ is a filtered algebra of operators, $\Psib(X;E)$ is
closed under composition, as well as under addition under a compatibility
condition on orders,
with a principal symbol
map
$$
\sigma_{\bl,s}:\Psib^s(X;E)\to S^s_{\hom}(\Tb^*X\setminus o;\pi^*\Hom(E,E)),
$$
where $\pi:\Tb^*X\to X$ is the bundle projection, and $S^s_{\hom}$ denotes
homogeneous degree $s$, $\CI$ functions on $\Tb^*X\setminus o$, while
$$
\sigma_{\bl,s}:\Psibc^s(X;E)\to S^s(\Tb^*X;\pi^*\Hom(E,E))
/S^{s-1}(\Tb^*X;\pi^*\Hom(E,E)).
$$
Thus,
if $B_j\in\Psib^{s_j}(X;E)$, $j=1,2$, then
$$
B_1 B_2\in\Psib^{s_1+s_2}(X;E),
$$
with
$$
\sigma_{\bl,s_1+s_2}(B_1 B_2)(q)=\sigma_{\bl,s_1}(B_1)(q)
\sigma_{\bl,s_2}(B_2)(q),\ q\in\Tb^*X\setminus o,
$$
where the product of the right is composition of endomorphisms of
the fiber of $E$ at the point $\pi(q)$, and similarly for $\Psibc^s(X;E)$.

If $B_j\in\Psib^{s_j}(X;E)$ and $\sigma_{\bl,s_j}(B_j)$ is
{\em scalar}, $j=1,2$, i.e.\ is a multiple of the identity homomorphism:
$$
\sigma_{\bl,s_j}(B_j)=b_j\,\Id,\ b_j\in S^{s_j}_{\hom}(\Tb^*X),
$$
then their commutator is
$$
[B_1,B_2]=B_1 B_2-B_2 B_1\in\Psib^{s_1+s_2-1}(X;E),
$$
with
$$
\sigma_{\bl,s_1+s_2-1}([B_1,B_2])=\imath(\sH_{\bl,b_1} b_2)\,\Id;
$$
the analogous result also holds for $\Psibc(X;E)$.
Here $\sH_{\bl,a}$ is the b-Hamilton vector field of $a\in\CI(\Tb^*X)$,
i.e.\ it is the unique $\CI$ vector field on $\Tb^*X$ which agrees
with the standard Hamilton vector field $\sH_{a}$ on $T^* X^\circ$ under
the natural identification of $T^*X^\circ$ with $\Tb^*_{X^\circ}X$. Thus,
in the notation of \eqref{eq:pi-in-coords},
$\sH_{\bl,a}=\pi_*\sH_{\pi^*a}$. In
local coordinates,
$$
\sH_{\bl,a}=\sum_j (\pa_{\sigma_j}a)\,x_j\pa_{x_j}+\sum_j(\pa_{\zeta_j}a)
\,\pa_{y_j}-\sum_j (x_j\pa_{x_j} a)\,\pa_{\sigma_j}-\sum_j (\pa_{y_j}a)
\,\pa_{\zeta_j},
$$
see e.g.\ \cite[Proof of Lemma~2.8]{Vasy:Propagation-Wave}.
In particular, it is a vector field tangent to all boundary faces
of $\Tb^* X$, and has vanishing $\pa_{\sigma_j}$ component at $\{x_j=0\}$.
Note also that $\sigma_{\bl,s_1+s_2-1}([B_1,B_2])$ depends only on
$b_1$ and $b_2$.

On the other hand, suppose now that $B_j\in\Psib^{s_j}(X;E)$, $j=1,2$, and
$\sigma_{\bl,s_1}(B_1)$ is scalar. Then $\sigma_{\bl,s_1}(B_1)$ and
$\sigma_{\bl,s_2}(B_2)$ commute, hence
$$
\sigma_{\bl,s_1+s_2}([B_1,B_2])=0,
$$
so
$$
[B_1,B_2]\in\Psib^{s_1+s_2-1}(X;E).
$$
However, the principal symbol of the commutator now depends on $B_1$
via more than its principal symbol -- see also Remark~\ref{rem:subpr-adj-dep}.

We also recall that if we equip $E$ with a Hermitian inner product
and put a $\CI$ density $\nu$ on $X$, thus obtaining an inner product
on $L^2(X;E)$, then $A\in\Psibc^0(X;E)$ with
compactly supported Schwartz kernel is bounded, with
norm bounded by a seminorm of $A$
in $\Psibc^0(X;E)$ -- see \cite[Equation~(2.16)]{Melrose-Piazza:Analytic}.
Moreover, if $A$ has scalar principal symbol
$\sigma_{\bl,0}(A)=a\,\Id$, then there
exists $A'\in\Psib^{-1}(X;E)$ such that for all
$v\in L^2(X;E)$,
\begin{equation*}
\|Av\|\leq 2\sup|a|\,\|v\|+\|A'v\|;
\end{equation*}
see \cite[Section~2]{Vasy:Propagation-Wave}.
In addition, $\Psibc^0(X;E)$ and $\Psib^0(X;E)$ are closed under
$L^2$-adjoints. Thus, dually -- with respect to the $L^2$-inner product --
to \eqref{eq:Psibc-CI}, with
$\dist(X;E)$ the dual of $\dCI(X;E)$, $\ddist(X;E)$ dual to $\CI(X;E)$,
\begin{equation}\begin{split}\label{eq:Psibc-dist}
&\Psibc(X;E)\ni A:\dist(X;E)\to\dist(X;E),\ \text{resp.}\\
&\Psibc(X;E)\ni A:\ddist(X;E)\to\ddist(X;E),
\end{split}\end{equation}
defined by
\begin{equation}\label{eq:Psibc-dist-def}
Au(\phi)=u(A^*\phi),\ \phi\in\dCI(X;E),\ \text{resp.}\ \phi\in\CI(X;E).
\end{equation}

Next, the definition of $\Diff\Psib(X)$ from
\cite[Definition~2.3]{Vasy:Propagation-Wave}:

\begin{Def}\label{Def:Diff-Psib}
$\Diff^k\Psib^s(X)$ is the vector space of operators of the form
\begin{equation}\label{eq:Diff-Psib-def}
\sum_j P_j A_j,\ P_j\in\Diff^k(X),\ A_j\in\Psib^s(X),
\end{equation}
where the sum is locally finite in $X$.
\end{Def}

Equivalently, the order of the factors can be reversed, i.e.\ these
operators can be written as
\begin{equation*}
\sum_j A'_j P'_j,\ P'_j\in\Diff^k(X),\ A'_j\in\Psib^s(X).
\end{equation*}
The key point (in local coordinates) is
that while $D_{x_j}\notin\Vb(X)$,
for any $A\in\Psib^{m}(X)$ there is an operator $\tilde A\in\Psib^m(X)$
such that
\begin{equation}\label{eq:V-Psib-1}
D_{x_j}A-\tilde A D_{x_j}\in\Psib^m(X),
\end{equation}
and analogously for $\Psib^m(X)$ replaced by $\Psibc^m(X)$, see
\cite[Equation~(2.3)]{Vasy:Propagation-Wave}.
Indeed, one may write
\begin{equation}\label{eq:A-At-Bt}
D_{x_j}A=\tilde A D_{x_j}+\tilde B,\ \tilde A=x_j^{-1}Ax_j,
\ \tilde B=x_j^{-1}[x_jD_{x_j},A],
\end{equation}
and thus we even
have $\sigma_{\bl,m}(\tilde A)=\sigma_{\bl,m}(A)$.

Indeed, recall from \cite[Lemma~2.5]{Vasy:Propagation-Wave}
that $\Diff^k\Psib^s(X)$ is a filtered algebra with respect to operator
composition, with $B_j\in\Diff^{k_j}\Psib^{s_j}(X)$,
$j=1,2$, implying $B_1 B_2\in\Diff^{k_1+k_2}\Psib^{s_1+s_2}(X)$. Moreover,
with $B_1,B_2$ as above,
\begin{equation*}
[B_1,B_2]\in\Diff^{k_1+k_2}\Psib^{s_1+s_2-1}(X).
\end{equation*}
We also recall the following lemma that computes the principal symbol of
a commutator:

\begin{lemma}(\cite[Lemma~2.8]{Vasy:Propagation-Wave})\label{lemma:comm-symbol}
Let $\pa_{x_j}$, $\pa_{\sigma_j}$ denote local coordinate vector fields
on $\Tb^*X$ in the coordinates $(x,y,\sigma,\zeta)$.
For $A\in\Psib^m(X)$ with Schwartz kernel supported in the coordinate patch,
$a=\sigma_{\bl,m}(A)\in\Cinf(\Tb^* X\setminus o)$,
we have $[D_{x_j},A]=A_1D_{x_j}+A_0\in\Diff^1\Psib^{m-1}(X)$
with $A_0\in\Psib^m(X)$, $A_1\in\Psib^{m-1}(X)$ and
\begin{equation}\label{eq:sigma-A_j}
\sigma_{\bl,m-1}(A_1)
=\frac{1}{\imath}\pa_{\sigma_j}a,\ \sigma_{\bl,m}(A_0)=\frac{1}{\imath}\pa_{x_j}a.
\end{equation}
This result also holds with $\Psib(X)$ replaced by $\Psibc(X)$ everywhere.
\end{lemma}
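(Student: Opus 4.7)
The plan is to build directly on identity \eqref{eq:A-At-Bt}, which already supplies the algebraic decomposition. Subtracting $AD_{x_j}$ from both sides of $D_{x_j}A=\tilde A D_{x_j}+\tilde B$ yields $[D_{x_j},A]=(\tilde A-A)D_{x_j}+\tilde B$, so the natural choices are $A_1=\tilde A-A$ and $A_0=\tilde B$. What remains is to verify that $A_1\in\Psib^{m-1}(X)$ and $A_0\in\Psib^m(X)$, and to identify their principal symbols.

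For $A_1$, I would first invoke the standard b-calculus fact that conjugation by a boundary-defining function preserves $\Psib^m(X)$ with unchanged principal symbol (read off from the quantization formula \eqref{eq:b-quantize}, or equivalently from the smoothness of $x_j'/x_j$ on the b-stretched product). Thus $\tilde A\in\Psib^m(X)$ with $\sigma_{\bl,m}(\tilde A)=a$, so $A_1=\tilde A-A\in\Psib^m(X)$ has vanishing order-$m$ symbol and hence lies in $\Psib^{m-1}(X)$. To pin down $\sigma_{\bl,m-1}(A_1)$, I use the identity $x_jA_1=Ax_j-x_jA=[A,x_j]$ together with the commutator formula for b-pseudodifferential operators, which applies because $x_j\in\Psibc^0$ has scalar principal symbol $x_j$; the local formula for $\sH_{\bl,a}$ then gives $\sigma_{\bl,m-1}([A,x_j])=\frac{1}{\imath}\sH_{\bl,a}(x_j)=\frac{x_j}{\imath}\pa_{\sigma_j}a$. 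Dividing by $x_j$, first on $\{x_j>0\}$ and then extending by continuity to all of $\Tb^*X\setminus o$, gives $\sigma_{\bl,m-1}(A_1)=\frac{1}{\imath}\pa_{\sigma_j}a$.

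For $A_0=\tilde B$, I note that $x_jD_{x_j}\in\Vb(X)\subset\Psib^1(X)$ has scalar b-principal symbol $\sigma_j$, so the commutator formula gives $[x_jD_{x_j},A]\in\Psib^m(X)$ with principal symbol $\frac{1}{\imath}\sH_{\bl,\sigma_j}a$. The local formula for $\sH_{\bl,\cdot}$ specializes cleanly to $\sH_{\bl,\sigma_j}=x_j\pa_{x_j}$, so $\sigma_{\bl,m}([x_jD_{x_j},A])=\frac{x_j}{\imath}\pa_{x_j}a$. The same division-by-$x_j$ argument used for $A_1$ then shows $\tilde B\in\Psib^m(X)$ with $\sigma_{\bl,m}(\tilde B)=\frac{1}{\imath}\pa_{x_j}a$.

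The main (though mild) obstacle is the justification that $\tilde A$ and $\tilde B$ genuinely lie in the b-calculus rather than merely have the right formal principal symbols; this is the statement that $\Psib(X)$ is stable under the operations used (conjugation by $x_j$, and division by $x_j$ of an operator whose leading b-symbol vanishes to first order at $\{x_j=0\}$), which one extracts from \eqref{eq:b-quantize} or cites from \cite{Melrose-Piazza:Analytic}. The $\Psibc$ case is proved identically, since the conjugation and commutator identities all hold symbolically with principal symbols taken modulo one lower order.
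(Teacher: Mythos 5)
Your decomposition and your symbol computations are correct, and they follow essentially the route the paper itself sets up around \eqref{eq:V-Psib-1}--\eqref{eq:A-At-Bt} (the paper offers no proof of the lemma, citing \cite[Lemma~2.8]{Vasy:Propagation-Wave}): take $A_1=\tilde A-A=x_j^{-1}[A,x_j]$ and $A_0=\tilde B=x_j^{-1}[x_jD_{x_j},A]$, get $A_1\in\Psib^{m-1}(X)$ from conjugation-invariance of the b-calculus together with exactness of the symbol sequence, and then identify both principal symbols by multiplying by $x_j$, applying the commutator symbol formula, and dividing by continuity. The symbolic division step is legitimate exactly because membership in the calculus has been established beforehand, and your convention $\sigma_{\bl}([B_1,B_2])=\frac{1}{\imath}\sH_{\bl,b_1}b_2$ is the one consistent with \eqref{eq:sigma-A_j} (test on $[D_{x_j},x_jD_{x_j}]=\frac{1}{\imath}D_{x_j}$).

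The one step that is not justified as written is $\tilde B=x_j^{-1}[x_jD_{x_j},A]\in\Psib^m(X)$. You invoke ``stability of $\Psib(X)$ under division by $x_j$ of an operator whose leading b-symbol vanishes to first order at $\{x_j=0\}$'', but that principle is false: $B=1+x_jD_{y_1}\in\Psib^1(X)$ has leading symbol $x_j\zeta_1$, vanishing precisely to first order at $x_j=0$, yet $x_j^{-1}B=x_j^{-1}+D_{y_1}$ is not a b-pseudodifferential operator. Vanishing of the principal symbol at the boundary controls nothing about the lower-order part of the Schwartz kernel; what is needed is the operator-level statement $[x_jD_{x_j},A]\in x_j\Psib^m(X)$, equivalently the vanishing of its indicial (normal) family at $\hsf_j=\{x_j=0\}$. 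This is where the argument actually closes: $\hat N_{\hsf_j}(x_jD_{x_j})(\sigma_j)=\sigma_j$ is a central (scalar) element, so by multiplicativity of the normal family $\hat N_{\hsf_j}([x_jD_{x_j},A])(\sigma_j)=[\sigma_j,\hat N_{\hsf_j}(A)(\sigma_j)]=0$ for every $\sigma_j$, and the characterization recalled in Section~\ref{sec:commutator} (vanishing of all normal operators at $\hsf_j$ implies membership in $x_j\Psibc$, likewise for $\Psib$) gives $[x_jD_{x_j},A]\in x_j\Psib^m(X)$, hence $\tilde B\in\Psib^m(X)$; alternatively, one can see the overall factor of $x_j$ by a direct computation with \eqref{eq:b-quantize}, integrating by parts in $\xi_j$. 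Note that your membership argument for $A_1$ (via conjugation by $x_j$) does not suffer from this issue; with the above replacement, your proof is complete, including the $\Psibc$ case.
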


These results extend immediately to operators acting on sections of
a vector bundle $E$, provided that in the case of \eqref{eq:sigma-A_j}
we require that $A$ has scalar principal symbol, and provided that
we replace $D_{x_j}$ by any $Q\in\Diff^1(X;E)$ with scalar principal
symbol $\xi_j\,\Id_{\Lambda X}$.

Adjoints play a major role in positive commutator estimates, with a
prominent role due to the boundary conditions. We
consider operators acting on functions first before turning to
operators acting on forms. For
the Dirichlet problem the boundary condition
can be handled in a number of ways (essentially
because of the density of $\dCI(X)$ in $H^1_0(X)$), but for other boundary
conditions more care is required.
In \cite{Vasy:Propagation-Wave}, for the Neumann problem, pairings were
considered, and one factor of a differential operator was always left
on each slot of the pairing. Here we use the approach of
\cite{Melrose-Vasy-Wunsch:Corners} to enlarge $\Diff\Psib(X)$ somewhat
by adjoints of differential operators, so that one need not write down
quadratic forms at every point. However, this is mostly only a stylistic
issue.

We first recall the basic function spaces. For $k\geq 0$ integer, we let
$H^k(X)$ be the completion of $\Cinf_\compl(X)$ with respect to
the $H^k(X)$ norm. Then we define $H^k_0(X)$ as the
closure of $\dCinf_\compl(X)$ inside $H^k(X)$.
If $\tilde X$ is a manifold without boundary, and $X$ is embedded into it,
one can also extend elements of $H^k(X)$ to elements $H^k_{\loc}(\tilde X)$.
With H\"ormander's notation
\cite[Appendix~B.2]{Hor}, $H^k_{\loc}(X)=\bar H^k_{\loc}(X^\circ)$
-- \cite[Appendix~B.2]{Hor} discusses the case of a smooth boundary only,
but the general case is similar, see \cite[Section~3]{Vasy:Propagation-Wave}.
As is clear from
the completion definition, $H^k_{0,\loc}(X)$ can be identified with the subset
of $H^k_{\loc}(\tilde X)$ consisting of functions supported in $X$.
Thus, $H^k_{0,\loc}(X)=\dot H^k_{\loc}(X)$ with the notation of
\cite[Appendix~B.2]{Hor}.

We let $H^{-k}(X)$ be the dual of $H^k_0(X)$ and $\dot H^{-k}(X)$ be
the dual of $H^k(X)$, with respect to an extension of the sesquilinear form
$\langle u,v\rangle=
\int_X u\,\overline v\,d\tilde g$, i.e.\ the $L^2$ inner product.
As $H^k_0(X)$ is a closed subspace of $H^k(X)$, $H^{-k}(X)$ is the quotient
of $\dot H^{-k}(X)$ by the annihilator of $H^k_0(X)$, hence there is
a canonical map
$$
\rho:\dot H^{-k}(X)\to H^{-k}(X).
$$
In terms of the identification of the $H^k$
spaces above, $H^{-k}_{\loc}(X)
=\bar H^{-k}_{\loc}(X^\circ)$ in the notation of \cite[Appendix~B.2]{Hor},
i.e.\ its elements are the restrictions to $X^\circ$ of elements
of $H^{-k}_{\loc}(\tilde X)$. Analogously, $\dot H^{-k}_{\loc}(X)$ consists
of those elements of $H^{-k}_{\loc}(\tilde X)$ which are supported in $X$.

If $P\in \Diff^k(X)$, then it defines a continuous linear
map
$$
P:H^k(X)\to L^2(X).
$$
Thus, its Banach space adjoint
(with respect to the {\em sesquilinear} dual pairing) is
a map
\begin{equation}\begin{split}\label{eq:Banach-adjoint}
&P^*:(L^2(X))^*=L^2(X)\to (H^k(X))^*
=\dot H^{-k}(X),\\
&\qquad\qquad
\langle P^* u,v\rangle=\langle u,Pv\rangle,\ u\in L^2(X),
\ v\in H^k(X).
\end{split}\end{equation}

There is an important distinction here between considering $P^*$ as stated,
or as composed with the quotient map, $\rho\circ P^*$.

\begin{lemma}(cf.\ \cite[Lemma~5.18]{Melrose-Vasy-Wunsch:Corners})
\label{lemma:Diff-adjoints}
Suppose that $P\in \Diff^k(X)$. Then there exists a unique
$Q\in \Diff^{k}(X)$ such that $\rho\circ P^*=Q$.
However, in general, acting on $\CI(X)$, $P^*\neq Q$.

If, on the other hand, $P\in \Diffb^{k}(X)$, then there exists a unique
$Q\in \Diffb^{k}(X)$ such that $P^*=Q$.
\end{lemma}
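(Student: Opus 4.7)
The plan is to reduce everything to an integration by parts computation on the dense subspace $\dCI_\compl(X)\subset H^k_0(X)$, where no boundary terms appear, and then track whether the identity can be extended beyond this subspace.

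First, I would construct $Q$ locally as the formal $L^2$ adjoint of $P$ with respect to the density $|dh|$. In a coordinate patch writing $P=\sum_\alpha a_\alpha(x,y)D^\alpha$, one has the standard formula $Q=\sum_\alpha D^\alpha \circ (\bar a_\alpha \cdot)$ up to a smooth positive density factor; in particular $Q\in\Diff^k(X)$. These local versions patch by uniqueness, since for $u,v\in\dCI_\compl(X)$ supported in an overlap,
\[
\int_X (Pu)\,\bar v\,|dh|=\int_X u\,\overline{Qv}\,|dh|
\]
by integration by parts, and all boundary terms at each $\pa_\ell X$ vanish thanks to $v$ vanishing to infinite order there. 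This determines $Q$ uniquely among differential operators, because its coefficients are determined by its action on $\CI_\compl(X^\circ)$.

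Next I would show $\rho\circ P^*=Q$ as maps $L^2(X)\to H^{-k}(X)$. For any $u\in L^2(X)$ and $v\in\dCI_\compl(X)$ the identity $\langle P^*u,v\rangle=\langle u,Pv\rangle=\langle Qu,v\rangle$ holds by the preceding computation, using that $Q\in\Diff^k(X)$ defines $Qu\in H^{-k}(X)$ by duality from $Q:H^k_0(X)\to L^2(X)$. Since $\dCI_\compl(X)$ is dense in $H^k_0(X)$ and both sides are continuous on $H^k_0(X)$, the identity extends there, giving $\rho\circ P^*=Q$. To illustrate that $P^*\neq Q$ on $\CI(X)$ in general, I would take $P=D_{x_j}$ and pair $P^*u$ with a $v\in\CI(X)$ not vanishing at $\{x_j=0\}$; integration by parts against $|dh|$ picks up a nonzero boundary term of the form $c\int_{\{x_j=0\}}u\bar v\,|dh|_{\hsf_j}$, so $P^*u-Qu$ is a nonzero element of $\dot H^{-k}(X)$ supported on $\hsf_j$ which is annihilated precisely by $\rho$.

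For the b-case I would work locally and use that any $P\in\Diffb^k(X)$ is a finite sum of products of $x_jD_{x_j}$ and $D_{y_\ell}$ with $\CI$ coefficients (by \eqref{eq:b-vf}). It therefore suffices to compute the formal adjoints of the generators: writing $|dh|=a(x,y)\,dx\,dy$ with $a\in\CI$ positive, integration by parts in $x_j$ gives
\[
\int (x_jD_{x_j}u)\,\bar v\,a\,dx\,dy=\int u\,\overline{(x_jD_{x_j}v)}\,a\,dx\,dy
+\int u\,\bar v\,\bigl(-\tfrac{1}{i}\pa_{x_j}(x_ja)\bigr)\,dx\,dy,
\]
with the boundary contribution at $x_j=0$ killed by the factor $x_j$; the analogous computation for $D_{y_\ell}$ gives no boundary term because $y_\ell$ is a tangential coordinate. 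Thus each generator has adjoint in $\Diffb^1(X)$, and patching via uniqueness as in the first part yields $Q\in\Diffb^k(X)$ with $\langle u,Pv\rangle=\langle Qu,v\rangle$ for all $u\in L^2(X)$, $v\in\CI_\compl(X)$, i.e.\ $P^*=Q$ with no need to quotient by $\rho$. The main obstacle is purely bookkeeping: verifying that the boundary contributions in the generic integration by parts cancel exactly when one stays inside $\Vb(X)$, which is the algebraic shadow of the geometric fact that b-vector fields are tangent to every boundary face.
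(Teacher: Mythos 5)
Your argument is correct and is essentially the intended one: the paper itself omits the proof, deferring to \cite[Lemma~5.18]{Melrose-Vasy-Wunsch:Corners}, and your route -- define $Q$ as the formal adjoint with respect to $|dh|$, identify $\rho\circ P^*$ with $Q$ via density of $\dCI_\compl(X)$ in $H^k_0(X)$, exhibit the boundary term for $P=D_{x_j}$, and in the b-case check on the generators $x_jD_{x_j}$, $D_{y_\ell}$ that tangency kills all boundary contributions -- is exactly that standard integration-by-parts proof specialized to the present setting. The only point to state a bit more carefully is the last step: having the boundary-term-free identity for $u,v\in\CI_\compl(X)$, one extends in $v$ over $H^k(X)$ (using that $\CI_\compl(X)$ is dense there by the completion definition) and in $u$ over $L^2(X)$ by continuity to conclude the operator identity $P^*=Q$ in $\dot H^{-k}(X)$, which is what your ``i.e.\ $P^*=Q$'' implicitly asserts.
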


As the proof is a simple modification of that of
\cite[Lemma~5.18]{Melrose-Vasy-Wunsch:Corners} (which actually
deals with a somewhat more complicated case), we omit it here. Indeed,
one can simply regard our setting as a special case of that of
\cite[Lemma~5.18]{Melrose-Vasy-Wunsch:Corners}, namely when one is working away
from the edge that is blown up there -- see
\cite[Section~2]{Melrose-Vasy-Wunsch:Corners} for a discussion of the
relationship.

Following \cite{Melrose-Vasy-Wunsch:Corners}
we now define an extension of $\Diff(X)$ as follows.

\begin{Def}
Let $\Diffd^{k}(X)$ denote the set of Banach space adjoints of elements
of $\Diff^{k}(X)$ in the sense of \eqref{eq:Banach-adjoint}.

Also let $\Diffs^{2k}(X)$ denote
operators of the form
$$
\sum_{j=1}^N Q_j P_j,\ P_j\in \Diff^{k}(X),\ Q_j\in\Diffd^{k}(X).
$$
For $X$ non-compact, the sum is taken to be locally finite.
\end{Def}

Thus, if $P\in \Diffs^{2k}(X)$, $P_j$, $Q_j$ as above, and $Q_j
=R_j^*$, $R_j\in \Diff^{k}(X)$, then
$$
\langle Pu,v\rangle=\sum_{j=1}^N\langle P_j u,R_j v\rangle.
$$

\begin{rem}
While for $R\in\Diff^k(X)$, $R^*\in\Diffd^k(X)$ depends on the inner
product on $L^2(X)$, i.e.\ on the $\CI$ density inducing it, the set
of adjoints is independent of the choice of inner product.
\end{rem}

We now turn to differential operators acting on vector bundles. For this
purpose it is often useful (but is sometimes not absolutely necessary)
to have a positive definite inner product on
$\Lambda X$ (unlike the pairing induced by the Lorentz metric $h$). We will
only consider such inner products
induced by a Riemannian metric $\tilde h$.
Let $H$, resp.\ $\tilde H$
denoting the dual metrics, as well as the induced metrics on forms;
these can be thought of as maps $\Lambda_p X\to (\Lambda_p X)^*$,
hence
\begin{equation}\label{eq:J-def}
J=H^{-1}\tilde H
\end{equation}
is an isomorphism of $\Lambda_p X$.
The inner products then satisfy
\begin{equation}\label{eq:h-ht-ptwise}
(u,Jv)_H=(\tilde H v)(u)=(u,v)_{\tilde H}=(Ju,v)_H,
\end{equation}
and the inner product $(.,.)_{\tilde H}$ is positive definite. In particular
$$
J^*_H=J,\ (J^{-1})^*_{\tilde H}=J^{-1},
$$
where $(.)^*_H$ denotes the adjoint of an endomorphism with respect to
the $H$ inner product.
Thus, with the last equation being a definition,
\begin{equation}\label{eq:h-ht}
\langle u,Jv\rangle_H=\int (u,Jv)_H \,|dh|
=\int (u,v)_{\tilde H} \,|dh|=\langle u,v\rangle_{L^2(X;\Lambda X;|dh|\otimes
\tilde H)}\equiv\langle u,v\rangle,
\end{equation}
where the inner product on the right is thus with respect to $\tilde H$ on
the fibers, but using the density $|dh|$, and is positive definite.

As we only consider natural boundary conditions, and indeed relative
boundary conditions only, we
discuss adjoints only in this setting to avoid overburdening the notation.
Namely, proceeding as above,
if $P\in \Diff^1(X;\Lambda X)$, then it defines a continuous linear
map
$$
P:H^1_R(X;\Lambda X)\to L^2(X;\Lambda X).
$$
Thus, its Banach space adjoint is the map
\begin{equation}\begin{split}\label{eq:Banach-R-adjoint}
&P^*:(L^2(X;\Lambda X))^*=L^2(X;\Lambda X)\to (H^1_R(X;\Lambda X))^*
\equiv\dot H^{-1}_R(X;\Lambda X),\\
&\qquad\qquad
\langle P^* u,v\rangle=\langle u,Pv\rangle,\ u\in L^2(X;\Lambda X),
\ v\in H^1_R(X;\Lambda X).
\end{split}\end{equation}

We extend the preceeding definitions:

\begin{Def}\label{eq:Def-Diffs}
Let $\Diffd^{1}(X;\Lambda X)$ denote the set of
Banach space adjoints of elements
of $\Diff^{1}(X;\Lambda X)$ in the sense of \eqref{eq:Banach-R-adjoint}.

Also let $\Diffs^{2}(X;\Lambda X)$ denote
operators of the form
$$
\sum_{j=1}^N Q_j P_j,\ P_j\in \Diff^{1}(X;\Lambda X),
\ Q_j\in\Diffd^{1}(X;\Lambda X).
$$
For $X$ compact, the sum is taken to be finite; for
$X$ non-compact, it is taken to be locally finite.
\end{Def}

Again, $\Diffs^{2k}(X;\Lambda X)$ is independent of the choice of the
metrics $h$ and $\tilde h$.

However, for calculations below
we need to be somewhat careful in our choice of $\tilde h$. One convenient
choice is a Riemannian metric $\tilde H$ which satisfies
\begin{equation}\label{eq:tilde-H-def}
\tilde H=-H+2\pa_{y_{n-k}}^2
\end{equation}
in a neighborhood of $p_0\in F$ with local coordinates nearby as in
\eqref{eq:B-orth-0}.
Note that $-H+2\pa_{y_{n-k}}^2$
is indeed Riemannian in a sufficiently small neighborhood of the point $p_0$
in view of \eqref{eq:B-orth-0}, so the desired $\tilde H$ exists.
Notice that on this neighborhood $\tilde H(dx_j,.)=H(dx_j,.)$,
thus relative boundary conditions are preserved by $J$. (Note that the
definition of $J$ given in \cite{Vasy:Diffraction-edges} after Equation~(10)
is not correct
in all situations; the present definition should be used instead.)

We can now describe the form of
\begin{equation}\label{eq:Box-as-map}
\Box\in\Diffs^2(X;\Lambda X):H^1_{R,\loc}(X;\Lambda X)
\to (H^1_{R,\compl}(X;\Lambda X))^*.
\end{equation}

\begin{lemma}\label{lemma:Box-form}
Let $\cU$ be a coordinate chart with coordinates $(x_1,\ldots,x_k,
y_1,\ldots,y_{n-k})$ such that \eqref{eq:H-at-corner2} holds,
\eqref{eq:tilde-H-def} is valid, and
trivialize $\Lambda X$ using the coordinate differentials.
With $Q_i=D_{x_i}\otimes\Id_{\Lambda X}$,
the wave operator with relative boundary conditions, i.e.\ as a map
\eqref{eq:Box-as-map},
satisfies
\begin{equation}\label{eq:Box-form}
\Box=\sum_{i,j}Q_i^* A_{ij}(x,y) Q_j+\sum_i (M_i Q_i
+Q_i^* M'_i)+\Ptil\ \text{on } \cU
\end{equation}
with
\begin{equation}\begin{split}
&M_i,M'_i\in \Diffb^{1}(X;\Lambda X),\ \Ptil\in\Diffb^{2}(X;\Lambda X)\\
&\sigma_{\bl,1}(M_i)=m_i\,\Id=\sigma_{\bl,1}(M'_i),
\ \sigma_{\bl,2}(\Ptil)=\ptil\,\Id,\\
& m_i|_F=0,\ \ptil
=\sum_{i,j=1}^{n-k}B_{ij}(x,y)\zeta_i\zeta_j.
\end{split}\end{equation}
\end{lemma}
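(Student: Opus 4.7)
My plan is to decompose $\Box$ on $\cU$ as a differential operator on sections of $\Lambda X$ in the coordinate-differential trivialization, and then to regroup terms into the claimed structure. Since $\sigma_2(\Box)=p\,\Id$ with $p$ given in these coordinates by \eqref{eq:metric-form2}, and since the $y_j$ have been adjusted to give \eqref{eq:H-at-corner2} with $C(0,y)=0$, we can write on the interior of $\cU$
\[
\Box = \sum_{i,j} A_{ij}(x,y) Q_i Q_j + 2\sum_{i,j} C_{ij}(x,y) Q_i D_{y_j} + \sum_{i,j} B_{ij}(x,y) D_{y_i}D_{y_j} + P_1,
\]
with $P_1\in\Diff^1(X;\Lambda X)$. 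The plan is to show the first sum contributes $\sum Q_i^* A_{ij} Q_j$ modulo a $\Diffb$-error, the second contributes $\sum_i(M_iQ_i+Q_i^*M'_i)$ modulo a $\Diffb^2$-error, the third is already the leading part of $\Ptil$, and the errors together with $P_1$ redistribute among these pieces without disturbing the required principal data.

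For the tangential sum, $\sum B_{ij}D_{y_i}D_{y_j}$ is manifestly in $\Diffb^2(X;\Lambda X)$ with scalar principal b-symbol $\sum B_{ij}\zeta_i\zeta_j$, which is the required $\ptil$. For the mixed sum, I set $\tilde N_i = 2\sum_j C_{ij}(x,y)D_{y_j}\in\Diffb^1(X;\Lambda X)$, having scalar principal b-symbol $2\sum_j C_{ij}\zeta_j$, so that the mixed sum equals $\sum_i\tilde N_i Q_i$. I symmetrize via
\[
\tilde N_i Q_i = \tfrac12\tilde N_i Q_i + \tfrac12 Q_i^*\tilde N_i + \tfrac12[\tilde N_i, Q_i] + \tfrac12(Q_i - Q_i^*)\tilde N_i,
\]
and argue that the last two terms lie in $\Diffb^2(X;\Lambda X)$ using Lemma~\ref{lemma:comm-symbol} and \eqref{eq:V-Psib-1}, hence are absorbed into $\Ptil$. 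This identifies $M_i = \tfrac12\tilde N_i$ and $M'_i = \tfrac12\tilde N_i$ (either $\tilde N_i$ or its adjoint would serve; they share their scalar principal b-symbol), so the common $m_i = \sum_j C_{ij}(x,y)\zeta_j$ is scalar, and $m_i|_F=0$ is immediate from $C_{ij}(0,y)=0$.

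For the normal sum, I use the identity $A_{ij}Q_i = Q_i^* A_{ij} + R_{ij}$. Because $Q_i$ and $Q_i^*$ have the same interior principal symbol $\xi_i$ and $A_{ij}$ is a scalar smooth function, $R_{ij}$ has vanishing interior principal symbol, hence lies in $\Diffb^0$ modulo the distributional boundary contribution intrinsic to the Banach adjoint. Thus $\sum_{ij}A_{ij}Q_iQ_j - \sum_{ij}Q_i^*A_{ij}Q_j = \sum_j\bigl(\sum_i R_{ij}\bigr)Q_j$, which fits as an additional $M_jQ_j$ with $M_j\in\Diffb^0\subset\Diffb^1$ of vanishing $\sigma_{\bl,1}$; this affects neither $m_j$ nor the condition $m_j|_F=0$. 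Finally the first-order contributions from $P_1$ split into pieces involving one $D_{x_i}$ (absorbed into $M_i, M'_i$ at $\sigma_{\bl,1}$-level zero) and pieces with no $D_{x_i}$ (absorbed into $\Ptil$ at $\sigma_{\bl,2}$-level zero), yielding \eqref{eq:Box-form} with the stated properties.

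The main obstacle I anticipate is the bookkeeping of the Banach adjoints $Q_i^*$ and $\tilde N_i^*$, which in the sense of Definition~\ref{eq:Def-Diffs} carry distributional boundary contributions supported on $\pa X$. The crucial supporting fact is that the Riemannian metric $\tilde H$ of \eqref{eq:tilde-H-def} is chosen so that the isomorphism $J=H^{-1}\tilde H$ preserves relative boundary conditions; this ensures the boundary contributions implicit in $Q_i^*$ match exactly those already built into $\Box$ viewed as $H^1_{R,\loc}\to (H^1_{R,\compl})^*$ via \eqref{eq:Box-def}, so the algebraic rearrangement genuinely reproduces $\Box$ on the relevant spaces rather than only in the interior.
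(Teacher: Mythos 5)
There is a genuine gap, and it sits exactly where you park it in your last paragraph. The lemma is not a statement about the interior differential operator: $\Box$ here means the map \eqref{eq:Box-as-map} defined weakly by the quadratic form \eqref{eq:Box-def}, and \eqref{eq:Box-form} is an identity of maps $H^1_{R,\loc}(X;\Lambda X)\to (H^1_{R,\compl}(X;\Lambda X))^*$. Two such maps can agree on test forms supported in $X^\circ$ and still differ by terms supported on $\pa X$, so matching the interior expression $\sum A_{ij}Q_iQ_j+2\sum C_{ij}Q_iD_{y_j}+\sum B_{ij}D_{y_i}D_{y_j}+P_1$ and then regrouping proves nothing about the boundary contributions, which are the entire point of the statement. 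Moreover, your absorption steps are not legitimate as written: by Lemma~\ref{lemma:Diff-adjoints}, $Q_i^*-Q_i$ (with $Q_i^*$ the Banach adjoint of \eqref{eq:Banach-R-adjoint}) is not a differential operator — it is a boundary term — so $\tfrac12(Q_i-Q_i^*)\tilde N_i$ does not lie in $\Diffb^2(X;\Lambda X)$ and cannot be absorbed into $\Ptil$, and likewise the ``$R_{ij}$'' in $A_{ij}Q_i=Q_i^*A_{ij}+R_{ij}$ is not an $M_j\in\Diffb^0$ ``modulo the distributional boundary contribution'': that contribution has nowhere to go in \eqref{eq:Box-form} except inside the $Q_i^*$ factors themselves, and verifying that it lands there exactly is the substance of the lemma. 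Asserting that ``$J$ preserves relative boundary conditions, so the boundary contributions match'' does not do this; that property of $J$ only licenses substituting $Jv$ as a test form, it does not by itself control the boundary terms.

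What is missing is the analytic input the paper uses: the integration-by-parts (Weitzenb\"ock-with-boundary) identity
\begin{equation*}
\langle du,dv\rangle_H+\langle\delta u,\delta v\rangle_H
=\langle \nabla u,\nabla v\rangle_H+\langle Ru,v\rangle_H
+\int_{\pa X}(\tilde R u,v)_H\,dS_{\pa X}
\end{equation*}
for $u,v$ satisfying the relative boundary condition (from \cite[Section~5]{Vasy:Diffraction-edges}, following \cite[Section~4]{Mitrea-Taylor-Vasy:Lipschitz}). The crucial content is that, thanks to the boundary condition on \emph{both} slots, the boundary term involves no derivatives of $u$ — it is a bundle endomorphism $\tilde R$ — so it can be rewritten as $(R')^*-R''$ with $R',R''\in\Diff^1$ and absorbed into the subprincipal parts of $M_iQ_i$, $Q_i^*M_i'$, $\Ptil$. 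Without this step you cannot exclude first-order boundary terms, which would be incompatible with the claimed structure. After that, one substitutes $Jv$ (here the specific choice \eqref{eq:tilde-H-def} enters, ensuring $\tilde H(dx_j,\cdot)=H(dx_j,\cdot)$ so $J$ preserves the relative condition), converts the $H$-pairing to the positive definite one, and computes $\langle\nabla u,\nabla Jv\rangle_H=\sum_{ij}\langle (J^{-1}D_{w_j}J)^*H_{ij}D_{w_i}u,v\rangle$, with $J^{-1}D_{w_j}J=D_{w_j}+T_j$ and $C_{ij}|_{x=0}=0$ giving $m_i|_F=0$ and $\ptil$ as claimed. Your interior bookkeeping of principal symbols (the identification of $\ptil$, the symmetrization of the cross terms, $m_i|_F=0$ from $C(0,y)=0$) is consistent with the paper's conclusion, but the proof stands or falls on the boundary-term identity above, which your proposal assumes rather than establishes.
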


\begin{proof}
As shown in \cite[Section~5]{Vasy:Diffraction-edges} (which in turn
follows \cite[Section~4]{Mitrea-Taylor-Vasy:Lipschitz}), using
the trivialization to define $\nabla$,
for $u\in\CI_R(X;\Lambda X)$,
$v\in\CI_{R,\compl}(X;\Lambda X)$:
\begin{equation*}
\langle du,dv\rangle_H+\langle\delta u,\delta v\rangle_H
=\langle \nabla u,\nabla v\rangle_H+\langle Ru,v\rangle_H
+\int_{\pa X}(\tilde R u,v)_H\,dS_{\pa X}
\end{equation*}
for some smooth bundle endomorphism $\tilde R$ and a first order
differential operator $R$; by continuity and density
this holds whenever $u\in H^1_{R,\loc}(X;\Lambda X)$,
$v\in H^1_{R,\compl}(X;\Lambda X)$. Thus, the wave equation $\Box u=f$ becomes
\begin{equation}\label{eq:d-del-nabla}
\langle f,v\rangle_H
=\langle \nabla u,\nabla v\rangle_H+\langle Ru,v\rangle_H
+\int_{\pa X}(\tilde R u,v)_H\,dS_{\pa X}
\end{equation}
for all $v\in H^1_{R,\compl}(X;\Lambda X)$.
Rewriting this, replacing $v$ by $Jv\in H^1_{R,\compl}(X;\Lambda X)$ (where
we use that $J$ preserves the boundary condition),
\begin{equation}\begin{split}
\langle f,v\rangle=\langle f,Jv\rangle_H
&=\langle \nabla u,\nabla Jv\rangle_H+\langle Ru,Jv\rangle_H
+\int_{\pa X}(\tilde R u,Jv)_H\,dS_{\pa X}\\
&=\langle \nabla u,\nabla Jv\rangle_H+\langle Ru,v\rangle
+\int_{\pa X}(\tilde R u,v)_{\tilde H}\,dS_{\pa X}\\
\end{split}\end{equation}
for all $v\in H^1_{R,\compl}(X;\Lambda X)$.

Now $R\in\Diff^1(X;\Lambda X)$ means that it can be absorbed in the $M_i Q_i$
and $\tilde P$ terms, as subprincipal terms (i.e.\ the contribution to
the principal symbol of $M_i$ and $\tilde P$ vanishes). Similarly,
$\tilde R$ can be rewritten as the boundary term arising by taking
the adjoint of a first order differential operator, i.e.\ is of the
form $(R')^*-R''$ for $R',R''\in\Diff^1(X;\Lambda X)$, hence again
can be absorbed in the subprincipal terms of the $M_i Q_i$, $Q_i^* M_i'$ and
$\tilde P$ terms.
Thus, it suffices to check the form of
$\langle \nabla u,\nabla Jv\rangle_{L^2(X;\Lambda X)}$.

But writing the coordinates $(x,y)$ as $w$ and using \eqref{eq:h-ht-ptwise}
pointwise,
\begin{equation*}\begin{split}
&\langle\nabla u,\nabla Jv\rangle_H
=\sum_{ij}\int (H_{ij}D_{w_i}u,D_{w_j} Jv)_H\,|dh|\\
&=\sum_{ij}\int (J^{-1}H_{ij}D_{w_i}u,D_{w_j} Jv)_{\tilde H}\,|dh|\\
&=\sum_{ij}\int (H_{ij}D_{w_i}u,J^{-1}D_{w_j} Jv)_{\tilde H}\,|dh|
=\sum_{ij}\langle (J^{-1}D_{w_j}J)^*H_{ij}D_{w_i}u,v\rangle
\end{split}\end{equation*}
where the remaining pairing is the $L^2$-pairing on functions, i.e.\ is
the integral of the product (with a complex conjugation).
Now, $J^{-1}D_{w_j}J=D_{w_j}+T_j$, $T_j\in\CI(X;\End(\Lambda X))$.
Rewriting $w$ as $(x,y)$, and using that the crossterm $C_{ij}(x,y)$
vanishes at $x=0$, we obtain \eqref{eq:Box-form} by noting that the
terms with $T_i$ and $T_j$ can be incorporated in the subprincipal
terms of the $M_i$, $M'_i$ and $\tilde P$ terms of \eqref{eq:Box-form}.
\end{proof}

The operators whose solutions we consider below are perturbations of
$\Box$ by first order operators, i.e.\ we assume that
\begin{equation}\begin{split}\label{eq:P-form}
&P=\Box+P_1:H^1_{R,\loc}(X;\Lambda X)\to \dot H^{-1}_{R,\loc}(X;\Lambda X),\\
&P_1\in\Diff^1(X;\Lambda X)+\Diffd^1(X;\Lambda X).
\end{split}\end{equation}

We now consider adjoints; these are the main cause of difficulty
for commutator constructions. For simplicity (as this is what we need below),
we assume that $\Lambda X$ is trivialized as above, and
$A\in\Psib^m(X;\Lambda X)$ is scalar with respect to this trivialization,
i.e.\ is of the form $A_0\otimes \Id$, $A_0\in\Psib^m(X)$,
$\sigma_{\bl,m}(A_0)=a$. For
$u=(u_\alpha)$, $v=(v_\beta)$ with respect to this trivialization, the fiber
inner
product takes the form
$$
(u,v)=\sum_{\alpha\beta} \tilde H_{\alpha\beta} u_\alpha \overline{v_\beta},
$$
hence the inner product on sections of $\Lambda X$ supported in the
coordinate chart takes the form
$$
\langle u,v\rangle_{L^2(X,\Lambda X;|dh|\otimes\tilde H)}=\int_X
\sum_{\alpha\beta} \tilde H_{\alpha\beta} u_\alpha \overline{v_\beta}\,|dh|
=\sum_{\alpha,\beta}\langle \tilde H_{\alpha\beta}u_{\alpha}
,v_{\beta}\rangle_{L^2(X)}.
$$
In particular, for $A_0$ formally self-adjoint with respect to the
$L^2(X)$-inner product,
\begin{equation*}\begin{split}
\langle u,Av\rangle_{L^2(X,\Lambda X;|dh|\otimes\tilde H)}
&=\sum_{\alpha,\beta}\langle \tilde H_{\alpha\beta}u_{\alpha}
,A_0 v_{\beta}\rangle_{L^2(X)}
=\sum_{\alpha,\beta}\langle A_0 \tilde H_{\alpha\beta}u_{\alpha}
,v_{\beta}\rangle_{L^2(X)}\\
&=\sum_{\alpha,\beta}\langle \tilde H_{\alpha\beta}A_0 u_{\alpha}
,v_{\beta}\rangle_{L^2(X)}+\langle [A_0,\tilde H_{\alpha\beta}] u_{\alpha},
v_{\beta}\rangle_{L^2(X)}\\
&=\langle (A+C)u,v\rangle_{L^2(X,\Lambda X;|dh|\otimes\tilde H)},
\end{split}\end{equation*}
where $C=(C_{\alpha\nu})$ is the matrix form of $C\in\Psib^{m-1}(X;\Lambda X)$
with respect to the trivialization, and
\begin{equation*}\begin{split}
&C_{\alpha\nu}=\sum_{\mu}\tilde h_{\mu\nu}[A_0,\tilde H_{\alpha\mu}],\\
&\qquad
\sigma_{\bl,m-1}(C_{\alpha\nu})=\imath
\sum_{\mu}\tilde h_{\mu\nu}\sH_{\bl,\tilde H_{\alpha\mu}}a,
\end{split}\end{equation*}
In particular, notice that $\sH_{\bl,\tilde H_{\alpha\mu}}$ is a vertical
vector field on the vector bundle
$\Tb^*X$, and at $F=\{x_1=\ldots=x_k=0\}$, it is
a linear combination of the vector fields $\pa_{\zeta_j}$:
\begin{equation}\label{eq:Ham-vf-form-metric}
\sH_{\bl,\tilde H_{\alpha\mu}}=-\sum_j (\pa_{y_j}\tilde H_{\alpha\mu})\pa_{\zeta_j}
-\sum_j x_j(\pa_{x_j}\tilde H_{\alpha\mu})\pa_{\sigma_j}.
\end{equation}
Note that even the principal symbol $C$ does {\em not}
usually preserve boundary conditions, which means that we cannot
estimate an expression like $\langle Cu,\Box u\rangle$ by using the
PDE.

To summarize, we have proved
the following result.

\begin{prop}\label{prop:adjoints}
With $\Lambda X$ trivialized as above, suppose that
$A\in\Psib^m(X;\Lambda X)$ is scalar,
i.e.\ is of the form $A_0\otimes \Id$, $A_0\in\Psib^m(X)$, $A_0$ is
self-adjoint with respect to $|dh|$, $\sigma_{\bl,m}(A_0)=a$. Then
there are smooth vector fields $V_{\alpha\beta}$ on $\Tb^*X$ such that
\begin{equation}\begin{split}
&A^*-A=C,\ \sigma_{\bl,m-1}(C)_{\alpha\beta}=(V_{\alpha\beta} a),\\
&\qquad (\pi_{\Tb^*X\to X})_*(V_{\alpha\beta})=0,
\ V_{\alpha\beta}|_{x_j=0}\sigma_j=0,
\end{split}\end{equation}
where $\pi_{\Tb^*X\to X}:\Tb^*X\to X$ is the bundle projection.
\end{prop}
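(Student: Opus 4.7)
The statement essentially consolidates the computation given in the three displays immediately preceding it, and my plan is to write that computation up as a formal proof while emphasizing the two structural facts that underlie the claim. The first step is to compute $A^*$ with respect to the positive definite inner product $\langle\cdot,\cdot\rangle = \langle\cdot,\cdot\rangle_{L^2(X,\Lambda X;|dh|\otimes\tilde H)}$. Using the trivialization I expand $\langle u,Av\rangle = \sum_{\alpha,\beta}\langle \tilde H_{\alpha\beta} u_\alpha, A_0 v_\beta\rangle_{L^2(X;|dh|)}$, then use formal self-adjointness of $A_0$ to move it across the scalar factor $\tilde H_{\alpha\beta}$, picking up a commutator $[A_0,\tilde H_{\alpha\beta}]$, which belongs to $\Psib^{m-1}(X)$. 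Regrouping the result back into an $\tilde H$-weighted pairing identifies $A^* = A + C$ with $C\in\Psib^{m-1}(X;\Lambda X)$ whose matrix entries in the trivialization are $C_{\alpha\nu} = \sum_\mu \tilde h_{\mu\nu}[A_0,\tilde H_{\alpha\mu}]$.

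The second step is to read off the principal symbol. Since $\sigma_{\bl,m}(A_0) = a$ is scalar and $\tilde H_{\alpha\mu}$ is a base function on $X$ pulled back to $\Tb^*X$, Lemma~\ref{lemma:comm-symbol} (or equivalently the standard symbol calculus for commutators with scalar principal symbols) gives
$\sigma_{\bl,m-1}([A_0,\tilde H_{\alpha\mu}]) = \frac{1}{\imath}\sH_{\bl,a}(\tilde H_{\alpha\mu}) = -\frac{1}{\imath}\sH_{\bl,\tilde H_{\alpha\mu}}(a)$.
Setting $V_{\alpha\nu} := \imath\sum_\mu \tilde h_{\mu\nu}\,\sH_{\bl,\tilde H_{\alpha\mu}}$, this yields $\sigma_{\bl,m-1}(C)_{\alpha\nu} = V_{\alpha\nu}(a)$, which is the desired form.

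The third and last step is geometric, and is where the two special properties of $V_{\alpha\nu}$ come from. Because each $\tilde H_{\alpha\mu}$ is a function on the base $X$, the local formula \eqref{eq:Ham-vf-form-metric} shows that $\sH_{\bl,\tilde H_{\alpha\mu}}$ has no $x_j\pa_{x_j}$- or $\pa_{y_j}$-component, only $\pa_{\sigma_j}$- and $\pa_{\zeta_j}$-components; this immediately gives $(\pi_{\Tb^*X\to X})_*V_{\alpha\nu} = 0$. Moreover, the $\pa_{\sigma_j}$-coefficient in that formula carries an explicit factor $x_j$, so $\sH_{\bl,\tilde H_{\alpha\mu}}\sigma_j = -x_j\pa_{x_j}\tilde H_{\alpha\mu}$ vanishes at $x_j = 0$; smooth linear combinations preserve this, giving $V_{\alpha\nu}|_{x_j=0}\sigma_j = 0$. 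I do not expect any real obstacle here: the only subtlety is bookkeeping to ensure that the adjoint is being taken with respect to the positive definite $\tilde H$ rather than the indefinite $H$, which is exactly what the introduction of $J$ in \eqref{eq:h-ht} and the specific choice \eqref{eq:tilde-H-def} accomplish, and which has already been handled prior to the statement.
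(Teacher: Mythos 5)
Your proposal is correct and follows essentially the same route as the paper, which proves Proposition~\ref{prop:adjoints} by exactly the computation you describe: expanding $\langle u,Av\rangle$ in the trivialization, moving the self-adjoint scalar $A_0$ across $\tilde H_{\alpha\beta}$ to produce $C_{\alpha\nu}=\sum_\mu\tilde h_{\mu\nu}[A_0,\tilde H_{\alpha\mu}]$, and reading off verticality and the vanishing of the $\pa_{\sigma_j}$-coefficient at $x_j=0$ from \eqref{eq:Ham-vf-form-metric}. No gaps.
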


\begin{rem}\label{rem:subpr-adj-dep}
It is important that $A$ was not merely principally scalar. Indeed,
if we replace $A$ by $A+A_1$, $A_1\in\Psib^{m-1}(X;\Lambda X)$, then
$A^*-A$ is replaced by $C_1=(A^*-A)+(A_1^*-A_1)$, with $A_1^*-A_1\in
\Psib^{m-1}(X;\Lambda X)$, so the principal symbol of $C_1$ is not
determined by $a$.
\end{rem}

In order to have the $A_0$ self-adjoint as above, it is convenient
to introduce the following notation. For $A_0\in\Psib(X)$,
let $A_0^\dagger$
denote the $L^2(X)$-adjoint of $A_0$, and let
\begin{equation}\label{eq:A-dagger-def}
A^\dagger=A_0^\dagger\otimes\Id.
\end{equation}
Thus,
$A^\dagger$ is the $L^2(X;\Lambda X)$ adjoint of $A$ if we put
the Euclidean inner product on the fibers of $\Lambda X$ using the
trivialization, and use $|dh|$ as the density to integrate with
respect to.

First, however, we need to discuss the action of $A\in\Psibc^s(X;\Lambda X)$,
$s\in\RR$,
on $\CI_R(X;\Lambda X)$ and $H^1_R(X;\Lambda X)$. Again, for simplicity
assume that $A$ is supported in a coordinate chart $\cU$ as above. Then $A$
has a normal family
$$
\hat N_{\hsf_j}(A)(\sigma_j):\CI(\hsf_j;\Lambda_{\hsf_j}X)
\to \CI(\hsf_j;\Lambda_{\hsf_j}X),\ \sigma_j\in\RR,
$$
at each boundary hypersurface
$\hsf_j$, $j=1,\ldots,k$, of $X$ intersecting $\cU$, defined by
\begin{equation*}
\hat N_{\hsf_j}(A)(\sigma_j)f=(x_j^{-i\sigma_j}Ax_j^{i\sigma_j}u)|_{\hsf_j},
\ u|_{\hsf_j}=f,
\end{equation*}
where $x_j^{-i\sigma_j}Ax_j^{i\sigma_j}\in\Psibc^s(X)$, hence
$x_j^{-i\sigma_j}Ax_j^{i\sigma_j}u\in\Cinf(X;\Lambda X)$, and the
right hand side does not depend on the choice of $u$. This captures
the behavior of $A$ at $\hsf_j$ in that
$$
(\forall\sigma_j\in\RR)\ \hat N_{\hsf_j}(A)(\sigma_j)=0\Rightarrow
A\in x_j\Psibc^s(X;\Lambda X).
$$
We refer to \cite{Melrose:Atiyah} and \cite{Melrose-Piazza:Analytic}
for more details.

In general, $A\in\Psibc^s(X;\Lambda X)$ does not preserve
$\CI_R(X;\Lambda X)$; for this to happen we need that
for all $j$ and all $\sigma_j$,
\begin{equation}\label{eq:preserve-normal-forms}
\hat N_{\hsf_j}(A)(\sigma_j):
\CI(\hsf_j;\Lambda_{\hsf_j,N}X)
\to \CI(\hsf_j;\Lambda_{\hsf_j,N}X),
\end{equation}
where $\Lambda_{\hsf_j,N}X$ denotes the bundle of normal forms at $\hsf_j$.
If $s\geq 0$, then
in addition $A\in\cL(H^1(X;\Lambda X))$, and thus (recalling that
$\CI_R(X;\Lambda X)$ is dense in $H^1_R(X;\Lambda X)$ in the
$H^1$-norm) $A\in\cL(H^1_R(X;\Lambda X))$.
In particular,
if $A$ is scalar with respect to the coordinate trivialization, then
$\hat N_{\hsf_j}(A)$ satisfies \eqref{eq:preserve-normal-forms}
for each $j$, and it follows that
$A:\CI_R(X;\Lambda X)\to\CI_R(X;\Lambda X)$, and the
corresponding mapping property on $H^1_R(X;\Lambda X)$ also holds for
$s\geq 0$.

We are now ready to state our main commutator result. Recall that the topology
on $\Psibc^s(X;\Lambda X)$ is given by conormal (Besov) seminorms on the
Schwartz kernel, or
equivalently by symbol seminorms (which capture the near diagonal
behavior) combined with $\CI$ seminorms.

\begin{prop}\label{prop:gend-comm}
Let $A_0\in\Psib^0(X)$ with $\sigma_{\bl,0}(A_0)=a$, supported
in the coordinate chart as in Lemma~\ref{lemma:Box-form}, and let
$A=A_0\otimes\Id$ with respect to the trivialization.
Also let $s\in\RR$,
$\Lambda_r$ be scalar, with symbol
\begin{equation}
w_r=|\zeta_{n-k}|^{s+1/2}(1+r|\zeta_{n-k}|^2)^{-s}\,\Id,\quad r\in[0,1),
\end{equation}
so $A_r=A\Lambda_r\in\Psib^{0}(X;\Lambda X)$ for $r>0$ and the family
$\cA=\{A_r:\ r\in(0,1)\}$ is
uniformly bounded in $\Psibc^{s+1/2}(X;\Lambda X)$. Then, for
$P$ as in \eqref{eq:P-form}, and with $A_r^\dagger$
as in \eqref{eq:A-dagger-def}, we have
\begin{equation}\begin{split}\label{eq:gend-commutator}
&\imath(A^\dagger_r A_r)^* P-\imath P A^\dagger_r A_r =
Q_i^* C_{r,ij}Q_j+Q_i^* C'_{r,i}+C''_{r,j}Q_j+C_{r,0}+F_r,\\
&\sigma_{\bl,2s}(C_{r,ij})=2w_r^2 (a V_{ij} a\,\Id_{\Lambda X}+a A_{ij}
\tilde V a
 + a^2 \tilde c_{r,ij}),\\
&\sigma_{\bl,2s+1}(C'_{r,i})=\sigma_{\bl,2s}(C''_{r,i})
=2w_r^2 (a V_{i} a\,\Id_{\Lambda X}+a m_i\tilde V a
 + a^2 \tilde c_{r,i}),\\
&\sigma_{\bl,2s+2}(C_{r,0})=2w_r^2 (a V_{0} a\,\Id_{\Lambda X}
+a \tilde p\tilde V a
 + a^2 \tilde c_{r,0}),\\
&\tilde c_{r,ij}\in L^\infty
((0,1]_r;S^{-1}(T^*X\setminus o;\End(\Lambda X))\\
&\tilde c_{r,i}\in L^\infty
((0,1]_r;S^{0}(T^*X\setminus o;\End(\Lambda X)),\ \tilde c_{r,0}\in L^\infty
((0,1]_r;S^{1}(T^*X\setminus o;\End(\Lambda X))\\
&V_{ij}\in\Vf(T^*X\setminus o),\ V_{i}\in\Vf(T^*X\setminus o),\\
&V_{0}\in\Vf(T^*X\setminus o),
\ \tilde V\in\Vf(T^*X\setminus o;\End(\Lambda X)),\\
&F_r\in L^\infty((0,1]_r;\Diffs^2\Psib^{2s-1}(X;\Lambda X)),
\end{split}\end{equation}
$\tilde V,V_{ij},V_i, V_0$
smooth homogeneous of degree $-1,-1,0,1$ respectively, $\tilde V$ is
vertical
and annihilates $\sigma_i$ at $x_i=0$.
Moreover,
\begin{equation}\begin{split}\label{eq:V_ij-form}
V_{ij}|_F=-A_{ij}(\pa_{\sigma_i}+\pa_{\sigma_j})
+\sum_k (\pa_{y_k} A_{ij})\pa_{\zeta_k},\\
V_i|_F=-A_{ij}\pa_{x_j},\ V_0|_F=-\sH_{\bl,\tilde p}.
\end{split}\end{equation}
\end{prop}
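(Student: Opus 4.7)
The plan is to split
$$\imath(A_r^\dagger A_r)^* P - \imath P A_r^\dagger A_r = \imath(B_r^* - B_r)P + \imath[B_r, P], \qquad B_r := A_r^\dagger A_r,$$
and treat the two pieces separately. Because $A$ and $\Lambda_r$ are both scalar with respect to the trivialization, $B_r$ is also scalar, with principal symbol $w_r^2 a^2$; the commutator $\imath[B_r, P]$ produces the standard b-calculus contributions, while $\imath(B_r^* - B_r)P$ captures the non-scalar defect that arises because a trivialization-scalar operator fails to be self-adjoint with respect to $\tilde H$, exactly the phenomenon emphasized in the discussion preceding Proposition~\ref{prop:adjoints}.

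For the commutator term I would substitute the decomposition of $P$ from Lemma~\ref{lemma:Box-form}, together with the perturbation of \eqref{eq:P-form} (absorbing $P_1$ into $F_r$ as its commutators are of strictly lower order), and expand by Leibniz, for example
$$[B_r, Q_i^* A_{ij} Q_j] = [B_r, Q_i^*] A_{ij} Q_j + Q_i^* [B_r, A_{ij}] Q_j + Q_i^* A_{ij} [B_r, Q_j].$$
Using \eqref{eq:A-At-Bt} and Lemma~\ref{lemma:comm-symbol} I would rewrite $[B_r, Q_j] = -(B_r)^{(j,1)} Q_j - (B_r)^{(j,0)}$ with b-pseudodifferential coefficients of principal symbols $\tfrac{1}{\imath}\pa_{\sigma_j}(w_r^2 a^2)$ and $\tfrac{1}{\imath}\pa_{x_j}(w_r^2 a^2)$ respectively, and similarly for $[B_r, Q_i^*]$. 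The $A_{ij}\pa_{\sigma_i}$- and $A_{ij}\pa_{\sigma_j}$-derivatives so produced, combined with $\sH_{\bl,w_r^2 a^2}(A_{ij})$ from the middle commutator, assemble at $F$ into $2w_r^2 a\, V_{ij} a$ with $V_{ij}|_F$ as in \eqref{eq:V_ij-form}; note that at $F$ the $x\pa_x$-components of $\sH_{\bl}$ drop out since $x_j = 0$ and $A_{ij}$ is independent of $\sigma, \zeta$, leaving exactly the $\sum_k (\pa_{y_k}A_{ij})\pa_{\zeta_k}$ term. The $(B_r)^{(j,0)}$-pieces, which lack a trailing $Q_j$, produce the $V_i|_F = -A_{ij}\pa_{x_j}$ contributions in $C'_{r,i}$ and (symmetrically) in $C''_{r,j}$. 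Since $m_i|_F = 0$, the $M_i Q_i$ and $Q_i^* M'_i$ commutators contribute nothing to the leading symbol at $F$ and are swept into $\tilde c_{r,\cdot}$; the commutator with $\tilde P$ analogously yields $V_0|_F = -\sH_{\bl,\tilde p}$.

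For the adjoint-correction term I would apply Proposition~\ref{prop:adjoints} to the scalar operator $B_r$, which gives $B_r^* - B_r = C \in \Psib^{2s}(X;\Lambda X)$ with matrix principal symbol
$$\sigma_{\bl,2s}(C)_{\alpha\beta} = V_{\alpha\beta}(w_r^2 a^2) = 2 w_r^2 a\, V_{\alpha\beta}(a) + a^2 V_{\alpha\beta}(w_r^2),$$
where the $V_{\alpha\beta}$ are vertical vector fields on $\Tb^*X$ annihilating $\sigma_j$ at $x_j = 0$. Writing $\tilde V$ for such a generic vertical vector field and composing with $P$ (in the form of Lemma~\ref{lemma:Box-form}) brings the factors $A_{ij}$, $m_i$, $\tilde p$ next to $\tilde V a$, producing precisely the $a A_{ij}\tilde V a$, $a m_i \tilde V a$, and $a\tilde p\tilde V a$ contributions in the stated formulas; the $a^2 V_{\alpha\beta}(w_r^2)$ piece is absorbed into the $\tilde c_{r,\cdot}$ remainders, which remain uniformly bounded in the claimed symbol classes because $\{w_r : r \in (0,1]\}$ is uniformly bounded in $S^{s+1/2}(\RR)$. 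The main technical obstacle is purely bookkeeping: one must verify that every residue produced by the Leibniz expansions, by Lemma~\ref{lemma:comm-symbol}, and by the rearrangements via \eqref{eq:A-At-Bt} falls into $\Diffs^2\Psib^{2s-1}(X;\Lambda X)$ uniformly in $r$. This rests on the fact that each commutation with $D_{x_j}$ drops the pseudodifferential order by one while introducing at most two $Q$-factors on the outside (so the output stays inside $\Diffs^2\Psib$), together with the uniform $\Psibc^{s+1/2}$-boundedness of the regularizing family $\{\Lambda_r\}$.
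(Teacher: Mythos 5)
Your proposal is correct and follows essentially the same route as the paper: there, too, the result is obtained by combining the adjoint defect $(A_r^\dagger A_r)^*-A_r^\dagger A_r$ computed in Proposition~\ref{prop:adjoints} with commutator expansions of the terms of Lemma~\ref{lemma:Box-form} via Lemma~\ref{lemma:comm-symbol}, separating the terms in which $a$ is differentiated (yielding $V_{ij},V_i,V_0$ and the vertical $\tilde V$) from those in which only the weight $w_r$ is hit (the $a^2\tilde c_{r,\cdot}$ terms), and using $m_i|_F=0$ exactly as you do. The only cosmetic difference is that the paper keeps the adjoint correction sandwiched between the differential factors, via the identity $[Q_i,B]^*A_{ij}Q_j-Q_i^*A_{ij}[Q_j,B]+Q_i^*\big(B^*A_{ij}-A_{ij}B\big)Q_j$ with $B=A_r^\dagger A_r$, rather than splitting off $(B^*-B)P$ globally as you do (which costs one extra, harmless, commutation past $Q_i^*$); likewise your remark about absorbing $P_1$ is best phrased as absorbing its contributions into the sub-leading parts of the $C$-terms, a point the paper also leaves implicit.
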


\begin{proof}
We use \eqref{eq:Box-form} and $[B^*,Q_i^*]=B^*Q_i^*-Q_i^*B^*=[Q_i,B]^*$, etc.
For instance,
\begin{equation}\begin{split}\label{eq:gend-commutator-1a}
&(A^\dagger_r A_r)^* Q_i^* A_{ij}Q_j-Q_i^*A_{ij}Q_j A^\dagger_r A_r \\
&=[Q_i,A^\dagger_rA_r]^*A_{ij}Q_j\\
&\qquad-Q_i^*A_{ij}[Q_j,A^\dagger_r A_r]
+Q_i^* \Big((A^\dagger_r A_r)^*A_{ij}-A_{ij}A^\dagger A \Big)Q_j,
\end{split}\end{equation}
and
\begin{equation*}
(A^\dagger_r A_r)^*A_{ij}-A_{ij}A^\dagger_r A_r=((A^\dagger_r A_r)^*
-(A^\dagger_r A_r))A_{ij}+[A^\dagger_r A_r,A_{ij}].
\end{equation*}
We have already calculated $(A^\dagger_r A_r)^*-(A^\dagger_r A_r)$,
including its principal symbol, in Proposition~\ref{prop:adjoints},
while the principal symbol of
$[A^\dagger_r A_r,A_{ij}]\in\Psibc^{2s}(X)$ can be computed
in $\Psibc^{2s}(X)$. Further, $[Q_j,A^\dagger_r A_r]$ can be computed
using Lemma~\ref{lemma:comm-symbol}. Note that in all these
terms the principal symbol is given by applying a vector field
to $a|\zeta_{n-k}|^{s+1/2}(1+r|\zeta_{n-k}|^2)^{-s}$.
When $|\zeta_{n-k}|^{s+1/2}(1+r|\zeta_{n-k}|^2)^{-s}$
is differentiated, $a$ is not differentiated, hence it contributes to
the term $a^2\tilde q_r$. Thus, we need to collect the terms in which
$a$ is differentiated. Apart from the contribution of
$(A^\dagger_r A_r)^*-(A^\dagger_r A_r)$, these are all principally scalar.
The contribution of $(A^\dagger_r A_r)^*-(A^\dagger_r A_r)$ gives
rise to the $\tilde V$ terms; the verticality and the property
of annihilating $\sigma_i$ follow from
Proposition~\ref{prop:adjoints}. The other terms in which $a$ is
differentiated give $V_{ij},V_i$ and $V_0$. In particular, the
$V_{ij}$ arises from
the $A_1$ term in Lemma~\ref{lemma:comm-symbol} when either
$Q_i^*$ or $Q_j$ in $Q_i^* A_{ij} Q_j$ is commuted with $A^\dagger A$
as well as when $A_{ij}$ is commuted with $A^\dagger A$; these give
rise to the three terms for $V_{ij}|_F$ in \eqref{eq:V_ij-form}
respectively.
The $V_i$ term arises both from the $A_0$ term in
Lemma~\ref{lemma:comm-symbol} when either
$Q_i^*$ or $Q_j$ in $Q_i^* A_{ij} Q_j$ is commuted with $A^\dagger A$
(which gives $V_i|_F$ in \eqref{eq:V_ij-form}),
as well as the $A_1$ term in Lemma~\ref{lemma:comm-symbol} when
$Q_i^*$ in $Q_i^* M_i'$ or $Q_i$ in $M_i Q_i$ is commuted with $A^\dagger A$,
as well as when $M_i$ or $M_i'$ is commuted with $A^\dagger A$: note that
all but the first of these have vanishing principal symbol at $F$
as $m_i|_F=0$.
Finally, the $V_0$ term arises from the $A_0$ term in
Lemma~\ref{lemma:comm-symbol} when
$Q_i^*$ in $Q_i^* M_i'$ or $Q_i$ in $M_i Q_i$ is commuted with $A^\dagger A$,
as well as when $\tilde P$ is commuted with $A^\dagger A$: all but the last
of these have vanishing principal symbol at $F$ as $m_i|_F=0$.
\end{proof}

\section{Elliptic estimates}\label{sec:elliptic}
We collect here the elliptic estimates from \cite{Vasy:Propagation-Wave},
whose validity for forms with natural boundary conditions was
discussed in \cite{Vasy:Diffraction-edges}, though they were
not all stated as explicit lemmas there. Recall that
$$
\langle.,.\rangle
=\langle.,.\rangle_{L^2(X,\Lambda X;|dh|\otimes\tilde H)}
$$
is a positive definite inner product, and $\langle.,.\rangle_H$ is the
metric inner product with respect to $H$.
We also let
\begin{equation}\label{eq:twisted-inner-product}
\langle.,.\rangle_{\tilde H\otimes H}
=\langle.,.\rangle_{L^2(X,\Lambda X\otimes T^*X;|dh|\otimes \tilde H
\otimes H)}
=\int(u,v)_{\tilde H\otimes H}\,|dh|,
\end{equation}
where
$(u,v)_{\tilde H\otimes H}$ is the inner product on
$\Lambda X\otimes T^*X$ with the $T^*X$ inner product given by
$H$ and the $\Lambda X$ inner product given by $\tilde H$ -- cf.\ the
twisted Dirichlet form in \cite[Equation (28)]{Vasy:Diffraction-edges}.

We recall the convention for `local norms' from
\cite[Remark~4.1]{Vasy:Propagation-Wave}:

\begin{rem}\label{rem:localize}
Since $X$ is non-compact and our results are microlocal, we may always
fix a compact set $\tilde K\subset X$ and assume that all ps.d.o's
have Schwartz kernel supported in $\tilde K\times\tilde K$. We
also let $\tilde U$ be a neighborhood of $\tilde K$ in $X$ such that
$\tilde U$ has compact closure, and use the $H^1(\tilde U)$ norm
in place of the $H^1(X)$ norm to accommodate $u\in H^1_{\loc}(X)$.
(We may instead take $\phi\in\Cinf_\compl(\tilde U)$ identically $1$ in a
neighborhood of $\tilde K$, and use $\|\phi u\|_{H^1(X)}$.)
Here we use the notation $\|.\|_{H^1_{\loc}(X;\Lambda X)}$ for
$\|.\|_{H^1(\tilde U;\Lambda X)}$ to avoid having to specify $\tilde U$;
indeed we usually drop $(X;\Lambda X)$ and the subscript $R$ as well.
We also use $\|v\|_{\dot H^{-1}_{R,\loc}(X;\Lambda X)}$ for
$\|\phi v\|_{\dot H^{-1}_R(X;\Lambda X)}$.
\end{rem}

For all the estimates in this section, namely
Lemmas~\ref{lemma:Dirichlet-form}, \ref{lemma:Dirichlet-form-2} and
\ref{lemma:Dt-Dx},
we fix a coordinate chart,
the corresponding trivialization of $\Lambda X$, and let
$\nabla$ be the connection on $\Lambda X$ given by the trivialization, so
\begin{equation*}
\nabla\in\Diff^1(X;\Lambda X;\Lambda X\otimes T^*X),
\end{equation*}
and
\begin{equation}\label{eq:nabla-symbol}
\sigma_1(\nabla)(w,\tilde\xi)=\imath\Id\otimes\tilde\xi\in\End(\Lambda_w X;
\Lambda_w X\otimes T^*_wX),\ (w,\tilde\xi)\in T^*X\setminus o.
\end{equation}

First the basic estimate
on the Dirichlet form is:

\begin{lemma}(cf.\ \cite[Lemma~4.2]{Vasy:Propagation-Wave}; see
\cite[Section~5]{Vasy:Diffraction-edges} for how the proof
of \cite[Lemma~4.2]{Vasy:Propagation-Wave} needs to be modified in this
case.)
\label{lemma:Dirichlet-form}
Suppose that $P$ is as in \eqref{eq:P-form}.
Suppose that $K\subset\Sb^*X$ is compact, $U\subset\Sb^*_{\cU}X$ is open,
$K\subset U$, $\overline{\cU}\subset\cU_0$.
Suppose that $\cA=\{A_r:\ r\in(0,1]\}$ is a bounded
family of scalar ps.d.o's in $\Psibc^s(X;\Lambda X)$ with
$\WFb'(\cA)\subset K$, and
with $A_r\in\Psib^{s-1}(X;\Lambda X)$ for $r\in (0,1]$.
Then there are $G\in\Psib^{s-1/2}(X;\Lambda X)$,
$\tilde G\in\Psib^{s+1/2}(X;\Lambda X)$ scalar
with $\WFb'(G),\WFb'(\tilde G)\subset U$
and $C_0>0$ such that for $r\in(0,1]$, $u\in H^1_{R,\loc}(X;\Lambda X)$
with $\WFb^{1,s-1/2}(u)
\cap U=\emptyset$, $\WFb^{-1,s+1/2}(Pu)\cap U=\emptyset$, we have
\begin{equation}\begin{split}\label{eq:Dirichlet-form}
&|\langle \nabla A_r u, \nabla A_r u\rangle_{\tilde H\otimes H}|\\
&\qquad\leq
C_0\big(\|u\|^2_{H^1_{\loc}}+\|Gu\|^2_{H^1}+\|Pu\|^2_{\dot H^{-1}_{R,\loc}}
+\|\tilde G Pu\|^2_{\dot H^{-1}_R}\big).
\end{split}\end{equation}
\end{lemma}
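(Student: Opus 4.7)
The plan is to derive a Green-type identity expressing the quadratic form $\langle\nabla A_r u,\nabla A_r u\rangle_{\tilde H\otimes H}$ as $\langle Pu,A_r^\dagger A_r u\rangle$ plus a remainder $R_r$ built from lower-order commutators, and then bound both sides separately. The test function $v=A_r^\dagger A_r u$ is a legitimate element of $H^1_{R,\compl}(X;\Lambda X)$ because $A_r$ is scalar with respect to the coordinate trivialization and hence (by the discussion around \eqref{eq:preserve-normal-forms}) preserves the relative boundary condition; the hypothesis $\WFb^{1,s-1/2}(u)\cap U=\emptyset$ furnishes the regularity needed to make all quantities finite uniformly in $r\in(0,1]$.

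To produce the identity I would plug the $\Diffs^2$ decomposition of $\Box$ from Lemma~\ref{lemma:Box-form} into the pairing, with $P_1$ absorbed as a first-order perturbation. The leading contribution is $\sum_{i,j}\langle A_{ij}Q_j u,Q_iA_r^\dagger A_r u\rangle$; using \eqref{eq:V-Psib-1}--\eqref{eq:A-At-Bt} I would then commute $A_r^\dagger$ and $A_r$ through each $Q_i$, exploiting that $Q_iB-\tilde B Q_i\in\Psib$ with $\tilde B$ having the same principal symbol as $B$. After rearrangement the leading quadratic form becomes $\sum_{i,j}\langle A_{ij}Q_j A_r u, Q_iA_r u\rangle$, and combining it with the analogous rearrangement of $\tilde P$ (which supplies the tangential $B_{ij}\zeta_i\zeta_j$-block, plus cross $C_{ij}$-terms vanishing at $F$) and of $M_i Q_i+Q_i^*M'_i$ (subprincipal because $m_i|_F=0$), one recognizes the result as $\langle\nabla A_r u,\nabla A_r u\rangle_{\tilde H\otimes H}$ in view of \eqref{eq:p-at-corner2} and the definition \eqref{eq:twisted-inner-product}.

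The bounds then proceed as follows. Cauchy--Schwarz in the $\dot H^{-1}_R$--$H^1_R$ duality yields $|\langle Pu,A_r^\dagger A_r u\rangle|\lesssim \|\tilde GPu\|_{\dot H^{-1}_R}^2+\|Pu\|_{\dot H^{-1}_{R,\loc}}^2+\|A_r^\dagger A_r u\|_{H^1}^2$ after microlocalizing $Pu$ by $\tilde G\in\Psib^{s+1/2}(X;\Lambda X)$ elliptic on $\WFb'(\cA)\subset K$ and supported in $U$. Using the $\WFb^{1,s-1/2}(u)\cap U=\emptyset$ assumption one writes $A_r=A'_rG+F_r$, where $G\in\Psib^{s-1/2}(X;\Lambda X)$ is elliptic on $K$ and supported in $U$, $A'_r$ is uniformly bounded in $\Psibc^1$, and $F_r$ is uniformly smoothing; the continuity of $\Psibc^0$ on $H^1$ recalled after \eqref{eq:Psibc-dist-def} then yields $\|A_r^\dagger A_r u\|_{H^1}\lesssim \|Gu\|_{H^1}+\|u\|_{H^1_\loc}$ uniformly in $r$. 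The same splitting bounds the remainder $R_r$, whose ingredients are uniformly in $\Psibc^{2s}$ paired with at most one $Q_iu$ factor, hence absorbable into $\|Gu\|^2_{H^1}+\|u\|^2_{H^1_\loc}$.

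The principal obstacle is uniformity as $r\to 0$: each $A_r$ is individually in $\Psib^{s-1}$, but the family is only uniformly bounded in the coarser $\Psibc^{s+1/2}$, so every commutation, composition, and $H^1$-boundedness estimate has to be carried out at the $\Psibc$ level rather than the classical $\Psib$ level. A secondary subtlety, already handled by Proposition~\ref{prop:adjoints}, is that $A_r^\dagger$ is not scalar and does not preserve the boundary condition even at the principal-symbol level in the presence of the non-scalar form bundle $\Lambda X$; one has to verify that its deviation from $A_r$ is one order smaller, so that the resulting errors land in the remainder $R_r$ rather than contaminating the leading Dirichlet form.
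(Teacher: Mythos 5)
Your overall skeleton -- use the weak formulation of $P$ with the commutant $A_r^\dagger A_r$, commute through the $Q_i$ via \eqref{eq:A-At-Bt}, recognize the Dirichlet form from Lemma~\ref{lemma:Box-form}, and microlocalize with $G$ and $\tilde G$ -- is indeed the route of the proofs this paper cites (the lemma is not reproved here, only quoted from \cite[Lemma~4.2]{Vasy:Propagation-Wave} with the modifications of \cite[Section~5]{Vasy:Diffraction-edges}). However, there is a genuine gap in your handling of the main pairing term, and it is an order-counting error, not a technicality. You assert that $v=A_r^\dagger A_r u$ is a legitimate element of $H^1_{R,\compl}(X;\Lambda X)$, uniformly in $r$, and later that $\|A_r^\dagger A_r u\|_{H^1}\lesssim \|Gu\|_{H^1}+\|u\|_{H^1_{\loc}}$ uniformly. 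Neither holds: the family $A_r^\dagger A_r$ is only uniformly bounded in $\Psibc^{2s}$ (and for fixed $r$ lies in $\Psib^{2s-2}$), while the hypothesis gives only $\WFb^{1,s-1/2}(u)\cap U=\emptyset$; applying a uniform order $2s$ family to $u$ lands microlocally in $H^{1,-s-1/2}_{\bl,\loc}$, not in $H^1$, as soon as $s>-1/2$, and even for a fixed $r$ one would need $2s-2\leq s-1/2$, i.e.\ $s\leq 3/2$. Concretely, your factorization $A_r=A_r'G+F_r$ with $G\in\Psib^{s-1/2}$ gives $A_r'$ uniformly bounded only in $\Psibc^{1/2}$ (not $\Psibc^{1}$), so $A_r^\dagger A_r'$ is uniformly of order $s+1/2>0$ and is not bounded on $H^1$; the claimed uniform bound on $\|A_r^\dagger A_ru\|_{H^1}$ does not follow and is false in general.

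The budget is exact and must be split, not duplicated: the hypotheses supply $s+1/2$ orders of b-regularity on $Pu$ (relative to $\dot H^{-1}_R$) and $s-1/2$ on $u$ (relative to $H^1$), totalling $2s$, which is precisely the uniform order of $A_r^\dagger A_r$. Your estimate $|\langle Pu,A_r^\dagger A_ru\rangle|\lesssim\|\tilde GPu\|^2_{\dot H^{-1}_R}+\|Pu\|^2_{\dot H^{-1}_{R,\loc}}+\|A_r^\dagger A_ru\|^2_{H^1}$ spends $s+1/2$ on the $Pu$ slot and then still demands the full $2s$ on the $u$ slot, i.e.\ $3s+1/2$ in total. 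The correct argument uses the ellipticity of $\tilde G$ on a neighborhood of $\WFb'(\cA)\subset K$ to write $A_r^\dagger A_r=\tilde G^*C_r+R_r$ with $C_r$ uniformly bounded in $\Psibc^{s-1/2}$ and $R_r$ uniformly residual (choosing $\tilde G$ scalar in the trivialization so that $\tilde G^*$ preserves the relative boundary condition and the pairing with the quotient space $\dot H^{-1}_R$ is legitimate, cf.\ \eqref{eq:preserve-normal-forms} and Definition~\ref{Def:WFb-neg}); then $|\langle Pu,A_r^\dagger A_ru\rangle|\leq\|\tilde GPu\|_{\dot H^{-1}_R}\,\|C_ru\|_{H^1}+\cdots$ with $\|C_ru\|_{H^1}\lesssim\|Gu\|_{H^1}+\|u\|_{H^1_{\loc}}$ uniformly. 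Equivalently -- and this is how the cited proofs proceed -- one never forms $A_r^\dagger A_ru$ as an $H^1$ function at all, but keeps one factor of $A_r$ and one derivative on each slot of the pairing. The same counting slip occurs in your remainder estimate: a term genuinely of uniform order $2s$ paired against $u$ and $Q_iu$ exceeds the available $(s-1/2)+(s-1/2)$ budget; what saves the actual remainder is that every commutator gains one order, so those terms are uniformly of order $2s-1$ and only then can be distributed onto $\|Gu\|^2_{H^1}+\|u\|^2_{H^1_{\loc}}$ (one cannot instead absorb anything into the left-hand side, since $\langle\nabla A_ru,\nabla A_ru\rangle_{\tilde H\otimes H}$ is not a norm, $H$ being Lorentzian).
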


\begin{rem}
It is straightforward to modify this lemma so that
we do not need to assume $U\subset\Sb^*_{\cU}X$ is open,
$K\subset U$, $\overline{\cU}\subset\cU_0$, rather simply
$U\subset\Sb^*X$ is open, and also $A_r$ needs to be merely principally
scalar rather than scalar, and $\nabla$ can be replaced by any first order
differential operator with principal symbol \eqref{eq:nabla-symbol}.
Indeed, we merely need to use a partition of unity and
observe that any new terms introduced by the partition of unity and
the other changes
can be absorbed into
$C_0(\|u\|^2_{H^1_{\loc}}+\|Gu\|^2_{H^1})$, by possibly adjusting
$C_0$ and $G$ (but keeping its properties).
However, as the setting relevant to
our estimates is local, and we choose $A_r$ (which we choose to
be scalar), this is not needed here.
\end{rem}

A slightly strengthened version in terms of the order of $\tilde G$
(corresponding to the right hand side of the equation $Pu=f$) is:

\begin{lemma}(cf.\ \cite[Lemma~4.4]{Vasy:Propagation-Wave}; see
\cite[Section~5]{Vasy:Diffraction-edges} for how the proof
of \cite[Lemma~4.2]{Vasy:Propagation-Wave} needs to be modified in this
case.)
\label{lemma:Dirichlet-form-2}
Suppose that $P$ is as in \eqref{eq:P-form}.
Suppose that $K\subset\Sb^*X$ is compact, $U\subset\Sb^*_{\cU}X$ is open,
$K\subset U$, $\overline{\cU}\subset\cU_0$.
Suppose that $\cA=\{A_r:\ r\in(0,1]\}$ is a bounded
family of scalar ps.d.o's in $\Psibc^s(X;\Lambda X)$ with
$\WFb'(\cA)\subset K$, and
with $A_r\in\Psib^{s-1}(X;\Lambda X)$ for $r\in (0,1]$.
Then there are $G\in\Psib^{s-1/2}(X;\Lambda X)$,
$\tilde G\in\Psib^{s}(X;\Lambda X)$ scalar
with $\WFb'(G),\WFb'(\tilde G)\subset U$
and $C_0>0$ such that for $\ep>0$, $r\in(0,1]$, $u\in H^1_{R,\loc}(X;\Lambda X)$
with $\WFb^{1,s-1/2}(u)
\cap U=\emptyset$, $\WFb^{-1,s}(Pu)\cap U=\emptyset$, we have
\begin{equation*}\begin{split}
|\langle \nabla A_r u, \nabla A_r u\rangle_{\tilde H\otimes H}|\leq
\ep&\|(D_{y_{n-k}}\otimes\Id) A_r u\|^2_{L^2}
+C_0\big(\|u\|^2_{H^1_{\loc}}+\|Gu\|^2_{H^1}\\
&\qquad\qquad+\ep^{-1}\|Pu\|^2_{\dot H^{-1}_{R,\loc}}
+\ep^{-1}\|\tilde G Pu\|^2_{\dot H^{-1}_R}\big).
\end{split}\end{equation*}
\end{lemma}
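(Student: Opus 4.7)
The plan is to follow the proof of Lemma \ref{lemma:Dirichlet-form} verbatim through the commutator expansion of $\langle Pu, A_r^\dagger A_r u\rangle$ into the Dirichlet form, and then sharpen only the final Cauchy--Schwarz step: the extra half-order in $\tilde G$ is bought at the cost of the new $\ep\|(D_{y_{n-k}}\otimes\Id)A_r u\|^2$ term on the right-hand side.

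First I pair $Pu$ with the test function $v_r = A_r^\dagger A_r u$, which lies in $H^1_{R,\compl}(X;\Lambda X)$ since $A_r$ (being scalar relative to the trivialization) and its formal adjoint $A_r^\dagger$ both preserve relative boundary conditions via \eqref{eq:preserve-normal-forms}. Expanding
$P = \sum_{ij} Q_i^* A_{ij} Q_j + \sum_i (M_i Q_i + Q_i^* M_i') + \tilde P + P_1$
by Lemma \ref{lemma:Box-form} and \eqref{eq:P-form}, and commuting $A_r^\dagger A_r$ past the $Q_i^*$ and $Q_j$ using \eqref{eq:A-At-Bt} together with Proposition \ref{prop:adjoints} for the non-self-adjoint piece of $A_r^\dagger A_r$, I obtain
\begin{equation*}
\operatorname{Re}\langle Pu, A_r^\dagger A_r u\rangle = \langle \nabla A_r u, \nabla A_r u\rangle_{\tilde H\otimes H} + E_r,
\end{equation*}
with $E_r$ bounded uniformly in $r\in(0,1]$ by $C(\|u\|^2_{H^1_\loc} + \|Gu\|^2_{H^1})$ for a scalar $G\in\Psib^{s-1/2}(X;\Lambda X)$ with $\WFb'(G)\subset U$. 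This step is identical to the corresponding one in Lemma \ref{lemma:Dirichlet-form}, and the b-wavefront hypothesis on $u$ is precisely what converts the microlocal operator bounds produced by Lemma \ref{lemma:comm-symbol} into norm bounds.

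The novelty is in the Cauchy--Schwarz/Young bound on the pairing on the left. Instead of writing $|\langle Pu, v_r\rangle| \le \|\tilde G_0 Pu\|_{\dot H^{-1}_R} \|\tilde G_0^{-*} v_r\|_{H^1_R}$ with $\tilde G_0$ of order $s+1/2$ (the Lemma \ref{lemma:Dirichlet-form} case), I pick $\tilde G \in \Psib^s(X;\Lambda X)$ scalar with $\WFb'(\tilde G)\subset U$ and elliptic on $\WFb'(\cA)$. Because $\cA$ is uniformly bounded in $\Psibc^s$, the principal symbol of $(\tilde G^*)^{-1} A_r^\dagger A_r$ on $\WFb'(\cA)$ is $\zeta_{n-k}$ times a symbol of uniform order $s$ comparable to that of $A_r$; a standard microlocal parametrix argument then produces
\begin{equation*}
(\tilde G^*)^{-1} A_r^\dagger A_r = (D_{y_{n-k}}\otimes\Id)\,B\,A_r + R_r,
\end{equation*}
with $B\in\Psib^0(X;\Lambda X)$ uniformly bounded, $R_r$ of strictly lower b-order uniformly in $r$, and equality holding modulo an operator whose Schwartz kernel is microlocally supported away from $\WFb'(\cA)$. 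Young's inequality with parameter $\ep$ then gives
\begin{equation*}
|\langle Pu, v_r\rangle| \le \frac{1}{4\ep}\|\tilde G Pu\|^2_{\dot H^{-1}_R} + \ep\,\|(D_{y_{n-k}}\otimes\Id)\,B A_r u\|^2_{L^2} + (\text{absorbable}),
\end{equation*}
and the middle term is controlled by $C\ep\|(D_{y_{n-k}}\otimes\Id) A_r u\|^2_{L^2}$ after commuting $B$ past $D_{y_{n-k}}\otimes\Id$, using the $L^2$-boundedness of $B$ and Lemma \ref{lemma:comm-symbol}. All absorbable errors -- arising from the microlocal cutoff to $\WFb'(\cA)$, the parametrix remainder, and lower-order commutators -- are bounded uniformly in $r$ by $C_0(\|u\|^2_{H^1_\loc} + \|Gu\|^2_{H^1} + \ep^{-1}\|Pu\|^2_{\dot H^{-1}_{R,\loc}})$ after possibly enlarging $G$ and $C_0$.

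The main technical obstacle is the uniform-in-$r$ bookkeeping of the commutator and adjoint errors, in particular ensuring that the non-scalar part of $A_r^\dagger A_r$ produced by Proposition \ref{prop:adjoints} appears at low enough order to be absorbed rather than to contribute at the principal symbolic level. As in Lemma \ref{lemma:Dirichlet-form}, this is done by keeping the $Q_i$ factors on the outside throughout the commutator computations and invoking the uniform $\Psibc^s$-seminorm bounds on $\cA$, which via \cite[Lemma~3.2]{Vasy:Propagation-Wave} and the subsequent discussion give uniform operator norms in the $H^1_R$--$\dot H^{-1}_R$ pairing underlying Young's inequality.
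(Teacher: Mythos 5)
Your overall skeleton (repeat the proof of Lemma~\ref{lemma:Dirichlet-form} and change only the treatment of the critical pairing term) is the right one, and it is what the cited source does; but the mechanism you propose for producing the term $\ep\|(D_{y_{n-k}}\otimes\Id)A_ru\|^2_{L^2}$ does not work, and the actual key idea is missing. In the intended argument one estimates the pairing by
$|\langle \tilde G Pu,\,w_r\rangle|\leq \ep^{-1}\|\tilde G Pu\|^2_{\dot H^{-1}_R}+\gamma\|A_r u\|^2_{H^1}$ (plus commutator errors), and then uses the specific choice \eqref{eq:tilde-H-def}, $\tilde H=-H+2\pa_{y_{n-k}}^2$, which gives pointwise
$\|\nabla A_r u\|^2_{\tilde H\otimes\tilde H}\leq |\langle\nabla A_r u,\nabla A_r u\rangle_{\tilde H\otimes H}|+2\|(D_{y_{n-k}}\otimes\Id)A_r u\|^2_{L^2}$;
the Dirichlet-form part is absorbed into the left-hand side and the $D_{y_{n-k}}$ part appears with the small weight. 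This absorption step is the entire content of the strengthening (it is why $\tilde H$ was chosen as in \eqref{eq:tilde-H-def}), and your write-up never invokes it.

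Your substitute, the factorization $(\tilde G^*)^{-1}A_r^\dagger A_r=(D_{y_{n-k}}\otimes\Id)BA_r+R_r$, fails on three counts. First, the orders do not match: with $\cA$ bounded in $\Psibc^{s}$ and $\tilde G\in\Psib^{s}$ elliptic, $(\tilde G^*)^{-1}A_r^\dagger A_r$ is uniformly of order $s$, so its principal symbol cannot be ``$\zeta_{n-k}$ times a symbol of uniform order $s$'', and it cannot equal $(D_{y_{n-k}}\otimes\Id)BA_r$ with $B\in\Psib^0$ modulo lower order (you appear to be using the normalization of the later applications, where $A_r$ is bounded in $\Psibc^{s+1/2}$). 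Second, any such division by $\zeta_{n-k}$ requires $\zeta_{n-k}\neq 0$ on $\WFb'(\cA)$, which is not a hypothesis of the lemma (the compact set $K$ is arbitrary). Third, and most seriously, the duality bookkeeping is wrong: after moving $\tilde G$ onto $Pu$ the remaining factor is paired against an element of $\dot H^{-1}_R$, so Young's inequality produces its $H^1_R$-norm, not the $L^2$-norm you write. The $H^1$-norm of a uniform order-$s$ microlocalization of $u$ contains the $Q_i$- and tangential derivatives at order $s$, which are exactly half an order beyond what $\WFb^{1,s-1/2}(u)\cap U=\emptyset$ controls; they cannot be bounded by $\|(D_{y_{n-k}}\otimes\Id)A_ru\|_{L^2}$ plus background (that would essentially be the content of Lemma~\ref{lemma:Dt-Dx}, which itself requires the present lemma and the PDE). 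In the genuine proof these uncontrolled derivative terms are precisely what gets absorbed back into the Dirichlet form via \eqref{eq:tilde-H-def}; without that step your argument cannot close.
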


We then recall the statement of microlocal elliptic regularity from
\cite{Vasy:Diffraction-edges}:

\begin{prop}\label{prop:elliptic}(Microlocal elliptic regularity,
see \cite[Theorem~9]{Vasy:Diffraction-edges}.)
Suppose that $P$ is as in \eqref{eq:P-form}, $m\in\RR$ or $m=\infty$.
Suppose $u\in H^1_{R,\loc}(X;\Lambda X)$. Then
\begin{equation*}
\WFb^{1,m}(u)\setminus\dot\Sigma\subset \WFb^{-1,m}(Pu).
\end{equation*}
\end{prop}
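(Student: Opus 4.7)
The plan is to work locally near the base point of $q_0$, using the model form of $\Box$ from Lemma~\ref{lemma:Box-form} together with the scalar-commutant machinery built up in Proposition~\ref{prop:adjoints} and Proposition~\ref{prop:gend-comm}. First I reduce: away from $\pa X$ the result is classical elliptic regularity in $\Psi$, so assume $q_0=(0,y_0,0,\zeta_0)\in\Tb^*_F X$ over the interior of a codimension $k$ face $F$. By \eqref{eq:dot-Sigma-coords2}, $q_0\notin\dot\Sigma$ means $\tilde p(q_0)=\zeta_0\cdot B(y_0)\zeta_0<0$, hence on a small conic neighborhood $U\subset\Sb^*X$ of $q_0$ one has $\tilde p\leq -c<0$. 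Fix coordinates and the distinguished trivialization of $\Lambda X$ in which the relative boundary condition amounts to the vanishing of a fixed subset of components, so that operators which are scalar (diagonal) in this trivialization act on $H^1_R(X;\Lambda X)$, and $P$ has the form \eqref{eq:Box-form}.

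I proceed by inductively upgrading the b-regularity order $s$, starting from $s=-1/2$ (the hypothesis $u\in H^1_{R,\loc}$) and moving up in steps of $1/2$ until reaching $m$. At each step choose the regularizing family $A_r=A\Lambda_r$ as in Proposition~\ref{prop:gend-comm}, with $A\in\Psib^0(X;\Lambda X)$ scalar, microlocalized in $U$, and elliptic at $q_0$; being diagonal in the distinguished trivialization, $A_r$ preserves $H^1_R$, so $A_r^\dagger A_r u$ is a legal test element in the weak formulation of $Pu=f$. Pair $Pu$ with $A_r^\dagger A_r u$ and apply the Dirichlet-form identity of Lemma~\ref{lemma:Dirichlet-form} (or Lemma~\ref{lemma:Dirichlet-form-2}) together with Lemma~\ref{lemma:Box-form}. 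This splits the result into a tangential term $\langle \tilde P u,A_r^\dagger A_r u\rangle$, a normal term $\sum_{ij}\langle A_{ij}Q_j u,Q_i A_r^\dagger A_r u\rangle$, crossterms involving $M_i, M_i'$ (with $m_i|_F=0$, hence lower order at $F$), and inner-product correction terms controlled by Proposition~\ref{prop:adjoints}.

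The heart of the argument is the extraction of positivity at $q_0$. The tangential part is fully within $\Diffb^2\Psib^{2s}$, and the strict negativity of $\tilde p$ on $U$ gives, by a sharp-Gårding / Cauchy--Schwarz estimate applied to $\tilde P A_r u$ paired with $A_r u$, a lower bound of the form $c'\|A_r u\|^2_{H^1_{\loc}}$ modulo inductively controlled remainders. Simultaneously, the negative definiteness of $A_{ij}(0,y_0)$ (the timelike hypothesis on $F$) makes the normal quadratic form $-\sum\langle A_{ij}Q_j A_r u,Q_i A_r u\rangle$ nonnegative and gives coercive control of the $Q_i A_r u$ contributions through $\langle\nabla A_r u,\nabla A_r u\rangle_{\tilde H\otimes H}$. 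Combining the Dirichlet form identity with these two positivities yields a uniform-in-$r$ bound on $\|A_r u\|_{H^1}$ by $\|Pu\|_{\dot H^{-1}_R}$ and inductively available norms on $u$; letting $r\to 0$ gives $Au\in H^1_{\loc}$, i.e.\ $q_0\notin\WFb^{1,s+1/2}(u)$, completing the induction.

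The main obstacle is not the ellipticity per se but the boundary-condition bookkeeping: the commutant must genuinely preserve $H^1_R$, not merely at the principal symbol level, which is exactly why the diagonalizing trivialization is used. A related subtlety is that, as highlighted in Proposition~\ref{prop:adjoints} and Remark~\ref{rem:subpr-adj-dep}, the difference $A_r^\dagger A_r-(A_r^\dagger A_r)^*$ is only one order lower but its principal symbol does not preserve the subbundle of boundary-vanishing components; however, the vector fields $V_{\alpha\beta}$ appearing there are vertical and annihilate $\sigma_i$ at $x_i=0$, so the resulting error pairs with $Pu$ in a way that is absorbed by the inductive hypothesis, and does not spoil the Gårding lower bound.
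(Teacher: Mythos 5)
Your overall framework (work in the distinguished trivialization so that the scalar commutant genuinely preserves $H^1_R$, pair $Pu$ against $A_r^\dagger A_r u$, use Lemma~\ref{lemma:Box-form} together with the Dirichlet-form bound of Lemma~\ref{lemma:Dirichlet-form}, and observe that the adjoint/commutator corrections are one order below the main elliptic term, hence harmless here) is consistent with how this result is actually established -- note, though, that the paper does not reprove the proposition at all: it quotes \cite[Theorem~9]{Vasy:Diffraction-edges}, with the justification given in Section~\ref{sec:idea} that the existence of diagonal (trivialization-compatible) b-ps.d.o's lets the elliptic arguments of \cite{Vasy:Propagation-Wave} go through unchanged.

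However, your case analysis of the elliptic region has a genuine gap. Over the interior of a face $F$, \eqref{eq:dot-Sigma-coords2} gives $\dot\Sigma\cap\Tb^*_FX=\{(0,y,0,\zeta):\ \zeta\cdot B(y)\zeta\geq 0,\ \zeta\neq 0\}$, so $q_0\in\Tb^*_FX\setminus\dot\Sigma$ does \emph{not} force $\tilde p(q_0)<0$: the alternative is $\hat\sigma(q_0)\neq 0$, in which case $\zeta_0\cdot B(y_0)\zeta_0$ may well be $\geq 0$ (for instance the points lying over hyperbolic points with nonvanishing $\hat\sigma$, which are precisely where elliptic regularity is invoked in the propagation arguments, cf.\ the $E'$ terms and the $\chi_2$ cutoff in Sections~\ref{sec:normal}--\ref{sec:tangential}). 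By writing $q_0=(0,y_0,0,\zeta_0)$ you have silently restricted to $\hat\sigma(q_0)=0$, and your positivity mechanism -- strict negativity of $\tilde p$ on $U$ yielding a tangential G\aa rding lower bound -- is simply unavailable in the omitted case. There the argument of the cited references runs differently: one microlocalizes where $|\hat\sigma_j|\geq c>0$ for some $j$ and localizes to $|x|\leq\delta$, so that every tangential first-order b-derivative of $A_ru$ is controlled by $\|x_jD_{x_j}A_ru\|\leq\delta\|Q_jA_ru\|$ plus microlocally trivial errors; consequently $\langle\tilde P A_ru,A_ru\rangle$, whatever its sign, is $O(\delta^2)\sum_i\|Q_iA_ru\|^2$ and can be absorbed into the coercive normal term supplied by the negative definiteness of $A_{ij}$, after which the full $H^1$ norm of $A_ru$ is recovered by the same smallness. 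Without this second case your argument establishes the inclusion $\WFb^{1,m}(u)\setminus\dot\Sigma\subset\WFb^{-1,m}(Pu)$ only on the part of the elliptic set where $\hat\sigma=0$, not the proposition as stated.
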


We also need a result giving more precise control of $\|Q_i A_r u\|$.
Given Lemma~\ref{lemma:Dirichlet-form}, the proof proceeds exactly
as in \cite[Lemma~7.1]{Vasy:Propagation-Wave}, namely Equation~(7.2)
follows as there, and from that point the argument is a b-ps.d.o.\ argument
(rather than a $\Diffs\Psib$ argument), and is unaffected by the boundary
conditions.

\begin{lemma}(cf.\ \cite[Lemma~7.1]{Vasy:Propagation-Wave})\label{lemma:Dt-Dx}
Suppose that $P$ is as in \eqref{eq:P-form}.
Suppose $u\in H^1_{R,\loc}(X;\Lambda X)$,
and suppose that we are given
$K\subset\Sb^*_{\cU}X$ compact, $\overline{\cU}\subset\cU_0$, satisfying
\begin{equation*}
K\subset\cG\cap \Sb^*_{F_{\reg}}X\setminus\WFb^{-1,s+1/2}(Pu).
\end{equation*}
Then
there exist $\delta_0>0$ and $C_{\cG,K}>0$ with the following property.
Let $\delta<\delta_0$,
$U\subset\Sb^*X$ open in a $\delta$-neighborhood of $K$,
and $\cA=\{A_r:\ r\in(0,1]\}$ be a bounded
family of scalar
ps.d.o's in $\Psibc^s(X;\Lambda X)$ with $\WFb'(\cA)\subset U$, and
with $A_r\in\Psib^{s-1}(X;\Lambda X)$ for $r\in (0,1]$.

Then
there exist
$$
G\in\Psib^{s-1/2}(X;\Lambda X),\ \tilde G\in\Psib^{s+1/2}(X;\Lambda X)
$$
scalar with $\WFb'(G),\WFb'(\tilde G)\subset U$ and $\tilde C_0
=\tilde C_0(\delta)>0$ such that
for all $r>0$,
\begin{equation*}\begin{split}
\sum_i\|Q_i A_r u\|^2
\leq C_{\cG,K}\delta\|(D_{y_{n-k}}\otimes\Id) A_r u\|^2+\tilde C_0\big(&\|u\|^2_{H^1_{\loc}}
+\|Gu\|^2_{H^1}\\
&+\|Pu\|^2_{\dot H^{-1}_{R,\loc}}
+\|\tilde G Pu\|^2_{\dot H^{-1}_R}\big).
\end{split}\end{equation*}
Here $Q_i=D_{x_i}\otimes\Id$ as in Lemma~\ref{lemma:Box-form},
and $D_{y_{n-k}}\otimes\Id$ is defined with respect to the same
trivialization.
\end{lemma}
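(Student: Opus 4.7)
The plan is to reduce the estimate to the scalar b-pseudodifferential argument of \cite[Lemma~7.1]{Vasy:Propagation-Wave} via the Dirichlet form bound. First I would apply Lemma~\ref{lemma:Dirichlet-form} to bound $|\langle \nabla A_r u,\nabla A_r u\rangle_{\tilde H\otimes H}|$ by the quantity $C_0(\|u\|_{H^1_\loc}^2+\|Gu\|_{H^1}^2+\|Pu\|_{\dot H^{-1}_{R,\loc}}^2+\|\tilde G Pu\|_{\dot H^{-1}_R}^2)$. Using the explicit form \eqref{eq:H-at-corner2} of $H$ at the corner (with the cross term $C(0,y)$ vanishing), expand
\begin{equation*}
\langle \nabla A_r u,\nabla A_r u\rangle_{\tilde H\otimes H}
= \sum_{ij}A_{ij}\langle Q_i A_r u, Q_j A_r u\rangle
+ \sum_{ij}B_{ij}\langle D_{y_i}A_r u, D_{y_j}A_r u\rangle + E_r,
\end{equation*}
where $E_r$ collects commutator terms and the contribution of the $x$-vanishing cross term; Cauchy--Schwarz places $|E_r|$ in the right-hand side of the desired estimate.

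Since $A_{ij}(0,y)$ is negative definite, a G\aa{}rding-type inequality applied to the scalar second-order operator $-\sum A_{ij}Q_i^*Q_j$ gives $\sum_i\|Q_iA_ru\|^2 \leq c^{-1}|\sum A_{ij}\langle Q_iA_ru,Q_jA_ru\rangle| + (\text{absorbable l.o.t.})$ for some $c>0$ depending only on $K$. Rearranging the previous display therefore bounds $\sum_i\|Q_iA_ru\|^2$ by $|\langle \nabla A_r u,\nabla A_r u\rangle_{\tilde H\otimes H}|$, the tangential quadratic form in $B$, and $|E_r|$. To control the $B$-term I would use the glancing hypothesis: the symbol $b(y,\zeta)=\sum_{ij}B_{ij}(0,y)\zeta_i\zeta_j$ vanishes on $\hat\pi^{-1}(K)$, and the coordinate inequalities \eqref{eq:zeta-large} combined with $U$ lying in a $\delta$-neighborhood of $K$ yield $|b(y,\zeta)|\leq C_{\cG,K}\delta\,|\zeta_{n-k}|^2$ on $\WFb'(\cA)$. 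A sharp G\aa{}rding / symbolic factorization argument applied to the scalar second-order b-operator $\sum B_{ij}D_{y_i}D_{y_j}$ then gives
\begin{equation*}
\Bigl|\sum_{ij}B_{ij}\langle D_{y_i}A_r u, D_{y_j}A_r u\rangle\Bigr|
\leq C_{\cG,K}\delta\,\|(D_{y_{n-k}}\otimes\Id)A_r u\|^2 + (\text{l.o.t.}),
\end{equation*}
with the lower order terms absorbable into $\tilde C_0(\ldots)$ after enlarging the microlocal cutoffs $G,\tilde G$.

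The main obstacle is the last symbolic factorization step: turning the pointwise bound $|b(y,\zeta)|\leq C\delta|\zeta_{n-k}|^2$ on $\WFb'(\cA)$ into an operator inequality modulo acceptably low-order errors, while keeping $C_{\cG,K}$ independent of $\delta$. As the remark preceding the lemma indicates, once the rearrangement identity producing $\sum_i\|Q_iA_ru\|^2$ in terms of the Dirichlet form and the tangential form is in place, the remaining work is purely in the scalar b-calculus --- the form bundle appears only as an inert tensor factor, the choice $A_r=A_0\otimes\Id$ ensures that no non-scalar principal symbols are introduced, and the boundary conditions enter solely through the initial invocation of Lemma~\ref{lemma:Dirichlet-form}.
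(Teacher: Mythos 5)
Your proposal is correct and follows essentially the route the paper itself indicates: invoke Lemma~\ref{lemma:Dirichlet-form} to control the twisted Dirichlet form, expand it using \eqref{eq:H-at-corner2} (with the cross terms vanishing at $x=0$), use pointwise negative definiteness of $A_{ij}$ for the normal part, and use the vanishing of $\tilde p=\zeta\cdot B\zeta$ on $\cG$ together with a sharp G\aa{}rding argument in the b-calculus for the tangential part --- which is exactly the reduction to \cite[Lemma~7.1]{Vasy:Propagation-Wave} (its Equation~(7.2) and the ensuing scalar b-ps.d.o.\ argument) that the paper cites in lieu of a written proof. The only small points to tidy are that the cross-term and error contributions involving $Q_iA_ru$ and tangential derivatives must be absorbed into the left-hand side and into the $C_{\cG,K}\delta\|(D_{y_{n-k}}\otimes\Id)A_ru\|^2$ term (using \eqref{eq:zeta-large}), not merely into the fixed $\tilde C_0$-bracket, but this is routine and consistent with your sketch.
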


\section{Normal propagation}\label{sec:normal}

We now turn to propagation of singularities at hyperbolic points.
Recall from \eqref{eq:b-sections}
that $\sigma_j$ is the b-dual variable of $x_j$,
$\hat\sigma_j=\sigma_j/|\zeta_{n-k}|$.

\begin{prop}\label{prop:normal-prop}(Normal propagation.)
Suppose that $P$ is as in \eqref{eq:P-form}, i.e.\ consider $P$
with relative boundary conditions.
Let $q_0=(0,y_0,0,\zeta_0)\in\cH\cap \Tb^*_{F_{\reg}} X$,
$F\cap U=U
\cap\{x=0\}$,
and let
$$
\eta=-\sum_j\sigmah_j
$$
be the
function defined
in the local coordinates discussed above, and suppose that $u\in
H^1_{R,\loc}(X;\Lambda X)$, $q_0\notin\WFb^{-1,\infty}(f)$,
$f=P u$.
If there exists a conic neighborhood $U$ of $q_0$ in $\Tb^*X\setminus o$
such that
\begin{equation}\begin{split}\label{eq:prop-9a}
q\in U\Mand \eta(q)<0\Rightarrow q\notin\WFb^{1,\infty}(u)
\end{split}\end{equation}
then $q_0\notin\WFb^{1,\infty}(u)$.

In fact, if the wave front set assumptions are relaxed to
$q_0\notin\WFb^{-1,s+1}(f)$ ($f=P u$)
and the existence of a conic neighborhood $U$ of
$q_0$ in $\Tb^*X\setminus o$ such that
\begin{equation}\begin{split}\label{eq:prop-9a-s}
q\in U\Mand \eta(q)<0\Rightarrow q\notin\WFb^{1,s}(u),
\end{split}\end{equation}
then we can still conclude that $q_0\notin\WFb^{1,s}(u)$.
\end{prop}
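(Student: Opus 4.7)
\emph{Plan.} The plan is a positive commutator argument at $q_0\in\cH\cap\Tb^*_{F_{\reg}}X$, modeled on the scalar case of \cite{Vasy:Propagation-Wave} and the Dirichlet case of \cite{Vasy:Diffraction-edges}; the essential new input is Proposition~\ref{prop:gend-comm}, which organizes the commutator in a form compatible with the non-scalar adjoint correction produced by the form bundle. I would first establish the quantitative statement under \eqref{eq:prop-9a-s} and then iterate on $s$ to obtain the $\CI$ conclusion.

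\emph{Commutant.} The preimage $\hat\pi^{-1}(q_0)$ consists of two points $(0,y_0,\xi_\pm,\zeta_0)$ at which a direct computation yields $\sH_p\sigma_j=2\xi_j(A(0,y_0)\xi_\pm)_j$, so the negative definiteness of $A$ forces $\sH_p\eta>0$ at both. I would then construct a scalar b-symbol
\begin{equation*}
a=\chi_0(\phi_0)\,\chi_1(1+\eta/\delta)\in S^0(\Tb^*X),
\end{equation*}
with $\phi_0\geq 0$ vanishing only at $q_0$, built from tangential invariants of $\sH_{\bl,\tilde p}$ (for instance $|y-y_0|^2$, squared differences of the $\zetah_j$, and $\tilde p/|\zeta_{n-k}|^2$); $\chi_0\in\CI_\compl([0,\infty))$ supported near $0$; and $\chi_1$ smooth increasing, equal to $0$ on $(-\infty,0]$ and $1$ on $[1,\infty)$, with $\chi_1'=\psi^2$, $\psi\geq 0$ smooth. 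For $\delta>0$ small, $\supp a$ lies in a conic neighborhood $U$ of $q_0$ on which \eqref{eq:prop-9a-s} applies wherever $\eta<0$. Choose $A_0\in\Psib^0(X)$ with $\sigma_{\bl,0}(A_0)=a$, set $A=A_0\otimes\Id$ and $A_r=A\Lambda_r$; then the main commutator piece $-V_0 a=\sH_{\bl,\tilde p}a$ splits as $\delta^{-1}\chi_0\psi^2(1+\eta/\delta)\sH_{\bl,\tilde p}\eta$ (nonnegative and elliptic at $q_0$) plus a $\chi_0'(\phi_0)$-supported remainder lying in the already-regular region.

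\emph{Commutator identity.} Pairing Proposition~\ref{prop:gend-comm} with $u$ and using $Pu=f$, the left side (an imaginary pairing involving $\langle f,A_r^\dagger A_r u\rangle$) is uniformly bounded in $r\in(0,1]$ by $q_0\notin\WFb^{-1,s+1}(f)$. On the right, the principal contribution to $\langle C_{r,0}u,u\rangle$ from $2w_r^2\,aV_0 a\,\Id$ yields, by sharp G{\aa}rding applied to this principally scalar term, a lower bound of the form $-c\|B_r u\|^2$ plus lower order, with $B_r\in\Psib^{s+1/2}(X;\Lambda X)$ microlocally elliptic at $q_0$. The $Q_i$-bilinear error terms $Q_i^*C_{r,ij}Q_j$, $Q_i^*C'_{r,i}$, $C''_{r,i}Q_i$ are controlled via Lemma~\ref{lemma:Dirichlet-form} applied to $A_r u$, using that the symbols $V_{ij}a, V_i a$ are supported in $\supp a\subset U$. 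The $a^2\tilde c_r$ contributions, carrying an extra factor of $a$, are absorbed by Cauchy--Schwarz into $\|B_r u\|^2$ plus lower order, and $F_r\in\Diffs^2\Psib^{2s-1}$ is one b-order lower and is handled by the inductive hypothesis $\WFb^{1,s-1/2}(u)\cap U=\emptyset$ (which follows from \eqref{eq:prop-9a-s} at $q_0$ after shrinking $U$).

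\emph{Main obstacle.} The principal new difficulty is the term $a\tilde p\tilde V a$ in $\sigma(C_{r,0})$: it arises from $(A_r^\dagger A_r)^*-A_r^\dagger A_r$ via Proposition~\ref{prop:adjoints}, has the same principal order as the main commutator piece $aV_0 a$, and carries no obvious sign. The saving features are that $\tilde V$ is vertical and annihilates $\sigma_i$ at $x_i=0$, so $\tilde V a$ only sees the $\zeta$-dependence of $a$; by including $\tilde p/|\zeta_{n-k}|^2$ and ratios $\zetah_j$ in $\phi_0$, one forces $\tilde V a$ to vanish to sufficient order at $q_0$ and at $\tilde p=0$ that $a\tilde p\tilde V a$ is dominated by the principal positive piece in $U$ for $\delta$ sufficiently small. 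Executing this absorption --- treating the non-scalar adjoint correction as a subprincipal perturbation of the scalar positive commutator --- is the technical heart of the argument. With it in hand, rearranging the commutator identity produces a uniform-in-$r$ bound on $\|B_r u\|$, and passing to the limit $r\to 0$ (by weak-$*$ compactness of $\{A_r u\}$ in the appropriate b-regularity space) gives $Bu\in L^2(X;\Lambda X)$, hence $q_0\notin\WFb^{1,s}(u)$ as required.
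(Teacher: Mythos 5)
There is a genuine gap, and it is in the heart of the positivity argument. Your opening computation is right: at the two points of $\hat\pi^{-1}(q_0)$ one has $\sH_p\eta=-2|\zeta_{n-k}|^{-1}\xi_\pm\cdot A\xi_\pm>0$, but note that this positivity comes entirely from the {\em normal} part $\xi\cdot A\xi$ of $p$. When you then build the commutator and declare the main piece to be $-V_0a=\sH_{\bl,\tilde p}\,a$ with the claim that $\sH_{\bl,\tilde p}\eta$ is ``nonnegative and elliptic at $q_0$'', this fails: $\tilde p=\sum B_{ij}\zeta_i\zeta_j$ is independent of $\sigma$, so $\sH_{\bl,\tilde p}\sigma_j=-x_j\pa_{x_j}\tilde p$ vanishes at $x=0$, and the contribution from differentiating $|\zeta_{n-k}|^{-1}$ in $\eta=-\sum_j\sigma_j|\zeta_{n-k}|^{-1}$ carries a factor $\sigma_j$; hence $\sH_{\bl,\tilde p}\eta$ vanishes at $\Tb^*_FX$ and in particular at $q_0$. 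Your proposed main term is therefore not elliptic -- it is small on the support of the commutant -- while the terms you relegate to ``errors controlled via Lemma~\ref{lemma:Dirichlet-form}'', namely $Q_i^*C_{r,ij}Q_j$, are exactly where the hyperbolic positivity lives (through $V_{ij}|_F=-A_{ij}(\pa_{\sigma_i}+\pa_{\sigma_j})+\cdots$ acting on $\eta$, with $-A_{ij}$ positive definite). The paper's proof extracts this positivity by rewriting, modulo lower order terms, $\sum_{ij}Q_i^*\tilde C_rA_{ij}Q_j\sim \tilde C_rP-\tilde B_r^*\tilde B_r\sum_i(M_iQ_i+Q_i^*M_i')+\tilde B_r^*C^*C\tilde B_r$ using Lemma~\ref{lemma:Box-form}: the $-\tilde P$ term produces the elliptic positive block $\tilde B_r^*C^*C\tilde B_r$ precisely because $\tilde p>0$ on $U$ at hyperbolic points, and the $\tilde C_rP$ term is admissible because $(R')^*$ (and $\tilde C_r^*$) are arranged to preserve the relative boundary condition, so $\langle Pu,(R')^*u\rangle=\langle f,(R')^*u\rangle$. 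None of this appears in your plan, and without it there is no positive main term to absorb anything into. Relatedly, your framing of $a\tilde p\tilde Va$ as the technical heart is off for the normal estimate: in the paper that contribution sits among the $\hat f$/$f$ error terms bounded by $C|x|+C|\hat\sigma|$ resp.\ $C\omega^{1/2}$ on $\supp a$; the genuinely boundary-condition-sensitive step is the one just described.

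There is a second, independent problem with the support structure of your commutant $a=\chi_0(\phi_0)\chi_1(1+\eta/\delta)$ with $\phi_0\geq 0$ independent of $\eta$. The $\chi_0'(\phi_0)$-terms are supported on an annulus $\phi_0\in[c,2c]$ about $q_0$ in the tangential variables, and this annulus meets $\{\eta\geq 0\}$; hypothesis \eqref{eq:prop-9a-s} gives regularity only on $U\cap\{\eta<0\}$, so these error terms are uncontrolled -- they are not ``in the already-regular region'' as claimed. This is exactly why the paper takes $\phi=\eta+\ep^{-2}\delta^{-1}\omega$ and $a=\chi_0(\cte^{-1}(2-\phi/\delta))\chi_1(\eta/\delta+2)\chi_2(|\sigmah|^2)$: the differentiated-$\chi_0$ term is then the {\em main} (good-sign) term rather than an error, the differentiated-$\chi_1$ term $E_r$ is supported in $\eta\in[-2\delta,-\delta]$ where the a priori assumption applies, and the differentiated-$\chi_2$ term $E'_r$ is supported off $\dot\Sigma$ and handled by elliptic regularity. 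To repair your argument you would need to redesign the commutant along these lines and carry out the PDE-rewriting of the $\sum Q_i^*A_{ij}Q_j$ block, at which point you are reproducing the paper's proof.
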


\begin{rem}\label{rem:normal-remark}
The analogous result also holds for absolute boundary condition, either
by a simple modification of the proof given below, or simply using the
Hodge star operator to move between the boundary conditions.

As follows immediately from the proof given below,
in \eqref{eq:prop-9a} and \eqref{eq:prop-9a-s},
one can replace $\eta(q)<0$ by $\eta(q)>0$, i.e.\ one has the conclusion
for either direction (backward or forward) of propagation.

Moreover, every neighborhood $U$ of
$q_0=(y_0,\zeta_0)\in\cH\cap \Tb^*_{F_\reg}X$ in $\dot\Sigma$
contains an open set of the form
\begin{equation}\label{eq:prop-rem-9b}
\{q:\ |x(q)|^2+|y(q)-y_0|^2+|\zetah(q)
-\zetah_0|^2<\delta\},
\end{equation}
see \cite[Equation~(5.1)]{Vasy:Propagation-Wave}.
Note also that
\eqref{eq:prop-9a} implies the same statement with $U$ replaced by
any smaller neighborhood of $q_0$; in particular, for
the set \eqref{eq:prop-rem-9b}, provided that $\delta$ is sufficiently small.
We can also assume by the same observation that $\WFb^{-1,s+1}(Pu)\cap U
=\emptyset$. Furthermore, with $\tilde p=\sigma_{\bl,2}(\tilde P)$, we
can also arrange that $\tilde p(x,y,\sigma,\zeta)>|(\sigma,\zeta)|^2
|\zeta_0|^{-2}
\tilde p(q_0)/2$ on $U$ since
$\zeta_0\cdot B(y_0)\zeta_0=\tilde p(0,y_0,0,\zeta_0)>0$.
\end{rem}

\begin{proof}
We first construct a commutant by defining its scalar principal symbol, $a$.
This completely follows the scalar case, see \cite[Proof of
Proposition~6.2]{Vasy:Propagation-Wave}. Next we show how to obtain
the desired estimate.

So, as in \cite[Proof of
Proposition~6.2]{Vasy:Propagation-Wave}, let
\begin{equation}\label{eq:prop-omega-def}
\omega(q)=|x(q)|^2+|y(q)-y_0|^2+|\zetah(q)-\zetah_0|^2,
\end{equation}
with $|.|$ denoting the Euclidean norm.
For $\ep>0$, $\delta>0$, with other restrictions to be imposed later on,
let
\begin{equation}\label{eq:normal-phi-def}
\phi=\eta+\frac{1}{\ep^2\delta}\omega,
\end{equation}
Let $\chi_0\in\Cinf(\Real)$ be equal to $0$ on $(-\infty,0]$ and
$\chi_0(t)=\exp(-1/t)$ for $t>0$. Thus,
$t^2\chi_0'(t)=\chi_0(t)$ for $t\in\RR$.
Let $\chi_1\in\Cinf(\Real)$ be $0$
on $(-\infty,0]$, $1$ on $[1,\infty)$, with $\chi_1'\geq 0$ satisfying
$\chi_1'\in\Cinf_\compl((0,1))$. Finally, let $\chi_2\in\Cinf_\compl(\Real)$
be supported in $[-2c_1,2c_1]$, identically $1$ on $[-c_1,c_1]$,
where $c_1$ is such that if
$|\sigmah|^2<c_1/2$ in $\dot\Sigma\cap U_0$. Thus,
$\chi_2(|\sigmah|^2)$ is a cutoff in $|\sigmah|$, with its
support properties ensuring that $d\chi_2(|\sigmah|^2)$ is supported in
$|\sigmah|^2\in [c_1,2c_1]$ hence outside $\dot\Sigma$ -- it should
be thought of as a factor that microlocalizes near the characteristic
set but effectively commutes with $P$.
Then, for $\cte>0$ large, to be determined, let
\begin{equation}\label{eq:prop-22}
a=\chi_0(\cte^{-1}(2-\phi/\delta))\chi_1(\eta/
\delta+2)\chi_2(|\sigmah|^2);
\end{equation}
so $a$ is a homogeneous degree zero $\Cinf$ function on a conic neighborhood
of $q_0$ in $\Tb^*X\setminus o$.
Indeed, as we see momentarily, for any $\ep>0$,
$a$ has compact
support inside this neighborhood (regarded as a subset of $\Sb^*X$, i.e.
quotienting out by the $\Real^+$-action) for $\delta$ sufficiently small,
so in fact it is globally well-defined.
In fact, on $\supp a$ we have $\phi\leq 2\delta$ and $\eta\geq-2\delta$.
Since $\omega\geq 0$, the first of these inequalities implies that
$\eta\leq 2\delta$, so on $\supp a$
\begin{equation}
|\eta|\leq 2\delta.
\end{equation}
Hence,
\begin{equation}\label{eq:omega-delta-est}
\omega\leq \ep^2\delta(2\delta-\eta)\leq4\delta^2\ep^2.
\end{equation}
In view of \eqref{eq:prop-omega-def} and \eqref{eq:prop-rem-9b},
this shows that given any $\ep_0>0$ there exists $\delta_0>0$ such that
for any $\ep\in (0,\ep_0)$ and $\delta\in(0,\delta_0)$, $a$ is supported
in $U$.
The role that $\cte$ large
plays (in the definition of $a$)
is that it increases the size of the first derivatives of $a$ relative
to the size of $a$, hence it allows us to give a bound for
$a$ in terms of a small multiple of
its derivative along the Hamilton vector field.

Now let $A_0\in\Psib^0(X)$ with $\sigma_{\bl,0}(A_0)=a$, supported
in the coordinate chart, and let
$A=A_0\otimes\Id$ with respect to the trivialization.
Also let
$\Lambda_r$ be scalar, have symbol
\begin{equation}
|\zeta_{n-k}|^{s+1/2}(1+r|\zeta_{n-k}|^2)^{-s}\,\Id,\quad r\in[0,1),
\end{equation}
so $A_r=A\Lambda_r\in\Psib^{0}(X;\Lambda X)$ for $r>0$ and it is
uniformly bounded in $\Psibc^{s+1/2}(X;\Lambda X)$.
Then, for $r>0$,
\begin{equation}\label{eq:pairing-identity}
\langle \imath Pu,A^\dagger_r A_r u\rangle-\langle \imath A^\dagger_r A_r  u,Pu\rangle
=\langle \imath(A^\dagger_r A_r)^* Pu,u\rangle
-\langle \imath P  A^\dagger_r A_r u,u\rangle.
\end{equation}
We can compute this using Proposition~\ref{prop:gend-comm}. We arrange
the terms of the proposition so that the terms in which a vector
field differentiates $\chi_1$ are included in $E_r$, the terms in which
a vector fields differentiates $\chi_2$ are included in $E'_r$. Thus, we
have
\begin{equation}\label{eq:gend-commutator-normal}
\imath(A^\dagger_r A_r)^* P-\imath P A^\dagger_r A_r =
Q_i^* C_{r,ij}Q_j+Q_i^* C'_{r,i}+C''_{r,j}Q_j+C_{r,0}+E_r+E'_r+F_r,
\end{equation}
with
\begin{equation}\begin{split}\label{eq:gend-commutator-normal-symbol}
&\sigma_{\bl,2s}(C_{r,ij})=w_r^2\Big(4\digamma^{-1}\delta^{-1}a
|\zeta_{n-k}|^{-1}(-A_{ij}+\hat f_{ij}+\ep^{-2}\delta^{-1}
f_{ij})
\chi_0'\chi_1\chi_2+a^2 \tilde c_{r,ij}\Big),\\
&\sigma_{\bl,2s+1}(C'_{r,i})=w_r^2\Big(\digamma^{-1}\delta^{-1}a
(\hat f'_i+\delta^{-1}\ep^{-2}f'_i)
\chi_0'\chi_1\chi_2+a^2 \tilde c'_{r,i}\Big),\\
&\sigma_{\bl,2s+1}(C''_{r,i})=w_r^2\Big(\digamma^{-1}\delta^{-1}a
(\hat f''_i+\delta^{-1}\ep^{-2}f''_i)
\chi_0'\chi_1\chi_2+a^2 \tilde c''_{r,i}\Big),\\
&\sigma_{\bl,2s+2}(C_{r,0})
=w_r^2\Big(\digamma^{-1}\delta^{-1}|\zeta_{n-k}|a
(\hat f_0+\delta^{-1}\ep^{-2}f_0)
\chi_0'\chi_1\chi_2+a^2 \tilde c_{r,0}\Big),
\end{split}\end{equation}
where $f_{ij}$, $f'_i$, $f''_i$ and $f_0$ as well as $\hat f_{ij}, \hat f'_i,
\hat f''_i$ and $\hat f_0$ are all smooth
$\End(\Lambda X)$-valued
functions on $\Tb^* X\setminus o$,
homogeneous of degree 0 (independent of
$\ep$ and $\delta$). Moreover, $f_{ij}, f'_i, f''_i, f_0$ arise from when
$\omega$ is differentiated in $\chi(\cte^{-1}(2-\phi/\delta))$, and thus
vanish when $\omega=0$, while $\hat f_{ij}, \hat f'_i,
\hat f''_i$ and $\hat f_0$ arise when $\eta$ is differentiated in
$\chi(\cte^{-1}(2-\phi/\delta))$, and comprise all such terms with
the exception of those arising from the $\pa_{\sigma_i}$ and $\pa_{\sigma_j}$
components of $V_{ij}|_F$ (which give $A_{ij}$ on the first line above)
hence are the sums of functions vanishing at $x=0$ (corresponding to
us only specifying the restrictions of the vector fields in
\eqref{eq:V_ij-form} at $F$) and functions vanishing at $\hat\sigma=0$
(when $|\zeta_{n-k}|^{-1}$ in $\eta=-\sum_j\sigma_j|\zeta_{n-k}|^{-1}$
is differentiated)\footnote{Terms of the latter kind
did not occur in \cite{Vasy:Propagation-Wave}
as time-translation invariance was assumed, but it does occur in
\cite{Vasy:Diffraction-edges}, where the Lorentzian scalar setting
is considered.}.

In this formula we think of
\begin{equation}\label{eq:normal-main-term}
-4\digamma^{-1}\delta^{-1}w_r^2 a
|\zeta_{n-k}|^{-1}A_{ij}\chi_0'\chi_1\chi_2
\end{equation}
as the main term; note that $-A_{ij}$ is positive definite.
Compared to this,
the terms with $a^2$ are negligible, for they can all
be bounded by
$$
c\digamma^{-1}(\digamma^{-1}\delta^{-1}w_r^2 a
|\zeta_{n-k}|^{-1}\chi_0'\chi_1\chi_2)
$$
(cf.\ \eqref{eq:normal-main-term}), i.e.\ by a small multiple of
$\digamma^{-1}\delta^{-1}w_r^2 a
|\zeta_{n-k}|^{-1}\chi_0'\chi_1\chi_2$
when $\digamma$ is taken large, using that
$2-\phi/\delta\leq 4$ on $\supp a$ and
\begin{equation}\label{eq:chi_0-chi_0-prime}
\chi_0(\digamma^{-1}t)=(\digamma^{-1}t)^2
\chi_0'(\digamma^{-1}t)\leq 16\digamma^{-2}\chi'_0(\digamma^{-1}t),
\ t\leq 4;
\end{equation}
see the discussion in \cite[Section~6]{Vasy:Diffraction-edges}
and \cite{Vasy:Propagation-Wave} following
Equation~(6.19).

The vanishing condition on the $f_{ij}$ and $f_i$ ensures that,
with $|.|$ denoting norms in $\End(\Lambda X)$, on $\supp a$,
\begin{equation}\label{eq:normal-f-estimates}
|f_{ij}|,|f'_i|,|f''_i|,|f_0|\leq C\omega^{1/2}\leq 2C\ep\delta,
\end{equation}
so the corresponding terms can thus
be estimated using $w_r^2\digamma^{-1}\delta^{-1}a
|\zeta_{n-k}|^{-1}\chi_0'\chi_1\chi_2$ provided $\ep^{-1}$ is not too
large, i.e.\ there exists $\tilde\ep_0>0$ such that if $\ep>\tilde\ep_0$,
the terms with $f_{ij}$ can be treated as error terms.

On the other hand, we have
\begin{equation}\label{eq:normal-hat-f-estimates}
|\hat f_{ij}|,|\hat f'_i|,|\hat f''_i|,|\hat f_0|
\leq C|x|+C|\hat\sigma|\leq C\omega^{1/2}+C|\hat\sigma|
\leq 2C\ep\delta+C|\hat\sigma|.
\end{equation}
Now, on $\dot\Sigma$, $|\hat\sigma|\leq 2|x|$ (for $|\sigma_j|=|x_j||\xi_j|\leq
2|x_j| |\zeta_{n-k}|$ with $U$ sufficiently small). Thus we can write
$\hat f_{ij}=\hat f_{ij}^\sharp+\hat f_{ij}^\flat$ with
$\hat f_{ij}^\flat$ supported away from $\dot\Sigma$ and
$\hat f_{ij}^\sharp$ satisfying
\begin{equation}\label{eq:normal-hat-f-sharp-estimates}
|\hat f_{ij}^\sharp|\leq C|x|+C|\hat\sigma|\leq C'|x|\leq
C'\omega^{1/2}\leq 2C'\ep\delta;
\end{equation}
we can also obtain a similar decomposition for $\hat f'_i,\hat f''_i,
\hat f_0$.

Indeed, using \eqref{eq:chi_0-chi_0-prime} it is useful to rewrite
\eqref{eq:gend-commutator-normal-symbol} as
\begin{equation}\begin{split}\label{eq:gend-commutator-normal-symbol-2}
&\sigma_{\bl,2s}(C_{r,ij})=4w_r^2\digamma^{-1}\delta^{-1}a
|\zeta_{n-k}|^{-1}(-A_{ij}+\hat f_{ij}+\ep^{-2}\delta^{-1}
f_{ij}+\digamma^{-1}\delta\hat c_{r,ij})
\chi_0'\chi_1\chi_2,\\
&\sigma_{\bl,2s+1}(C'_{r,i})=w_r^2\delta^{-1}\digamma^{-1}a(\hat f'_i+
\delta^{-1}\ep^{-2}f'_i+
\digamma^{-1}\delta\hat c'_{r,i})
\chi_0'\chi_1\chi_2,\\
&\sigma_{\bl,2s+1}(C''_{r,i})=w_r^2\delta^{-1}\digamma^{-1}a(\hat f''_i+
\delta^{-1}\ep^{-2}f''_i+
\digamma^{-1}\delta\hat c''_{r,i})
\chi_0'\chi_1\chi_2,\\
&\sigma_{\bl,2s+2}(C_{r,0})
=w_r^2\delta^{-1}\digamma^{-1}a|\zeta_{n-k}|
(\hat f_0+\delta^{-1}\ep^{-2}f_0+\digamma^{-1}\hat
c_{r,0})
\chi_0'\chi_1\chi_2,
\end{split}\end{equation}
with
\begin{itemize}
\item
$f_{ij}$, $f'_i$, $f''_i$ and $f_0$ are all smooth
$\End(\Lambda X)$-valued
functions on $\Tb^* X\setminus o$, homogeneous of degree 0, satisfying
\eqref{eq:normal-f-estimates} (and are independent of $\digamma,\ep,\delta,r$),
\item
$\hat f_{ij}$, $\hat f'_i$, $\hat f''_i$ and $\hat f_0$ are all smooth
$\End(\Lambda X)$-valued
functions on $\Tb^* X\setminus o$, homogeneous of degree 0, with
$\hat f_{ij}=\hat f^\sharp_{ij}+\hat f^\flat_{ij}$,
$\hat f^\sharp_{ij},(\hat f'_i)^\sharp,(\hat f''_i)^\sharp,
\hat f_0^\sharp$ satisfying
\eqref{eq:normal-hat-f-sharp-estimates}
(and are independent of $\digamma,\ep,\delta,r$),
while $\hat f^\flat_{ij},(\hat f'_i)^\flat,(\hat f''_i)^\flat,
\hat f_0^\flat$ is supported away from $\dot\Sigma$,
\item
and $\hat c_{r,ij}$, $\hat c'_{r,i}$,
$\hat c''_{r,i}$ and $\hat c_{r,0}$ are all smooth
$\End(\Lambda X)$-valued
functions on $\Tb^* X\setminus o$, homogeneous of degree 0, uniformly bounded
in $\ep,\delta,r,\digamma$.
\end{itemize}

In fact, it is
useful to rewrite the leading term in $Q_i^*C_{r,ij}Q_j$,
namely the term that contributes to $C_{r,ij}$ with symbol
$-4w_r^2 \digamma^{-1}\delta^{-1}A_{ij}a
|\zeta_{n-k}|^{-1}\chi_0'\chi_1\chi_2$ (cf.\ \eqref{eq:normal-main-term}),
as a b-operator
using the PDE, modulo lower order terms.
Thus, let
$$
b_r=2w_r|\zeta_{n-k}|^{1/2}(\digamma\delta)^{-1/2}(\chi_0\chi'_0)^{1/2}\chi_1\chi_2,
$$
and let $\tilde B_r\in\Psib^{s+1}(X;\Lambda X)$ with principal symbol
$b_r\,\Id_{\Lambda X}$.
Then let
\begin{equation*}
C\in\Psib^0(X;\Lambda X),\ \sigma_{\bl,0}(C)=|\zeta_{n-k}|^{-1}
\tilde p^{1/2}\psi\,\Id_{\Lambda X}
\end{equation*}
where
$\psi\in S^0_{\hom}(\Tb^*X\setminus o)$ is identically $1$ on $U$
considered as a subset
of $\Sb^*X$; recall from Remark~\ref{rem:normal-remark}
that $\tilde p$ is bounded below by a positive
quantity here.

If $\tilde C_{r}\in\Psib^{2s}(X;\Lambda X)$
with principal symbol
$$
\sigma_{\bl,2s}(\tilde C_r)=
-4w_r^2 \digamma^{-1}\delta^{-1}a |\zeta_{n-k}|^{-1}\chi_0'\chi_1\chi_2
\,\Id_{\Lambda X}
=-|\zeta_{n-k}|^{-2}b_r^2\,\Id_{\Lambda X}
$$
and with $\tilde C_r^*$ preserving
boundary conditions\footnote{But not necessarily $C_r$; we can construct
$C_r$ by first constructing an operator preserving the boundary conditions
and with symbol equal the the adjoint of the desired symbol of $C_r$,
and then taking its adjoint.},
then, with $\sim$ denoting operators differing by an element
of $\Diffs^2\Psib^{2s-1}(X;\Lambda X)$,
\begin{equation*}\begin{split}
\sum_{ij}Q_i^* \tilde C_{r}A_{ij}Q_j&\sim\tilde C_{r} \sum_{ij}Q_i^* A_{ij}Q_j
=\tilde C_{r}(P-\sum_i(M_iQ_i+Q_i^*M'_i)-\tilde P)\\
&\sim\tilde C_r P-B_r^*\tilde B_r(\sum_i(M_iQ_i+Q_i^*M'_i))
+\tilde B_r^* C^*C\tilde B_r,
\end{split}\end{equation*}
so we deduce from
\eqref{eq:gend-commutator-normal}-\eqref{eq:gend-commutator-normal-symbol-2}
that\footnote{The $f^\sharp_{ij}$ terms are included in $R_{ij}$,
while the $f^\flat_{ij}$ terms are included in $E'$, and similarly
for the other analogous terms in $f'_i$, $f''_i$, $f_0$.}
\begin{equation}\begin{split}\label{eq:P-comm}
&\imath(A^\dagger_r A_r)^* P-\imath P A^\dagger_r A_r\\
&\quad=R'P+\tilde B^*_r\big(C^*C+R_0+\sum_i (Q_i^* R_i+\tilde R_i Q_i)
+\sum_{ij} Q_i^*R_{ij}Q_j\big)\tilde B_r+R''+E+E'
\end{split}\end{equation}
with
\begin{equation*}\begin{split}
&R_0\in\Psib^0(X;\Lambda X),\ R_i,\tilde R_i\in\Psib^{-1}(X;\Lambda X),
\ R_{ij}\in\Psib^{-2}(X;\Lambda X),\\
&R'\in\Psib^{-1}(X;\Lambda X),\ R''\in\Diff^2\Psib^{-2}(X;\Lambda X),
\ E,E'\in\Diff^2\Psib^{-1}(X;\Lambda X),
\end{split}\end{equation*}
with $\WFb'(E)\subset\eta^{-1}((-\infty,-\delta])
\cap U$, $\WFb'(E')\cap\dot\Sigma=\emptyset$,
$(R')^*$ preserving the boundary conditions,
and with
$r_0=\sigma_{\bl,0}(R_0)$, $r_i=\sigma_{\bl,-1}(R_i)$,
$\tilde r_i=\sigma_{\bl,-1}(\tilde R_i)$,
$r_{ij}\in\sigma_{\bl,-2}(R_{ij})$, $|.|$ denoting endomorphism norms,
\begin{equation*}\begin{split}
&|r_0|\leq C_2(\delta\ep+\ep^{-1}+\delta\digamma^{-1}),
\ |\zeta_{n-k} r_i|\leq C_2(\delta\ep+\ep^{-1}+\delta\digamma^{-1}),\\
&|\zeta_{n-k} \tilde r_i|\leq C_2(\delta\ep+\ep^{-1}+\delta\digamma^{-1}),
\ |\zeta_{n-k}^2 r_{ij}|\leq C_2(\delta\ep+\ep^{-1}+\delta\digamma^{-1}).
\end{split}\end{equation*}
This is exactly the form-valued version of
\cite[Equation~(6.18)]{Vasy:Propagation-Wave}, except the presence of the
$\delta\digamma^{-1}$ term which however is treated like the $\ep\delta$ term
for $\digamma$ sufficiently large, hence the rest of the
proof proceeds exactly as in that paper -- the only point where one
needs to use the boundary conditions from here on is that $(R')^*$
preserves these,
so $\langle Pu,(R')^*u\rangle=\langle f,(R')^*u\rangle$. In order to eliminate
duplication of the rest of the
argument of \cite[Proof of Proposition~6.2]{Vasy:Propagation-Wave},
we do not repeat it here.
\end{proof}

\section{Tangential propagation}\label{sec:tangential}
We now consider tangential propagation.

\begin{prop}\label{prop:tgt-prop}(Tangential propagation.)
Suppose that $P$ is as in \eqref{eq:P-form}, i.e.\ consider $P$
with relative boundary conditions.
Let $\cU_0$ be a coordinate chart in $X$, $\cU$ open with $\overline{\cU}
\subset\cU_0$.
Let $u\in H^1_{R,\loc}(X;\Lambda X)$, and let $\tilde\pi:T^*X\to T^*F$ be
the coordinate projection
$$
\tilde\pi:(x,y,\xi,\zeta)\mapsto(y,\zeta).
$$
Given $K\subset\Sb^*_{\cU}X$ compact
with
\begin{equation}
K\subset(\cG\cap \Tb^*_{F_{\reg}} X)\setminus\WFb^{-1,\infty}(f),\ f=P u,
\end{equation}
there exist constants $C_0>0$,
$\delta_0>0$ such that the following holds. If
$q_0=(y_0,\zeta_0)\in K$, $\alpha_0=\hat\pi^{-1}(q_0)$,
$W_0=\tilde\pi_*|_{\alpha_0}\sH_p$
considered as a constant vector field in local
coordinates, and
for some $0<\delta<\delta_0$, $C_0\delta\leq\epsilon<1$ and
for all $\alpha=(x,y,\xi,\zeta)\in \Sigma$
\begin{equation}\begin{split}\label{eq:tgt-prop-est}
\alpha\in T^*X &\Mand
|\tilde\pi(\alpha-\alpha_0-\delta W_0)|
\leq\epsilon\delta\Mand
|x(\alpha)|\leq\epsilon\delta\\
&\Rightarrow
\pi(\alpha)\notin\WFb^{1,\infty}(u),
\end{split}\end{equation}
then $q_0\notin\WFb^{1,\infty}(u)$.
\end{prop}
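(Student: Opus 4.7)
The plan is a positive commutator argument patterned on the scalar tangential propagation proof in \cite{Vasy:Propagation-Wave}, modified to accommodate (a) the non-scalar commutator structure from Proposition~\ref{prop:gend-comm} and (b) the glancing-point estimate Lemma~\ref{lemma:Dt-Dx}, which controls the normal derivatives $Q_i A_r u$ at glancing points.

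First I would construct a homogeneous-degree-zero scalar symbol $a$ parabolically concentrated about the chord $\{\tilde\pi(\alpha_0)+sW_0:s\in[0,\delta]\}$. With $\phi$ a linear coordinate in the $W_0$-direction (as in \eqref{eq:normal-phi-def}) and
\begin{equation*}
\omega = |x|^2 + |\zeta_{n-k}|^{-2}\bigl|\tilde\pi(\alpha)-\tilde\pi(\alpha_0)-\phi\,W_0\bigr|^2,
\end{equation*}
set
\begin{equation*}
a = \chi_0\!\bigl(\cte^{-1}(2-\phi/\delta-\ep^{-2}\delta^{-2}\omega)\bigr)\chi_1(\phi/\delta+2)\chi_2(|\hat\sigma|^2),
\end{equation*}
with cutoffs as in the proof of Proposition~\ref{prop:normal-prop}. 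Then $\supp a$ lies in a parabolic region of tangential length $\delta$ and transverse radius $O(\ep\delta)$, and by design $-\sH_{\bl,\tilde p}(a)$ dominates $\cte^{-1}\delta^{-1}|\zeta_{n-k}|\,a\,\chi_0'\chi_1\chi_2$ on $\supp a$. The hypothesis \eqref{eq:tgt-prop-est} puts $\supp a\cap\{\phi\leq-\delta\}$ in the a priori wave-front-set-free region, provided $C_0$ is large enough and $\delta$ is small.

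Setting $A=A_0\otimes\Id$ with symbol $a$ and $A_r=A\Lambda_r$, I expand
\begin{equation*}
\langle\imath[(A^\dagger_r A_r)^*P-PA^\dagger_r A_r]u,u\rangle
=\langle\imath Pu,A^\dagger_r A_ru\rangle-\langle\imath A^\dagger_r A_ru,Pu\rangle
\end{equation*}
via \eqref{eq:gend-commutator}. The principal positive term is $aV_0 a|_F=-a\sH_{\bl,\tilde p}(a)$, coming from $C_{r,0}$. The error terms fall into three classes: (i) the $V_{ij}a$ and $V_i a$ contributions, bundled with $Q_i^*\cdots Q_j$, $Q_i^*$, or $Q_j$ factors; (ii) the $\tilde V a$ contributions from the non-scalar adjoint $(A^\dagger_r A_r)^*-A^\dagger_r A_r$; and (iii) the $a^2\tilde c_{r,\cdot}$ remainders. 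Class (i) is handled by Lemma~\ref{lemma:Dt-Dx}, which yields $\sum_i\|Q_i A_r u\|^2\leq C_{\cG,K}\delta\|(D_{y_{n-k}}\otimes\Id)A_r u\|^2+(\text{lower order})$, the tangential norm then being absorbed into the main coercive term using that $\tilde p$ is elliptic in $\tau=\zeta_{n-k}$ on $\supp a\subset\dot\Sigma$. Class (ii) uses Proposition~\ref{prop:adjoints}: $\tilde V$ is vertical and annihilates $\sigma_i$ at $x_i=0$, so at $F$ only the $\pa_{\zeta_j}$ components differentiate $a$, producing factors $O(\omega^{1/2}/\delta)=O(\ep)$ on $\supp a$, absorbable for $\ep$ small. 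Class (iii) is absorbed via $\chi_0(\cte^{-1}t)\leq 16\cte^{-2}\chi_0'(\cte^{-1}t)$ for $t\leq 4$ (cf.\ \eqref{eq:chi_0-chi_0-prime}) upon taking $\cte$ large. The $\chi_1'$-contributions sit in $\{\phi\leq-\delta\}$ and are controlled by hypothesis \eqref{eq:tgt-prop-est}; the $\chi_2'$-contributions are off $\dot\Sigma$ and handled by Proposition~\ref{prop:elliptic}. The $\langle Pu,\cdot\rangle$ pairings are controlled since $\WFb^{-1,\infty}(f)$ is disjoint from $\supp a$, and, after the $\tilde C_r P$ rewriting as in \eqref{eq:P-comm}, the operator on the dual slot preserves the relative boundary condition.

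The main obstacle is class (i): these terms have symbols of the same order of magnitude as the coercive term, and the decisive smallness comes solely from the $\delta$-gain in Lemma~\ref{lemma:Dt-Dx} combined with the tangential ellipticity of $\tilde p$. Once this is verified, one chooses $\cte$ large first, then $\ep$ small, then $\delta$ small (consistent with the hypothesis $C_0\delta\leq\ep$), and uses a standard regularization in $r\in(0,1]$ together with iteration on $s$, as in \cite[Proof of Proposition~7.2]{Vasy:Propagation-Wave}, to obtain $q_0\notin\WFb^{1,\infty}(u)$.
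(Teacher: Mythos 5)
Your overall architecture (parabolically scaled commutant along the chord, expansion via Proposition~\ref{prop:gend-comm}, Lemma~\ref{lemma:Dt-Dx} to kill the terms carrying $Q_i$ factors, $\chi_1'$ handled by the a priori assumption, $\chi_2'$ handled off $\dot\Sigma$, regularization in $r$ and iteration in $s$) is the same as the paper's, and your class (i) and class (iii) treatments are essentially right. But there is a genuine gap exactly at the point that is the main novelty of this problem: your class (ii), the contribution of the non-scalar adjoint $\tilde V$. In Proposition~\ref{prop:gend-comm} the $\tilde V$ term that enters $C_{r,0}$ -- the one block with \emph{no} $Q_i$ factors, so Lemma~\ref{lemma:Dt-Dx} cannot help -- is $a\,\tilde p\,\tilde V a$, not $a\,\tilde V a$. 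Your bookkeeping is wrong: since $\tilde V$ is vertical and kills $\tilde\eta$ (and $\sigma_i$ at $x_i=0$), it differentiates $a$ only through the parabolic weight, producing $\digamma^{-1}\ep^{-2}\delta^{-2}(\tilde V\omega)\chi_0'$ with $|\tilde V\omega|=O(\omega^{1/2}|\zeta_{n-k}|^{-1})=O(\ep\delta|\zeta_{n-k}|^{-1})$; relative to the coercive term $\sim 4|\zeta_{n-k}|\digamma^{-1}\delta^{-1}a\chi_0'$ this is of size $O(\ep^{-1})$, not $O(\omega^{1/2}/\delta)=O(\ep)$ as you claim, and it gets \emph{worse} as $\ep\to 0$, so ``absorbable for $\ep$ small'' is backwards (note the hypothesis forces $\ep\geq C_0\delta$, i.e.\ small $\ep$ is the dangerous regime).

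What saves this term is the explicit factor $\tilde p$ multiplying $\tilde V$, together with the fact that $\tilde p$ is $O(\ep\delta|\zeta_{n-k}|^2)$ on $\supp a$ near the glancing set. In the paper this is arranged by taking $\rho_1=|\zeta_{n-k}|^{-2}\tilde p$ as one of the functions whose squares build $\omega$ (using $\sH_{\tilde p}\tilde p\equiv 0$ so $W\rho_1(q_0)=0$), and then used via the remark that $\tilde p\,\tilde V=|\zeta_{n-k}|^{2}\rho_1\tilde V$ contributes to $f_{0,0}$ with quadratic vanishing, yielding $\ep^{-2}\delta^{-1}|f_{0,0}|\leq C\ep^{-1}\delta\leq c_0$; the smallness comes from $\delta$ and $\delta/\ep$, never from $\ep$. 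Your $\omega$ (full transverse deviation from the chord) does in fact control $|\zeta_{n-k}|^{-2}\tilde p$ by $O(\ep\delta+\delta^2)$, so the ingredient is available in your setup, but you never invoke it, and your stated reason for smallness fails; without it the $C_{r,0}$ error is not dominated by the positive term. Relatedly, your absorption of the tangential norm ``using that $\tilde p$ is elliptic in $\tau=\zeta_{n-k}$'' is misstated: at glancing points $\tilde p$ vanishes on $\supp a$; what one uses is that $\zeta_{n-k}$ is comparable to $|\zeta|$ near $\dot\Sigma$ and that the positivity comes from $\sH_{\tilde p}\tilde\eta\approx 2|\zeta_{n-k}|$, with the constants chosen in the order: $c_0$ (hence the error coefficients) small, then $\delta_0$ small subject to $\ep\geq (C/c_0)^2\delta$, then $\digamma$ large.
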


\begin{proof}
Again, we first construct the symbol $a$ of our commutator following
\cite[Proof of Proposition~7.3]{Vasy:Propagation-Wave} as corrected
in \cite{Vasy:Propagation-Wave-Correction}.
Note that
$$
W_0(q_0)=\sH_{\tilde p}(q_0),
$$
and let
$$
W=|\zeta_{n-k}|^{-1}W_0,
$$
so $W$ is homogeneous of degree zero (with respect to the $\RR^+$-action on
the fibers of $T^*F\setminus o$).
We can use
$$
\tilde\eta=(\sgn(\zeta_{n-k})_0)(y_{n-k}-(y_{n-k})_0)
$$
now to measure propagation,
since $\zeta_{n-k}^{-1}\sH_{\tilde p}(y_{n-k})=2>0$ at $q_0$, so
$\sH_{\tilde p}\tilde\eta$ is
$2|\zeta_{n-k}|>0$ at $q_0$.

First, we require
$$
\rho_1=\tilde p(y,\zetah)=|\zeta_{n-k}|^{-2}\tilde p(y,\zeta);
$$
note that $d\rho_1\neq 0$ at $q_0$ for $\zeta\neq 0$ there, but
$\sH_{\tilde p}\tilde p\equiv 0$, so
$$
W\rho_1(q_0)=0.
$$
Next, since
$\dim F=n-k$, $\dim T^*F=2n-2k$, hence $\dim S^*F=2n-2k-1$. With a
slight abuse of notation, we also regard $q_0$ as a point in
$S^*F$ -- recall that $S^*F=(T^*F\setminus o)/\RR^+$. We can also
regard $W$ as a vector field on $S^*F$ in view of its homogeneity.
As $W$ does not vanish as a vector in $T_{q_0}S^*F$
in view of
$W\tilde\eta(q_0)\neq 0$, $\tilde\eta$ being
homogeneous degree zero, hence a function on $S^*F$,
the kernel of $W$ in $T^*_{q_0}S^*F$ has dimension $2n-2k-2$.
Thus there exist
$\rho_j$, $j=2,\ldots,2n-2k-2$ be homogeneous degree zero functions
on $T^*F$ (hence functions on $S^*F$)
such that
\begin{equation}\begin{split}
&\rho_j(q_0)=0,\ j=2,\ldots,2n-2k-2,\\
&W\rho_j(q_0)=0,\ j=2,\ldots,2n-2k-2,\\
&d\rho_j(q_0),\ j=1,\ldots,2n-2k-2\ \text{are linearly independent at}\ q_0.\\
\end{split}\end{equation}
By dimensional considerations,
$d\rho_j(q_0)$, $j=1,\ldots,2n-2k-2$, together with $d\tilde\eta$
span the cotangent space of $S^*F$ at $q_0$, i.e.\ of
the quotient of $T^*F$ by the $\Real^+$-action.

Hence,
$$
|\zeta_{n-k}|^{-1}W_0 \rho_j=\sum_{i=1}^{2n-2k-2}\tilde F_{ij}\rho_j+
\tilde F_{2n-2k-1}\tilde\eta,\ j=2,\ldots,2n-2k-2,
$$
with $\tilde F_{ij}$ smooth, $i=1,\ldots,2n-2k-2$, $j=2,\ldots,2n-2k-2$.
Then we
extend $\rho_j$ to a function on $\Tb^*X\setminus o$ (using the coordinates
$(x,y,\sigma,\zeta)$), and conclude that
\begin{equation}\label{eq:sH-rho_2}
|\zeta_{n-k}|^{-1}\sH_{\tilde p} \rho_j=
\sum_{l=1}^{2n-2k-2}\tilde F_{jl}\rho_l+
\tilde F_{2n-2k-1}\tilde\eta+\sum_l \tilde F_{0,jl} x_l,\ j=2,\ldots,2n-2k-2,
\end{equation}
with $\tilde F_{jl}$, $\tilde F_{0,jl}$ smooth, $\pa_{x_l}\rho_j$ vanishes.
Similarly,
\begin{equation}\label{eq:sH-tilde-eta}
|\zeta_{n-k}|^{-1}\sH_{\tilde p} \tilde\eta=2+\sum_{l=1}^{2n-2k-2}\hat F_l\rho_l
+\hat F_{2n-2k-1}\tilde\eta
+\sum_l \hat F_{0,l} x_l,
\end{equation}
with $\hat F_l$, $\hat F_{0,l}$ smooth,
$\pa_{x_l}\tilde\eta$ vanishes, as do vertical
derivatives of $\tilde\eta$.

Let
\begin{equation}
\omega=|x|^2+\sum_{j=1}^{2n-2k-2}\rho_j^2.
\end{equation}
Finally, we let
\begin{equation}\label{eq:glancing-phi-def}
\phi=\tilde\eta+\frac{1}{\ep^2\delta}\omega,
\end{equation}
and define $a$ by
\begin{equation}\label{eq:prop-22t}
a=\chi_0(\digamma^{-1}(2-\phi/\delta))\chi_1((\tilde\eta\delta)/
\ep\delta+1)\chi_2(|\sigma|^2/\zeta_{n-k}^2),
\end{equation}
with $\chi_0,\chi_1$ and $\chi_2$ as in the case of the normal
propagation estimate, stated after \eqref{eq:normal-phi-def}.
We always assume $\ep<1$, so on $\supp a$ we have
\begin{equation*}
\phi\leq 2\delta\Mand \tilde\eta\geq-\ep\delta-\delta\geq-2\delta.
\end{equation*}
Since $\omega\geq 0$,
the first of these inequalities implies that
$\tilde\eta\leq 2\delta$, so on $\supp a$
\begin{equation}
|\tilde\eta|\leq 2\delta.
\end{equation}
Hence,
\begin{equation}\label{eq:omega-delta-est-t}
\omega\leq \ep^2\delta(2\delta-\tilde\eta)\leq4\delta^2\ep^2.
\end{equation}
Moreover, on $\supp d\chi_1$,
\begin{equation}\label{eq:dchi_1-supp}
\tilde\eta\in[-\delta-\ep\delta,-\delta],\ \omega^{1/2}
\leq 2\ep\delta,
\end{equation}
so this region lies in \eqref{eq:tgt-prop-est} after $\ep$ and $\delta$
are both replaced by appropriate constant multiples, namely the present
$\delta$ should be replaced by $\delta/(2|(\zeta_{n-k})_0|)$.

We proceed as in the case of hyperbolic points,
letting $A_0\in\Psib^0(X)$ with $\sigma_{\bl,0}(A_0)=a$, supported
in the coordinate chart, and letting
$A=A_0\otimes\Id$ with respect to the trivialization.
Also let
$\Lambda_r$ be scalar with symbol
\begin{equation}
|\zeta_{n-k}|^{s+1/2}(1+r|\zeta_{n-k}|^2)^{-s}\,\Id,\quad r\in[0,1),
\end{equation}
so $A_r=A\Lambda_r\in\Psib^{0}(X;\Lambda X)$ for $r>0$ and it is
uniformly bounded in $\Psibc^{s+1/2}(X;\Lambda X)$.
Then, for $r>0$,
\begin{equation}\label{eq:pairing-identity-glance}
\langle \imath Pu,A^\dagger_r A_r u\rangle-\langle \imath A^\dagger_r A_r  u,Pu\rangle
=\langle \imath(A^\dagger_r A_r)^* Pu,u\rangle
-\langle \imath P  A^\dagger_r A_r u,u\rangle.
\end{equation}
We can compute this using Proposition~\ref{prop:gend-comm}, noting
that the vector fields $\tilde V_{ij}$, $\tilde V_i$ and $\tilde V_0$
satisfy
$$
\tilde V_\bullet \chi(\digamma^{-1}(2-\phi/\delta))=
\digamma^{-1}\ep^{-2}\delta^{-2}
(\tilde V_\bullet\omega)\cdot\chi'_0(\digamma^{-1}(2-\phi/\delta)),
$$
since $\tilde\eta$ is the pull-back of a function on the
base and $\tilde V_\bullet$ is vertical.

We arrange
the terms of the proposition so that the terms in which a vector
field differentiates $\chi_1$ are included in $E_r$, the terms in which
a vector fields differentiates $\chi_2$ are included in $E'_r$. Thus, we
have
\begin{equation}\label{eq:gend-commutator-tgt}
\imath(A^\dagger_r A_r)^* P-\imath P A^\dagger_r A_r =
Q_i^* C_{r,ij}Q_j+Q_i^* C'_{r,i}+C''_{r,j}Q_j+C_{r,0}+E_r+E'_r+F_r,
\end{equation}
with
\begin{equation}\begin{split}\label{eq:gend-commutator-tgt-symbols}
&\sigma_{\bl,2s}(C_{r,ij})=w_r^2\Big(\digamma^{-1}\delta^{-1}a
|\zeta_{n-k}|^{-1}\big(f_{1,ij}+\ep^{-2}\delta^{-1}
f_{0,ij}\big)
\chi_0'\chi_1\chi_2+a^2 \tilde c_{r,ij}\Big),\\
&\sigma_{\bl,2s+1}(C_{r,i}^\bullet)=w_r^2\Big(\digamma^{-1}\delta^{-1}a
\big(f_{1,i}^\bullet+\delta^{-1}\ep^{-2}f_{0,i}^\bullet\big)
\chi_0'\chi_1\chi_2+a^2 \tilde c_{r,i}^\bullet\Big),\ \bullet=',''\\
&\sigma_{\bl,2s+2}(C_{r,0})
=w_r^2\Big(|\zeta_{n-k}|\digamma^{-1}\delta^{-1}a\big(4+f_{1,0}
+\delta^{-1}
\ep^{-2} f_{0,0}\big)
\chi_0'\chi_1\chi_2+a^2 \tilde c_{r,0}\Big),
\end{split}\end{equation}
where
$f_{k,ij}$, $f_{k,i}$ and $f_{k,0}$ are all smooth $\End(\Lambda X)$-valued
functions on $\Tb^* X\setminus o$,
homogeneous of degree 0 (independent of
$\ep\in[0,1)$, $k=0,1$, and $\delta\in[0,1]$), arising
when $\omega$ is differentiated in $\chi_0(\cte^{-1}(2-\phi/\delta))$
for $k=0$ and when $\tilde\eta$ is differentiated for $k=1$, and
all such terms are included in these except $f_{1,0}$ is missing
$|\zeta_{n-k}|^{-1}\sH_{\tilde p}\tilde\eta(q_0)=2>0$ extended as the constant
function near $q_0$. Moreover, as $V_\bullet \rho^2=2\rho V_\bullet\rho$
for any function $\rho$, the terms with $k=0$ have vanishing factors
of $\rho_l$, resp.\ $x_l$, with the structure of
the remaining factor dictated by the form of $V_\bullet\rho_l$,
resp.\ $V_\bullet x_l$, and similarly
for $\tilde V$ (which only affects the $k=0$ terms as discussed
earlier).
Thus, using \eqref{eq:sH-rho_2} to compute $f_{0,0}$,
\eqref{eq:sH-tilde-eta} to compute $f_{1,0}$, and recalling that
$\tilde V$ appears in the form $\tilde p\tilde V=\rho_1\tilde V$
in $f_{0,0}$,
\begin{equation*}\begin{split}
f_{0,ij}&=\sum_{k} \rho_k \tilde f_{0,ijk}+\sum_{k}x_k
\hat f_{0,ijk},\\
f_{0,i}^\bullet&=\sum_l \rho_k \tilde f_{0,ik}^\bullet
+\sum_k x_k\hat f_{0,ik}^\bullet,\ \bullet=','',\\
f_{0,0}&=\sum_{kl}\rho_k\rho_l\tilde f_{0,0kl}
+\sum_{kl} \rho_k x_l\hat f_{0,0kl}+\sum_{kl} x_k x_l \check f_{0,0kl}
\sum_{k} \rho_k\tilde\eta f^\sharp_{0,0k}+\sum_k x_k\tilde\eta
f^\flat_{0,0k},\\
f_{1,0}&=\sum_k x_k \tilde f_{1,0k}+\sum_k\rho_k \hat f_{1,0k}+
\tilde\eta f^\flat_{1,0},
\end{split}\end{equation*}
with $\tilde f_{0,ijk}$, etc., smooth.
With $|.|$ denoting norms in $\End(\Lambda X)$, we deduce that
\begin{equation}\begin{split}\label{eq:tgt-prop-quad-terms}
\ep^{-2}\delta^{-1}|f_{0,ij}|\leq C\ep^{-1},
\ |f_{1,ij}|\leq C,
\end{split}\end{equation}
while
\begin{equation}\label{eq:tgt-prop-lin-terms}
\ep^{-2}\delta^{-1}|f_{0,i}^\bullet|\leq C\ep^{-1},
\ |f_{1,i}^\bullet|\leq C,
\end{equation}
$\bullet=',''$, and
\begin{equation}\label{eq:tgt-prop-tgt-terms}
\ep^{-2}\delta^{-1}|f_{0,0}|\leq C\ep^{-1}\delta,\ |f_{1,0}|\leq C\delta.
\end{equation}
We remark that although thus far we worked with a single $q_0\in K$, the
same construction works with $q_0$ in a neighborhood $U_{q'_0}$ of a fixed
$q'_0\in K$, with a {\em uniform} constant $C$. In view of the
compactness of $K$, this suffices (by the rest
of the argument we present below) to give the uniform estimate
of the proposition.

For a small constant $c_0>0$ to be determined, which we may assume to be less
than $C$, we demand below that
the expressions on the right hand sides of \eqref{eq:tgt-prop-quad-terms}
are bounded by $c_0(\ep\delta)^{-1}$,
those on the right hand sides of
\eqref{eq:tgt-prop-lin-terms} are bounded by $c_0(\ep\delta)^{-1/2}$,
while those on the right hand sides of \eqref{eq:tgt-prop-tgt-terms}
are bounded by $c_0$. This demand is due to the appearance of two, resp.\ one,
resp.\ zero, factors of $Q_i$ in \eqref{eq:gend-commutator-tgt} for
the terms whose principal symbols are affected by these, taking
into account that in view of Lemma~\ref{lemma:Dt-Dx}
we can estimate $\|Q_i v\|$ by
$C_{\cG,K}(\ep\delta)^{1/2}\|(D_{y_{n-k}}\otimes\Id)v\|$
if $v$ is microlocalized to a $\ep\delta$-neighborhood of
$\cG$, which
is the case for us with $v=A_r u$ in terms of support properties of
$a$.

Thus, we need\footnote{In the sense that if these hold, the right hand
side of \eqref{eq:tgt-prop-quad-terms}-\eqref{eq:tgt-prop-tgt-terms}
satisfy the desired estimates.}
\begin{equation}\begin{split}\label{eq:demanded-estimates}
&C\ep^{-1}\leq c_0\delta^{-1}\ep^{-1},
\ C\leq c_0\delta^{-1}\ep^{-1};\\
&C\ep^{-1}\leq c_0\delta^{-1/2}\ep^{-1/2},
\ C\leq c_0\delta^{-1/2}\ep^{-1/2};\\
&C\ep^{-1}\delta\leq c_0,\ C\delta\leq c_0;
\end{split}\end{equation}
here the semicolons correspond to the breakup corresponding to
the various lines of
\eqref{eq:tgt-prop-quad-terms}-\eqref{eq:tgt-prop-tgt-terms}.
By the first inequality on the last line, we must have
$\ep\geq (C/c_0)\delta$, by the first equation on the second line
$\ep\geq (C/c_0)^2\delta$, which implies the preceeding equation
as $c_0<C$. Assuming
\begin{equation}\label{eq:ep-estimate}
(C/c_0)^2\delta\leq\ep\leq 1,
\end{equation}
all the
preceeding equations hold for sufficiently small $\delta$, namely
\begin{equation}\label{eq:delta-estimate}
\delta<(c_0/C)^2,
\end{equation}
as we check below.

Thus, with $\ep,\delta$ satisfying \eqref{eq:ep-estimate} and
\eqref{eq:delta-estimate}, hence
$\delta^{-1}>(C/c_0)^2>C/c_0$,
\eqref{eq:tgt-prop-quad-terms}-\eqref{eq:tgt-prop-tgt-terms} give that
\begin{equation}\begin{split}\label{eq:tgt-prop-quad-terms-rev}
\ep^{-2}\delta^{-1}|f_{0,ij}|\leq C\ep^{-1}\leq c_0\delta^{-1}\ep^{-1},
\ |f_{1,ij}|\leq C\leq c_0\delta^{-1}\leq c_0\delta^{-1}\ep^{-1},
\end{split}\end{equation}
while
\begin{equation}\label{eq:tgt-prop-lin-terms-rev}
\ep^{-2}\delta^{-1}|f_{0,i}^\bullet|\leq C\ep^{-1}
\leq c_0\delta^{-1/2}\ep^{-1/2},
\ |f_{1,i}^\bullet|\leq C\leq c_0\delta^{-1/2}\leq c_0\delta^{-1/2}\ep^{-1/2},
\end{equation}
$\bullet=',''$, and
\begin{equation}\label{eq:tgt-prop-tgt-terms-rev}
\ep^{-2}\delta^{-1}|f_{0,0}|\leq C\ep^{-1}\delta\leq c_0,
\ |f_{1,0}|\leq C\delta\leq c_0.
\end{equation}

Again, the terms with $a^2$ in \eqref{eq:gend-commutator-tgt}
are negligible, for they can all be rewritten using
\eqref{eq:chi_0-chi_0-prime}.

Let $\tilde B_r\in\Psib^{s+1}(X;\Lambda X)$ with
$\sigma_{\bl,0}(\tilde B_r)=\tilde b_r\,\Id$,
\begin{equation*}
\tilde b_r=2w_r|\zeta_{n-k}|^{1/2}
(\cte\delta)^{-1/2}(\chi_0\chi_0')^{1/2}\chi_1\chi_2\in\Cinf(\Tb^*X\setminus o).
\end{equation*}
Again,
$\chi_0$ stands for $\chi_0(\cte^{-1}(2-\frac{\phi}{\delta}))$, etc.
Also, let $C\in\Psib^0(X;\Lambda X)$
have symbol $\sigma_{\bl,0}(C)=\psi\,\Id$ where
$\psi\in S^0_{\hom}(\Tb^*X\setminus o)$ is
identically $1$ on $U$ considered as a subset
of $\Sb^*X$.
Then
\begin{equation*}\begin{split}
&\imath(A^\dagger_r A_r)^* P-\imath P A^\dagger_r A_r \\
&\quad=\tilde B^*_r(C^*C+R_0+\sum_i (Q_i^* R_i+\tilde R_i Q_i)
+\sum_{ij} Q_i^* R_{ij} Q_j)\tilde B_r+R''+E+E'
\end{split}\end{equation*}
with
\begin{equation*}\begin{split}
&R_0\in\Psib^0(X;\Lambda X),\ R_i,\tilde R_i\in\Psib^{-1}(X;\Lambda X),
\ R_{ij}\in\Psib^{-2}(X;\Lambda X),\\
&
\ R''\in\Diff^2\Psib^{-2}(X;\Lambda X),
\ E,E'\in\Diff^2\Psib^{-1}(X;\Lambda X),
\end{split}\end{equation*}
with $\WFb'(E)\subset\tilde\eta^{-1}((-\infty,-\delta])
\cap U$, $\WFb'(E')\cap\dot\Sigma=\emptyset$,
and with
$r_0=\sigma_{\bl,0}(R_0)$, $r_i=\sigma_{\bl,-1}(R_i)$,
$\tilde r_i=\sigma_{\bl,-1}(\tilde R_i)$,
$r_{ij}\in\sigma_{\bl,-2}(R_{ij})$, satisfying
\begin{equation}\begin{split}\label{eq:r_0-r_ij-est}
&|r_0|\leq 2c_0
+C_2\delta\digamma^{-1},\\
&|\zeta_{n-k} r_i|,|\zeta_{n-k} \tilde r_i|\leq
2c_0\delta^{-1/2}\ep^{-1/2}+C_2\delta\digamma^{-1},\\
&|\zeta_{n-k}^2 r_{ij}|\leq 2c_0\delta^{-1}\ep^{-1}
+C_2\delta\digamma^{-1}.
\end{split}\end{equation}
Here the $C_2\delta\digamma^{-1}$ terms arise
by incorporating the $a^2$-terms of \eqref{eq:gend-commutator-tgt-symbols},
using \eqref{eq:chi_0-chi_0-prime}, as in the normal case.

These are exactly the formed-valued versions of
the result of the second displayed equation after
\cite[Equation~(7.16)]{Vasy:Propagation-Wave}, as corrected in
\cite{Vasy:Propagation-Wave-Correction}, with the small (at this point
arbitrary) constant $c_0$ replacing some constants given there
in terms of $\ep$ and $\delta$: in \cite{Vasy:Propagation-Wave-Correction}.
Thus,
the rest of the argument thus proceeds as in
\cite[Proof of Proposition~7.3]{Vasy:Propagation-Wave}, taking into
account \cite{Vasy:Propagation-Wave-Correction}. For example, the
estimate on $\tilde R_i$ takes the following form.

We let, as in \cite{Vasy:Propagation-Wave},
$T\in\Psib^{-1}(X;\Lambda X)$ be elliptic with principal symbol $|\zeta_{n-k}|^{-1}$
on a neighborhood of $\supp a$,
$T^-\in\Psib^1(X;\Lambda X)$ a parametrix, so
$T^-T=\Id+F$, $F\in\Psib^{-\infty}(X;\Lambda X)$.
In view of \eqref{eq:r_0-r_ij-est} there exists
$R'_i\in\Psib^{-1}(X;\Lambda X)$ such that for any $\gamma>0$,
\begin{equation*}\begin{split}
\|\tilde R_i w\|&=\|\tilde R_i (T^- T -F)w\|\leq\|(\tilde R_i T^-)(Tw)\|
+\|\tilde R_iFw\|\\
&\leq 2(2c_0\delta^{-1/2}\ep^{-1/2}+C_2\delta\digamma^{-1})
\|Tw\|+
\|R_i' Tw\|+\|\tilde R_iFw\|
\end{split}\end{equation*}
for all $w$ with $Tw\in L^2(X;\Lambda X)$, hence
\begin{equation*}\begin{split}
|\langle \tilde R_i Q_i v,v\rangle|\leq
&2(2c_0\delta^{-1/2}\ep^{-1/2}+C_2\delta\digamma^{-1})
\|TQ_i v\|\,\|v\|\\
&\qquad+2\gamma\|v\|^2+\gamma^{-1}\|R'_i TQ_i v\|^2+\gamma^{-1}\|F_i Q_iv\|^2,
\end{split}\end{equation*}
with $F_i\in\Psib^{-\infty}(X;\Lambda X)$.
Now we use that $a$ is microlocalized
in an $\ep\delta$-neighborhood of $\cG$, hence the same can
be arranged for $T$:
$\cG$ is given by $\rho_1=0$, $x=0$, and
we are microlocalized to the region where $|\rho_1|\leq 2\ep\delta$,
$|x|\leq 2\ep\delta$.
For $v=\tilde B_r u$,
Lemma~\ref{lemma:Dt-Dx} thus gives
(taking into account that we need to estimate
$\|TQ_i v\|$ rather than its square)
\begin{equation*}\begin{split}
&|\langle \tilde R_i Q_i v,v\rangle|\leq 4C_{\cG,K}
(2c_0\delta^{-1/2}\ep^{-1/2}+C_2\delta\digamma^{-1})
(\ep\delta)^{1/2}\|\tilde B_r u\|^2\\
&\qquad\qquad+C_0\gamma^{-1}\big(\|G\tilde B_r u\|_{H^1}^2
+\|\tilde B_r u\|_{H^1_{\loc}}^2+\|\tilde G Pu\|^2_{\dot H^{-1}}
+\|Pu\|^2_{H^1_\loc}\big)\\
&\qquad\qquad+3\gamma\|\tilde B_r u\|^2+\gamma^{-1}\|R'_i TD_{x_i} \tilde B_r u\|^2+\gamma^{-1}\|F_i D_{x_i}\tilde B_r u\|^2.
\end{split}\end{equation*}
The first term is the main term of interest, and
its coefficient satisfies
$$
4C_{\cG,K}(2c_0\delta^{-1/2}\ep^{-1/2}+C_2\delta\digamma^{-1})
(\ep\delta)^{1/2}\leq C_3(c_0+\delta\digamma^{-1})
$$
with some $C_3$ depending only on $C_{\cG,K},C,C_2$. We can proceed as
in \cite{Vasy:Propagation-Wave} if given a prescribed
quantity $c_1>0$ we can find $\delta_0>0$ such that this coefficient
is less than $c_1$ for $\delta\in(0,\delta_0)$
(and can do the same with analogous terms for $R_{ij}$, $R_i$ and $R_0$, which
however are easily handled by a similar argument).
But we can indeed achieve this by first choosing $c_0$ sufficiently
small,
then $\delta_0$ sufficiently small according \eqref{eq:delta-estimate}
(i.e.\ take $\delta_0<(c_0/C)^2$), finally
$\digamma$ sufficiently large. The proof is
thus finished as in
in \cite[Proof of Proposition~7.3]{Vasy:Propagation-Wave}, thus completing
the proof.

\end{proof}

\section{Propagation of singularities}\label{sec:prop-sing}
Recall from \eqref{eq:P-form} that
we assume that
\begin{equation}\begin{split}
&P=\Box+P_1:H^1_{R,\loc}(X;\Lambda X)\to \dot H^{-1}_{R,\loc}(X;\Lambda X),\\
&P_1\in\Diff^1(X;\Lambda X)+\Diffd^1(X;\Lambda X).
\end{split}\end{equation}
The theorem on propagation of singularities is the following.

\begin{thm}[Slightly strengthened restatement of Theorem~\ref{thm:prop-sing}]
\label{thm:prop-sing-restate}
Suppose that $P$ is as in \eqref{eq:P-form}, i.e.\ consider $P$
with relative boundary conditions, $Pu=f$.
If $u\in H^{1}_{R,\loc}(X;\Lambda X)$,
then for all $s\in\RR^+\cup\{\infty\}$ (with the convention $\infty+1=\infty$),
$$
\WFb^{1,s}(u)\setminus\WFb^{-1,s+1}(f)\subset\dot\Sigma,
$$
and it
is a union of maximally extended
generalized broken bicharacteristics of $P$ in
$\dot\Sigma\setminus\WFb^{-1,s+1}(Pu)$.

The same conclusion holds with relative boundary conditions replaced by
absolute boundary conditions.
\end{thm}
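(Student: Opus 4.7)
The plan is to combine the three microlocal ingredients already developed: elliptic regularity (Proposition~\ref{prop:elliptic}), the normal (hyperbolic) propagation estimate (Proposition~\ref{prop:normal-prop}), and the tangential (glancing) propagation estimate (Proposition~\ref{prop:tgt-prop}), and then run the Melrose--Sj\"ostrand--Lebeau scheme to stitch these local propagation facts into a statement about maximally extended \GBB. The containment $\WFb^{1,s}(u)\setminus\WFb^{-1,s+1}(f)\subset\dot\Sigma$ is immediate from Proposition~\ref{prop:elliptic}, since off of $\dot\Sigma$ any point $q$ with $q\notin\WFb^{-1,s+1}(f)$ satisfies $q\notin\WFb^{1,s}(u)$. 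Thus the real content is the statement that $\WFb^{1,s}(u)\setminus\WFb^{-1,s+1}(f)$ is a union of maximally extended \GBBsp inside $\dot\Sigma\setminus\WFb^{-1,s+1}(f)$.

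First I would reduce the problem to the following local claim: if $q_0\in\dot\Sigma\setminus\WFb^{-1,s+1}(f)$ and $q_0\in\WFb^{1,s}(u)$, then every open neighborhood $\cO$ of $q_0$ contains a \GBBsp segment $\gamma\colon[-\epsilon,\epsilon]\to\dot\Sigma$ through $q_0$ such that $\gamma(\tau)\in\WFb^{1,s}(u)$ for all $\tau$, and likewise for a backwards segment. Given this, the standard Zorn/extraction argument (as in Lebeau \cite[Proposition~VII.1]{Lebeau:Propagation}) produces a maximally extended \GBBsp through $q_0$ lying entirely in $\WFb^{1,s}(u)\setminus\WFb^{-1,s+1}(f)$. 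For this we perform an induction on the codimension of the boundary face $F$ containing the base point of $q_0$: at interior points we use the classical H\"ormander theorem; at a smooth boundary hypersurface and glancing point we use the Melrose--Sj\"ostrand result, which is subsumed by Proposition~\ref{prop:tgt-prop} (together with Definition~\ref{def:gen-br-bich}(2)); and at a codimension $\geq 2$ corner we proceed as follows.

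At a corner point $q_0\in \Tb^*_{F_{\reg}}X$, split into cases. If $q_0\in\cH$, Proposition~\ref{prop:normal-prop} together with Remark~\ref{rem:normal-remark} shows that wave front set cannot concentrate at $q_0$ coming from either the $\eta<0$ or $\eta>0$ side of a conic neighborhood; combining this with the inductive hypothesis on lower codimensional strata (so that propagation is already established on the union of those strata and the open interior, where $F$ itself is the most singular piece near $q_0$) produces the required short \GBBsp segment: by Lemma~\ref{lemma:gen-br-bichar}(2), a \GBBsp through a hyperbolic point immediately leaves $\Tb^*_{F_{\reg}}X$, so one constructs the segment by concatenating a limit of inward-pointing bicharacteristics from one side with an outward-pointing bicharacteristic on the other. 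If $q_0\in\cG\cap\Tb^*_{F_{\reg}}X$, then $\hat\pi^{-1}(q_0)$ is a single point $\alpha_0$ and Proposition~\ref{prop:tgt-prop} provides the quantitative tangential propagation statement: $\WFb^{1,s}(u)$ cannot lie only on one side of $q_0$ along the direction $W_0=\tilde\pi_*\sH_p(\alpha_0)$ within the allowed $O(\ep\delta)$-tube, which by Lemma~\ref{lemma:gen-br-bichar}(1) is exactly the \GBBsp direction at $q_0$.

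The main obstacle, and the step that requires the most care, is running the Lebeau-type compactness/extraction argument uniformly near the corner. The subtlety is that the constants in Proposition~\ref{prop:tgt-prop} are uniform in $q_0$ over compact subsets $K\subset\cG\cap\Tb^*_{F_{\reg}}X$, but the \GBBsp may accumulate at deeper strata or at the boundary of the coordinate chart, so one needs to work with compact subsets of $\dot\Sigma$ (modulo the $\RR^+$-action, i.e.\ in $\Sb^*X$) and invoke the inductive hypothesis precisely at the points where the constructed segment threatens to escape. This is handled exactly as in \cite[Section~8]{Vasy:Propagation-Wave} and \cite{Vasy:Diffraction-edges}: one shows that the set of \GBBsp segments through $q_0$ that stay in $\WFb^{1,s}(u)$ is nonempty, closed under locally uniform limits (using continuity of \GBB), and can be extended as long as the endpoint lies in $\dot\Sigma\setminus\WFb^{-1,s+1}(f)$. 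Finally, the absolute boundary condition case is obtained by applying the Hodge star $*$, which as noted in the introduction intertwines the relative and absolute problems and commutes with $\WFb$ computations since it is a smooth bundle isomorphism.
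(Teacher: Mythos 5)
Your proposal follows essentially the same route as the paper: the paper's proof simply observes that Propositions~\ref{prop:normal-prop} and \ref{prop:tgt-prop} are exact analogues of the corresponding scalar estimates and then invokes the argument of \cite[Proof of Theorem~8.1]{Vasy:Propagation-Wave}, i.e.\ the Melrose--Sj\"ostrand scheme as modified by Lebeau, together with Proposition~\ref{prop:elliptic} for the containment in $\dot\Sigma$ and the Hodge star for the absolute case. Your sketch just spells out that cited argument, so it is correct and not a genuinely different approach.
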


\begin{proof}
The proof proceeds as in \cite[Proof of Theorem~8.1]{Vasy:Propagation-Wave},
since the Propositions \ref{prop:normal-prop}
and \ref{prop:tgt-prop} are complete
analogues of \cite[Proposition~6.2]{Vasy:Propagation-Wave}
and \cite[Proposition~7.3]{Vasy:Propagation-Wave}.
Given the results of the previous sections,
this argument itself is only a slight modification of an argument originally
due to Melrose and Sj\"ostrand \cite{Melrose-Sjostrand:I}, as presented
by Lebeau \cite{Lebeau:Propagation}.
\end{proof}

One can relax the hypotheses of this Theorem in order to allow solutions
of $Pu=f$ with negative order of $\bl$-regularity relative to
$H^1_{R,\loc}(X;\Lambda X)$. This is of importance because this way
one can deal with the (say, forward) fundamental solution of the
wave equation directly.

The b-Sobolev spaces with negative order relative
to $H^1(X)$ and $\dot H^{-1}(X)$ were defined
in \cite[Definition~3.15]{Vasy:Propagation-Wave}; we refer
to Section~3 of \cite{Vasy:Propagation-Wave} for most details. Here we first
state the bundle valued analogue:

\begin{Def}\label{Def:H1m-neg}
Let $E$ be a vector bundle over $X$.
Let $m<0$, and $A\in\Psib^{-m}(X;E)$ be elliptic on $\Sb^*X$
with proper support.
We let $H^{1,m}_{\bl,\compl}(X;E)$
be the space of all $u\in\ddist(X;E)$ of the
form $u=u_1+Au_2$ with $u_1,u_2\in H^1_\compl(X;E)$.
We let
\begin{equation*}\begin{split}
\|u\|_{H^{1,m}_{\bl,\compl}(X;E)}
=\inf\{\|u_1\|_{H^1(X;E)}
+\|u_2\|_{H^1(X;E)}:\ u=u_1+Au_2\}.
\end{split}\end{equation*}

We also let $H^{1,m}_{\bl,\loc}(X;E)$ be the space of all
$u\in\ddist(X;E)$
such that $\phi u\in H^{1,m}_{\bl,\compl}(X;E)$ for all
$\phi\in\Cinf_\compl(X)$.

We define $\dot H^{-1,m}_{\bl,\compl}(X;E)$ and
$\dot H^{-1,m}_{\bl,\loc}(X;E)$
analogously, replacing $H^1(X;E)$
$\dot H^{-1}(X;E)$
throughout the
above discussion.
\end{Def}

\begin{rem}\label{rem:scalar-definability}
For $u$ supported in a coordinate chart in which $E$ is trivialized,
without loss of generality we may require that $A$ is scalar
in that particular trivialization;
as shown in \cite[Section~3, following Remark~3.16]{Vasy:Propagation-Wave}
all choices of $A$
are equivalent (as long as they are elliptic on a neighborhood of
$\supp u$).

Indeed, this -- together with the locality of these spaces, i.e.\ that
they are preserved by multiplication by $\phi\in\CI(X)$ --
shows that $H^{1,m}_{\bl,\compl}(X;E)$ could also be defined
by localization, and requiring that in local coordinates in which $E$
is trivial, the sections are $N$-tuples of $H^{1,m}_{\bl,\compl}(X)$
functions with $N$ being the rank of $E$.
\end{rem}

The restriction map to a boundary hypersurface $\hsf$,
$\gamma_{\hsf}:\CI(X;E)\to \CI(\hsf;E_{\hsf})$ extends to
a map
\begin{equation*}\begin{split}
&\gamma_{\hsf}:
H^{1,m}_{\bl,\compl}(X;E)\to \ddist_{\compl}(\hsf;E_{\hsf}),\\
&\gamma_{\hsf}(u_1+Au_2)=\gamma_{\hsf}(u_1)+\hat N_{\hsf}(A)(0)
\gamma_{\hsf}(u_2);
\end{split}\end{equation*}
see \cite[Remark~3.16]{Vasy:Propagation-Wave}.
In particular, we make the following definition:

\begin{Def}\label{Def:H1m-neg-R}
If $E$ is a vector bundle over $X$, $m<0$,
$$
\dot H^{1,m}_{\bl,\compl}(X;E)=\{u\in H^{1,m}_{\bl,\compl}(X;E):
\ \forall\hsf\in\pa_1(X),\ \gamma_{\hsf}(u)=0\}
$$
and
\begin{equation*}\begin{split}
H^{1,m}_{\bl,R,\compl}(X;\Lambda X)=\{u\in & H^{1,m}_{\bl,\compl}(X;\Lambda X):
\\
& \forall\hsf\in\pa_1(X),\ \gamma_{\hsf}(u)\in\ddist(\hsf;\Lambda_{\hsf,N}X)\}.
\end{split}\end{equation*}

The local spaces are defined analogously.
\end{Def}

Equivalently, as follows from the corresponding
statements for $\dot H^1(X;E)=H^1_0(X;E)$ resp.\ $H^1_R(X;\Lambda X)$,
$\dot H^{1,m}_{\bl,\compl}(X;E)$
is the closure of $\dCI_{\compl}(X;E)$ and
$H^{1,m}_{\bl,R,\compl}(X;\Lambda X)$ is the closure
of $\CI_{R,\compl}(X;\Lambda X)$ in the $H^{1,m}$ topology.

Also, equivalently, $H^{1,m}_{\bl,R,\compl}(X;\Lambda X)$
is the space of all $u\in\ddist_{\compl}(X;\Lambda X)$ of the
form $u=u_1+Au_2$ with $u_1,u_2\in H^1_{R,\compl}(X;\Lambda X)$ if
$A$ satisfies \eqref{eq:preserve-normal-forms} (this can always be
assumed locally by Remark~\ref{rem:scalar-definability}). This follows by
considering a parametrix $G$ for $A$, $E=GA-\Id,F=AG-\Id
\in\Psib^{-\infty}(X;\Lambda X)$, such that
$G$ also satisfies \eqref{eq:preserve-normal-forms}. Then $u=u_1'+Au_2'$ with
$u_k'\in H^1_{\compl}(X;\Lambda X)$, so
\begin{equation*}\begin{split}
&u=AGu-Fu=A(Gu_1'+GAu_2')-Fu_1'-FAu_2'
=u_1+Au_2,\\
& u_1=-Fu=-Fu_1'-FAu_2',\ u_2=Gu=Gu_1'+GAu_2',
\end{split}\end{equation*}
where the first expression for $u_k$ shows $\gamma_{\hsf}(u_k)=0$,
and the second shows $u_k\in H^1_{\compl}(X;\Lambda X)$.

We still need to define the negative $\bl$-regularity version
of $\dot H^{-1}_R(X;\Lambda X)$. As $\dot H^{-1}_R(X;\Lambda X)$ is a quotient
space of $\dot H^{-1}(X;\Lambda X)$, just like
$\ddist_R(X;\Lambda X)$ (the dual of $\CI_R(X;\Lambda X)$) is a quotient
space of $\ddist(X;\Lambda X)$, we proceed as follows.

\begin{Def}\label{Def:H-1m-neg}
We let
$$
\dot H^{-1,m}_{\bl,R,\compl}(X;\Lambda X),\ \text{resp.}
\ \dot H^{-1,m}_{\bl,R,\loc}(X;\Lambda X),
$$
be the image of $\dot H^{-1,m}_{\bl,\compl}(X;\Lambda X)$,
resp.\ $\dot H^{-1,m}_{\bl,\loc}(X;\Lambda X)$,
in $\ddist_R(X;\Lambda X)$.
\end{Def}

If $A\in\Psibc(X;\Lambda X)$ with normal operators of $A^*$ satisfying
\eqref{eq:preserve-normal-forms} then
$$
A^*:\CI_R(X;\Lambda X)\to\CI_R(X;\Lambda X),
$$
so $A$ actually descends to the quotient space
$\ddist_R(X;\Lambda X)$,
cf.\ \eqref{eq:Psibc-dist}-\eqref{eq:Psibc-dist-def}.
We conclude the following:

\begin{lemma}\label{lemma:H-1R-form}
Let $m<0$, and $A\in\Psib^{-m}(X;\Lambda X)$ be elliptic on $\Sb^*X$
with proper support and with the normal operators $\hat N_{\hsf}(A^*)$
of $A^*$
satisfying \eqref{eq:preserve-normal-forms}.
Then $u\in \dot H^{-1,m}_{\bl,R,\compl}(X;\Lambda X)$ if and only if
$u=u_1+Au_2$ with $u_1,u_2\in \dot H^{-1}_{R,\compl}(X;\Lambda X)$.
\end{lemma}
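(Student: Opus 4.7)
The plan is to deduce the lemma from a single descent statement: that the quotient map $\rho_R\colon \ddist(X;\Lambda X)\to \ddist_R(X;\Lambda X)$ (dual to the inclusion $\CI_R\hookrightarrow\CI$) intertwines the action of $A$ on $\ddist$ (in the sense of \eqref{eq:Psibc-dist-def}) with the descended action of $A$ on $\ddist_R$. Once this intertwining is in hand, the lemma reduces to unwinding Definitions~\ref{Def:H1m-neg} and \ref{Def:H-1m-neg}.

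First I would verify the intertwining. The hypothesis that the normal operators of $A^*$ satisfy \eqref{eq:preserve-normal-forms} is exactly the condition that $A^*\colon\CI_R(X;\Lambda X)\to\CI_R(X;\Lambda X)$, so via \eqref{eq:Psibc-dist-def} we get a map $A\colon\ddist_R(X;\Lambda X)\to\ddist_R(X;\Lambda X)$. To check $\rho_R\circ A=A\circ\rho_R$ on $\dot H^{-1}_\compl(X;\Lambda X)$, suppose $v\in\dot H^{-1}_\compl$ with $\rho_R(v)=0$, i.e.\ $v(\phi)=0$ for every $\phi\in\CI_R(X;\Lambda X)$. Then for such $\phi$, $(Av)(\phi)=v(A^*\phi)$, and $A^*\phi\in\CI_R(X;\Lambda X)$ by hypothesis, so $(Av)(\phi)=0$. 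Hence $\rho_R(Av)=0$, and the descent is well-defined; proper support of $A$ guarantees that compact supports are preserved throughout.

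Next I would prove the two inclusions. For the ``if'' direction, suppose $u=u_1+Au_2$ with $u_j\in\dot H^{-1}_{R,\compl}(X;\Lambda X)$. Since $\dot H^{-1}_{R,\compl}$ is by definition $\rho_R(\dot H^{-1}_\compl)$, we may lift each $u_j$ to some $v_j\in\dot H^{-1}_\compl(X;\Lambda X)$ with $\rho_R(v_j)=u_j$. Then $v:=v_1+Av_2$ lies in $\dot H^{-1,m}_{\bl,\compl}(X;\Lambda X)$ by Definition~\ref{Def:H1m-neg}, and the intertwining gives $\rho_R(v)=\rho_R(v_1)+A\rho_R(v_2)=u_1+Au_2=u$, so $u\in\dot H^{-1,m}_{\bl,R,\compl}$ by Definition~\ref{Def:H-1m-neg}. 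Conversely, if $u\in\dot H^{-1,m}_{\bl,R,\compl}$, then $u=\rho_R(v)$ for some $v\in\dot H^{-1,m}_{\bl,\compl}$; writing $v=v_1+Av_2$ with $v_j\in\dot H^{-1}_\compl$ and applying $\rho_R$ (using the intertwining) gives $u=\rho_R(v_1)+A\rho_R(v_2)=u_1+Au_2$ with $u_j:=\rho_R(v_j)\in\dot H^{-1}_{R,\compl}$.

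No genuine analytic obstacle arises: there is no parametrix step to perform here (unlike the analogous characterization of $H^{1,m}_{\bl,R,\compl}$ established just before the lemma), because the definition of $\dot H^{-1,m}_{\bl,\compl}$ already presents its elements in the form $v_1+Av_2$. The only point that requires care is the compatibility of $A$ with the quotient $\rho_R$, and that is handled cleanly by the assumption \eqref{eq:preserve-normal-forms} on $\hat N_\hsf(A^*)$ together with the defining duality $\langle Au,\phi\rangle=\langle u,A^*\phi\rangle$.
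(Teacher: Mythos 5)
Your proposal is correct and follows essentially the same route as the paper: descend $A$ to $\ddist_R(X;\Lambda X)$ using that $A^*$ preserves $\CI_R(X;\Lambda X)$ (the content of \eqref{eq:preserve-normal-forms} for $\hat N_\hsf(A^*)$), and then push/lift the decomposition $v=v_1+Av_2$ through the quotient map, exactly as in the paper's proof. The one detail the paper makes explicit that you gloss over is the appeal to Remark~\ref{rem:scalar-definability}: the definition of $\dot H^{-1,m}_{\bl,\compl}(X;\Lambda X)$ fixes some elliptic operator which need not be the $A$ of the lemma, so your step ``writing $v=v_1+Av_2$'' (and your remark that no parametrix argument is needed) implicitly relies on the independence of the space from the choice of $A$.
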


\begin{proof}
If $u'\in\dot H^{-1,m}_{\bl,\compl}(X;\Lambda X)$ with the image of
$u'$ in $\ddist_R(X;\Lambda X)$ being $u$, then (with this $A$,
see Remark~\ref{rem:scalar-definability} on the independence of the
definition from the choice of $A$) $u'=u'_1+Au'_2$,
$u'_k\in \dot H^{-1}(X;\Lambda X)$. Let $u_k$ be the image of
$u'_k$ in $\dot H^{-1}_R(X;\Lambda X)\subset\ddist_R(X;\Lambda X)$;
the claim then follows immediately as $A$ acts on $\ddist_R(X;\Lambda X)$.
The converse follows by letting $u'_k\in \dot H^{-1}_{\compl}(X;\Lambda X)$
have image $u_k\in \dot H^{-1}_{R,\compl}(X;\Lambda X)$.
\end{proof}

In view of Definition~\ref{Def:Diff-Psib} and the following remarks,
namely that one can rearrange the order of factors in $\Diff(X)$ and
$\Psib(X)$ (changing the factors but not their (pseudo)differential
orders), without affecting
the principal symbol, and without affecting the mapping property
\eqref{eq:preserve-normal-forms} of normal operators (due to the
explicit form of $\tilde A$ and $\tilde B$ in \eqref{eq:A-At-Bt};
by Lemma~\ref{lemma:H-1R-form}
we presently need that the adjoints satisfy \eqref{eq:preserve-normal-forms}),
we deduce that any $Q\in\Diff^2(X;\Lambda X)$ gives
a map
$$
Q:H^{1,m}_{\bl,\loc}(X;\Lambda X)\to H^{-1,m}_{\bl,R,\loc}(X;\Lambda X).
$$
In particular, this is the case for
$P$ as in \eqref{eq:P-form}, and thus
\begin{equation}\label{eq:P-form-neg}
P:H^{1,m}_{\bl,R,\loc}(X;\Lambda X)\to H^{-1,m}_{\bl,R,\loc}(X;\Lambda X).
\end{equation}

We also recall from \cite[Section~3]{Vasy:Propagation-Wave} the
wave front set with negative order of regularity relative to $H^1$ and
$\dot H^{-1}$.
Indeed, since any $A\in\Psib^m(X;\Lambda X)$ defines a map
$A:\ddist(X;\Lambda X)\to\ddist(X;\Lambda X)$, our
definition of the wave front set makes sense for $m<0$ as well; it
is independent of $s$ if we take $u\in H^{1,s}_{\bl,\loc}(X;\Lambda X)$ since
the action of $\Psib(X;\Lambda X)$ is well-defined on the larger spaces
$\ddist(X;\Lambda X)$ already.

\begin{Def}\label{Def:WFb-neg}
Suppose $u\in H^{1,s}_{\bl,\loc}(X;\Lambda X)$ for some $s\leq
0$, and suppose that $m\in\RR$.
We say that $q\in\Tb^*X\setminus o$ is not in $\WFb^{1,m}(u)$ if
there exists $A\in\Psib^m(X;\Lambda X)$ such that
$\sigma_{\bl,m}(A)(q)$ is invertible
and $Au\in H^1(X;\Lambda X)$.

For $m=\infty$, we say that $q\in\Tb^*X\setminus o$ is not in $\WFb^{1,m}(u)$
if there exists $A\in\Psib^0(X;\Lambda X)$
such that $\sigma_{\bl,0}(A)(q)$ is invertible
and $LAu\in H^1(X;\Lambda X)$ for all $L\in\Diffb(X;\Lambda X)$, i.e.\ if
$Au\in H^{1,\infty}_\bl(X;\Lambda X)$.

The wave front set $\WFb^{-1,m}(u)$ relative to $\dot H^{-1}(X;\Lambda X)$
is defined similarly for $u\in \dot H^{-1,s}_{\bl,\loc}(X;\Lambda X)$,
and the same also holds for $u\in \dot H^{-1,s}_{\bl,R,\loc}(X;\Lambda X)$
except that we must require $A$ such that $A^*$ satisfies
\eqref{eq:preserve-normal-forms}.
\end{Def}

\begin{rem}
When $A$ as in the definition of $\WFb^{1,m}(u)$ exists,
there also exists $A\in\Psib^m(X;\Lambda X)$ which is
elliptic at
$q$ and which satisfies \eqref{eq:preserve-normal-forms}.
Indeed, one may arrange that $A$ is supported in a local coordinate chart
and is scalar in the trivialization of $\Lambda X$.
\end{rem}

With this background we have the following strengthening of
Theorem~\ref{thm:prop-sing-restate}.

\begin{thm}[Negative order version of Theorem~\ref{thm:prop-sing-restate}]
\label{thm:prop-sing-restate-neg}
Suppose that $P$ is as in \eqref{eq:P-form} considered
as a map \eqref{eq:P-form-neg}, i.e.\ consider $P$
with relative boundary conditions, $Pu=f$.

If $u\in H^{1,m}_{\bl,R,\loc}(X;\Lambda X)$ for some $m\leq 0$,
then for all $s\in\Real\cup\{\infty\}$,
$$
\WFb^{1,s}(u)\setminus\WFb^{-1,s+1}(f)\subset\dot\Sigma,
$$
and it
is a union of maximally extended
generalized broken bicharacteristics of $P$ in
$\dot\Sigma\setminus\WFb^{-1,s+1}(Pu)$.

The same conclusion holds with relative boundary conditions replaced by
absolute boundary conditions.
\end{thm}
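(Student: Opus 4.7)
My plan is to mimic the proof of Theorem~\ref{thm:prop-sing-restate}, extending each ingredient to handle the weaker a priori regularity $u\in H^{1,m}_{\bl,R,\loc}(X;\Lambda X)$. The proof of the positive-order theorem combined three pieces: microlocal elliptic regularity (Proposition~\ref{prop:elliptic}), the propagation propositions at normal and tangential points (Propositions~\ref{prop:normal-prop} and \ref{prop:tgt-prop}), and the Melrose-Sj\"ostrand-Lebeau geometric iteration along generalized broken bicharacteristics. The iteration is insensitive to the order of regularity of $u$, and Proposition~\ref{prop:elliptic} extends in a standard manner following the scalar case treated in \cite[Section~3]{Vasy:Propagation-Wave}. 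Thus, the real content is in extending Propositions~\ref{prop:normal-prop} and \ref{prop:tgt-prop}.

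To extend the propagation propositions, first I would modify the regularizer. In the original proofs, the family $A_r = A\Lambda_r$ with $\Lambda_r$ of symbol $|\zeta_{n-k}|^{s+1/2}(1+r|\zeta_{n-k}|^2)^{-s}$ is arranged so that, for $r>0$, $A_r^\dagger A_r u \in H^1_{R,\loc}$ whenever $u\in H^1_{R,\loc}$, making the pairing identity \eqref{eq:pairing-identity} well defined. For $u\in H^{1,m}_{\bl,R,\loc}$ with $m\leq 0$, I would instead take $\Lambda_r$ with symbol $|\zeta_{n-k}|^{s+1/2}(1+r|\zeta_{n-k}|^2)^{-N}$ for $N$ sufficiently large depending on $s$ and $m$, so that $A_r^\dagger A_r$ gains enough $\bl$-order to map $H^{1,m}_{\bl,R,\loc}$ into $H^1_{R,\loc}$ for each $r>0$. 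The family remains uniformly bounded in $\Psibc^{s+1/2}$ with the same principal symbol at $r=0$, so the commutator expansion of Proposition~\ref{prop:gend-comm} is unchanged at the level of principal symbols. The local norm $\|u\|_{H^1_{\loc}}$ appearing on the right hand side of Lemmas~\ref{lemma:Dirichlet-form}--\ref{lemma:Dt-Dx} and in the bounds culminating in \eqref{eq:P-comm} is replaced by $\|u\|_{H^{1,m}_{\bl,\loc}}$, which is finite by assumption. Taking $r\to 0$ via the usual weak-$*$ compactness argument then yields the desired propagation estimate.

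Once Propositions~\ref{prop:normal-prop} and \ref{prop:tgt-prop} are extended in this way, the Melrose-Sj\"ostrand-Lebeau iteration runs verbatim as in the proof of Theorem~\ref{thm:prop-sing-restate}, and the conclusion follows. The main obstacle I anticipate is in the tangential case, where the delicate numerology of \eqref{eq:tgt-prop-quad-terms}--\eqref{eq:ep-estimate} intertwines $c_0$, $\cte$, $\ep$ and $\delta$: the order-shift of the regularizer introduces new subprincipal contributions that must be absorbed into the error term $F_r$ of Proposition~\ref{prop:gend-comm} without disturbing the choice of these constants. This is analogous to the adjustment in \cite[Section~3]{Vasy:Propagation-Wave} for the scalar case, but requires careful bookkeeping of the endomorphism-valued subprincipal terms that arise from the non-scalar structure identified in Proposition~\ref{prop:adjoints}.
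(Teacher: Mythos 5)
Your proposal is correct and is essentially the paper's argument: the Melrose--Sj\"ostrand--Lebeau iteration and the elliptic estimates are untouched, and the only substantive change is that the background terms $\|u\|_{H^1_{\loc}}$ and $\|Pu\|_{\dot H^{-1}_{R,\loc}}$ in Lemmas~\ref{lemma:Dirichlet-form} and~\ref{lemma:Dt-Dx}, and hence in the estimates underlying Propositions~\ref{prop:normal-prop} and~\ref{prop:tgt-prop}, are replaced by the $H^{1,m}_{\bl,\loc}$, resp.\ $\dot H^{-1,m}_{\bl,R,\loc}$, norms, with the microlocal terms ($\|Gu\|_{H^1}$, $\|\tilde G Pu\|_{\dot H^{-1}_R}$) unchanged. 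The one point where you diverge is the regularizer: the paper keeps $\Lambda_r$ exactly as in Propositions~\ref{prop:normal-prop}--\ref{prop:tgt-prop} and justifies the pairings and the uniform bounds as $r\to 0$ by invoking \cite[Lemma~3.18]{Vasy:Propagation-Wave} in place of \cite[Lemma~3.13]{Vasy:Propagation-Wave} (residual, b-order $-\infty$, terms map $H^{1,m}_{\bl,\loc}$ into $H^{1,\infty}_{\bl,\loc}\subset H^1_{\loc}$, while on $\WFb'(\cA)$ the inductive wave front hypothesis supplies the regularity). Your alternative of raising the exponent to a large $N$ also works: the extra contributions from differentiating the weight are again of the form $a^2\tilde c_r$ with $\tilde c_r$ uniformly bounded in $r$ (now with a constant proportional to $N$, which is fixed before $\digamma$), so they are absorbed via \eqref{eq:chi_0-chi_0-prime} by choosing $\digamma$ large after $c_0$ and $\delta$; thus the tangential numerology you worry about is not disturbed, since those constants never see the weight. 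Note, however, that even with the stronger regularizer the uniform-in-$r$ control needed for the weak-$*$ limit still rests on the Lemma~3.18-type statement relating the wave front hypothesis on $\WFb'(\cA)$ to the family acting on $H^{1,m}_{\bl,\loc}$, so the modification buys convenience for fixed $r>0$ but does not remove that ingredient.
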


\begin{proof}
As noted in \cite[Remark~8.3]{Vasy:Propagation-Wave}, the only
part of our estimates that need changing is the treatment of the
`background terms', such as $\|u\|_{H^1_{\loc}}$ in
Lemma~\ref{lemma:Dirichlet-form} (and Lemma~\ref{lemma:Dt-Dx}),
and $\|Pu\|_{H^{-1}_{R,\loc}(X)}$. Explicitly, we need to
replace the
$$
H^1_{\loc}(X;\Lambda X),\ \text{resp.}
\ H^{-1}_{R,\loc}(X;\Lambda X),
$$
norms by
the
$$
H^{1,m}_{\bl,\loc}(X;\Lambda X),
\ \text{resp.}\ H^{-1,m}_{\bl,R,\loc}(X;\Lambda X),
$$
norms.
The microlocal norms,
in which we are gaining regularity, such as those of $Gu$ and $\tilde G Pu$
in Lemma~\ref{lemma:Dirichlet-form} and Lemma~\ref{lemma:Dt-Dx}
are {\em unchanged.} Indeed, now
we merely need to apply
\cite[Lemma~3.18]{Vasy:Propagation-Wave} in place of
\cite[Lemma~3.13]{Vasy:Propagation-Wave}.
\end{proof}

\section{Other bundles and boundary conditions}\label{sec:other}
In this section we briefly discuss other problems for which our
methods work.

The important ingredients
are the following:
\begin{enumerate}
\item
Suppose $E$ is a vector bundle over a manifold with corners $X$.
Equip $E$ with a Hermitian inner product, $(.,.)_{\tilde H}$, and
equip $X$ with a Lorentzian metric $h$. Assume that every proper boundary
face $F$ of $X$ is time-like, i.e.\ the dual metric $H$ corresponding to $h$
restricts to be negative definite on $N^*F$.
\item
Assume that for each boundary hypersurface
$\hsf$, $E^{\hsf}$ is a subbundle of $E|_{\hsf}$, $\cE=\{E_{\hsf}:\ \hsf\in
\pa_1(X)\}$. Let
$$
\CI_{\cE}(X;E)=\{u\in\CI(X;E):\ \forall \hsf\in\pa_1(X),
\ u|_\hsf\in\CI(X;E^{\hsf})\},
$$
and define $H^1_{\cE}(X;E)$ similarly.
\item
Suppose that the boundary conditions are locally trivializable, i.e.\ for
each $q_0\in \pa X$, with $\hsf_j$, $j=1,\ldots,k$ the boundary
hypersurfaces through $q_0$, there exists a neighborhood $\cU$ of $q_0$ in $X$,
a trivialization of $E|_{\cU}$,
$E|_{\cU}\to \cU\times \RR^N$, $N=\dim E_{q_0}$, and
index sets $J_j\subset\{1,\ldots,N\}$
for $j=1,\ldots,k$,
such that for each $j$ and
at each $q\in \cU\cap \hsf_j$, and for each $\alpha\in E_q$,
$$
\alpha\in E^{\hsf_j}_q\ \text{if and only if}\ \alpha_m=0\ \text{for all}
\ m\in J_j,
$$
$m\in J_j$, where $\alpha=(\alpha_1,\ldots,\alpha_N)$ with respect
to the trivialization (see \eqref{eq:form-triv-corner} for the concrete
case of differential forms).
\item
Consider
$$
\nabla\in\Diff^1(X;E,E\otimes T^*X)
$$
with principal symbol $\sigma_1(\nabla)=\imath\Id\otimes\xi$
(cf.\ \eqref{eq:nabla-symbol}). The inner product on
$E\otimes T^*X$ is given by $\tilde H\otimes H$, $H$ the dual metric
of $h$, cf.\ \eqref{eq:twisted-inner-product},
and is thus not positive definite.
\item
Let $F$ be another vector bundle over $X$, with a non-degenerate
(but not necessarily positive definite) inner product. Then
first order differential operators $Q\in\Diff^1(X;E,F)$ give rise
to maps $Q:H^1_{\cE}(X;E)\to L^2(X;F)$. We thus obtain an adjoint
$$
Q^*:L^2(X;E)=L^2(X;F)^*\to (H^1_{\cE}(X;E))^*\equiv\dot H^{-1}_{\cE}(X;E).
$$
Let $\Diffd^1(X;F,E)$ denote the set of these Banach space adjoints, and
let $\Diffs^2(X;E)$ denote the set of operators of the form
$$
\sum Q_j P_j,\ Q_j\in\Diffd^1(X;F,E),\ P_j\in\Diff^1(X;E,F),
$$
with the sum finite if $X$ is compact, locally finite in general;
cf.\ \eqref{eq:Def-Diffs}. Here $F$ is not important when discussing
$\Diffs^2(X;E)$; one could replace $F$ by the trivial bundle at the
cost of dealing with additional locally finite sums; in particular
the inner product on $F$ is a matter of convenience to state the
form of $P$ below using $\nabla$.
\item
Assume that $P\in\Diffs^2(X;E)$ is of the form
\begin{equation}\label{eq:P-form-general}
P=\nabla^*\nabla+P_1,\ P_1\in\Diff^1(X;E)+\Diffd^1(X;E),
\end{equation}
cf.\ \eqref{eq:P-form}. Note that the principal symbol of $P$ in the
standard sense is $p\,\Id$, where $p$ is the metric function of the
dual Lorentzian metric $H$.
\item
Consider solutions of $Pu=f$ in $u\in H^1_{\cE,\loc}(X;E)$; here
$f\in\dot H^{-1}_{\cE,\loc}(X;E)$ is given.
\end{enumerate}

Under these assumptions, using essentially the same arguments as in the
previous sections,
we have the following analogue of
Theorem~\ref{thm:prop-sing-restate}:

\begin{thm}
\label{thm:prop-sing-general}
Suppose that $P$ is as in \eqref{eq:P-form-general}.
If $u\in H^{1}_{\cE,\loc}(X;E)$,
then for all $s\in\RR^+\cup\{\infty\}$ (with the convention $\infty+1=\infty$),
$$
\WFb^{1,s}(u)\setminus\WFb^{-1,s+1}(f)\subset\dot\Sigma,
$$
and it
is a union of maximally extended
generalized broken bicharacteristics of $P$ in
$\dot\Sigma\setminus\WFb^{-1,s+1}(Pu)$.
\end{thm}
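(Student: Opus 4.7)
The plan is to observe that Theorem~\ref{thm:prop-sing-general} follows by inspection of the proof of Theorem~\ref{thm:prop-sing-restate} (and consequently of Theorem~\ref{thm:prop-sing}), since the arguments in Sections~\ref{sec:commutator}--\ref{sec:prop-sing} never used anything specific to $\Lambda X$ and relative boundary conditions beyond the seven ingredients listed before the statement. I would proceed by re-deriving each structural lemma in this general framework.

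First I would construct an analogue of the auxiliary Riemannian pairing $\tilde H$ on $E$ and of the isomorphism $J=H^{-1}\tilde H$. Here $\tilde H$ is already given as part of the data. The key point in the form case was that $J$ preserves the boundary condition; by the local trivializability hypothesis (3) we may choose the Hermitian fiber metric to be the Euclidean one in the good trivialization near $q_0$, in which case the subbundle $E^{\hsf_j}$ is the span of the basis vectors indexed by the complement of $J_j$, and any linear map $J$ that is block-diagonal with respect to the splitting $E^{\hsf_j}\oplus (E^{\hsf_j})^\perp$ at $\hsf_j$ preserves the boundary condition. This can be arranged locally (and patched in, with the non-preserving error absorbed into subprincipal terms) exactly as at the end of Section~\ref{sec:idea}. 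With this $J$ in hand, the computation in Lemma~\ref{lemma:Box-form}, which only uses $\nabla^*\nabla$ structure, the vanishing of cross-terms $C_{ij}(0,y)=0$ in the metric, and a density/integration-by-parts argument valid once $J$ preserves $\cE$-boundary conditions, yields $P=\sum Q_i^*A_{ij}Q_j+\sum(M_iQ_i+Q_i^*M_i')+\tilde P$ in the same form, with $A_{ij}=H_{ij}\,\Id_E$ principally and $\tilde p=\sum B_{ij}\zeta_i\zeta_j$.

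Next, the commutator result Proposition~\ref{prop:gend-comm} is proved by straightforward symbolic calculus together with the single structural observation of Proposition~\ref{prop:adjoints} about the error $A^\dagger - A$ when $A$ is scalar in the trivialization. Both steps depend only on having a local trivialization of $E$ and a fiber metric $\tilde H_{\alpha\beta}$; both are part of our data. Hence Proposition~\ref{prop:gend-comm} transfers verbatim, with $\Lambda X$ replaced by $E$. The elliptic estimates of Section~\ref{sec:elliptic} (Lemmas~\ref{lemma:Dirichlet-form}, \ref{lemma:Dirichlet-form-2}, \ref{lemma:Dt-Dx}) and Proposition~\ref{prop:elliptic} rest on the Dirichlet-form identity coming from \eqref{eq:P-form-general} and on symbolic manipulation; the proofs carry over once we replace $H^1_R$ by $H^1_{\cE}$ and invoke the generalized $J$.

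With the commutator construction and elliptic estimates in place, the proofs of normal propagation (Proposition~\ref{prop:normal-prop}) and tangential propagation (Proposition~\ref{prop:tgt-prop}) go through line by line: they are positive-commutator arguments whose sole interaction with the boundary conditions is through the requirement that the adjoints $(R')^*$ arising in the commutator preserve the relevant subbundles at each $\hsf_j$, which is precisely what hypothesis (3) guarantees via locally scalar commutants. Finally, the Melrose--Sj\"ostrand/Lebeau extraction of \GBB{} given microlocal estimates is a purely geometric argument on $\dot\Sigma$ that does not see the bundle $E$ or the boundary conditions at all. The main obstacle, and essentially the only nontrivial point, is the construction in the general setting of a Riemannian fiber metric $\tilde H$ on $E$ such that the induced $J$ preserves the $\cE$-boundary conditions near each $q_0\in\pa X$, which is delivered by hypothesis (3); everything else is a transcription of the argument already written.
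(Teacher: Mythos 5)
Your overall strategy --- rechecking that each ingredient of Sections~\ref{sec:commutator}--\ref{sec:prop-sing} transfers once hypothesis (3) supplies a good local trivialization --- is exactly how the theorem is intended to be proved; the paper offers no separate argument beyond this, and your treatment of Proposition~\ref{prop:gend-comm}, of the elliptic lemmas, of the two propagation propositions and of the final Melrose--Sj\"ostrand--Lebeau step is correct.

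The flaw is in your first step. For a general bundle $E$ the map $J=H^{-1}\tilde H$ of \eqref{eq:J-def} does not exist: $H$ is a metric on $T^*X$ and induces no pairing on the fibers of $E$, so there are not two fiber inner products to intertwine. More importantly, no $J$ is needed. In the form case $J$ (together with the special choice \eqref{eq:tilde-H-def}) served one purpose only: to pass from the Hodge-theoretic definition $\langle du,dv\rangle_H+\langle\delta u,\delta v\rangle_H$ of $\Box$ to the divergence form of Lemma~\ref{lemma:Box-form} taken with respect to the positive definite fiber metric, and this required $J$ to preserve $H^1_R$ so that $v\mapsto Jv$ was an admissible substitution of test sections. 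In the general setting that passage is precisely hypothesis (6): $P=\nabla^*\nabla+P_1$, with $\nabla^*$ already the Banach space adjoint relative to $H^1_{\cE}$ and the $\tilde H\otimes H$ pairing. Hence the analogue of Lemma~\ref{lemma:Box-form} follows by simply expanding $\nabla$ in the good trivialization (using $\sigma_1(\nabla)=\imath\,\Id\otimes\xi$ and $C_{ij}(0,y)=0$, which concerns only the metric $h$ on $X$), with no substitution $v\mapsto Jv$ at all. What hypothesis (3) actually buys is different, and you should say so explicitly: operators which are scalar in the good trivialization satisfy the analogue of \eqref{eq:preserve-normal-forms} and hence preserve the $\cE$-boundary conditions, which is what the commutant constructions (e.g.\ $\tilde C_r^*$ and $(R')^*$ in Section~\ref{sec:normal}, and the wave front set defined relative to $H^1_{\cE}$) require; it does not ``deliver'' a metric or a $J$. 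Finally, do not replace $\tilde H$ by the Euclidean metric of the trivialization: $\tilde H$ is fixed by the data, and changing it alters $\nabla^*\nabla$ by a genuinely second order, non-scalar term, so $P$ would leave the class \eqref{eq:P-form-general}. This change is also unnecessary, since Propositions~\ref{prop:adjoints} and \ref{prop:gend-comm} hold for an arbitrary smooth positive definite $\tilde H_{\alpha\beta}$.
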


The analogue of Theorem~\ref{thm:prop-sing-restate-neg} also holds.

\def\cprime{$'$} \def\cprime{$'$}

\end{document}